\documentclass[11pt,a4paper]{amsart}
\usepackage{amsmath, amssymb,amsthm}
\usepackage{color, hyperref} 
\usepackage[all]{xy}
\usepackage{bbm}
\usepackage{enumerate,enumitem}
\usepackage{comment}
\usepackage{mathrsfs}
\usepackage{graphicx}
\usepackage{tabularx}
\usepackage{tikz-cd} 
\usepackage{MnSymbol} 

\textheight=615pt
\textwidth=440pt
\oddsidemargin=10pt
\evensidemargin=10pt
\topmargin=0in
\headheight=.1in
\headsep=.5in
\footskip=.75in

\theoremstyle{plain}
\newtheorem{thm}{Theorem}[section]
\newtheorem{lemma}[thm]{Lemma}
\newtheorem{prop}[thm]{Proposition}
\newtheorem{cor}[thm]{Corollary}
\newtheorem{conj}[thm]{Conjecture}

\theoremstyle{definition}

\theoremstyle{remark}
\newtheorem{remark}[thm]{Remark}



\DeclareMathOperator{\Hom}{Hom}

\DeclareMathOperator{\Aut}{Aut}

\DeclareMathOperator{\Gal}{Gal}
\DeclareMathOperator{\ord}{ord}
\DeclareMathOperator{\Char}{Char}

\DeclareMathOperator{\GL}{GL}
\DeclareMathOperator{\Frob}{Frob}

\DeclareMathOperator{\alg}{alg}

\DeclareMathOperator{\Tr}{Tr}

\DeclareMathOperator{\an}{an}

\DeclareMathOperator{\Sel}{Sel}

\DeclareTextCompositeCommand{\"}{OT1}{i}{\"\i}

\newcommand{\Z}{\mathbb Z}
\newcommand{\Q}{\mathbb Q}

\newcommand{\C}{\mathbb C}

\newcommand{\F}{\mathbb F}
\newcommand{\G}{\mathbb G}

\newcommand{\cC}{\mathcal{C}}

\newcommand{\cE}{\mathcal{E}}

\newcommand{\cH}{\mathcal{H}}

\newcommand{\cL}{\mathcal{L}}
\newcommand{\cO}{\mathcal{O}}

\newcommand{\rG}{\mathrm{G}}
\newcommand{\rI}{\mathrm{I}}
\newcommand{\et}{\mathrm{\acute{e}t}}

\newcommand{\gh}{\mathfrak h}
\newcommand{\gp}{\mathfrak p}
\newcommand{\gm}{\mathfrak m}
\newcommand{\gX}{\mathfrak X}




\newcommand{\isom}{\cong}
\newcommand{\embed}{\hookrightarrow}
\newcommand{\surj}{\twoheadrightarrow}

\newcommand{\<}{\langle}   
\renewcommand{\>}{\rangle} 

\newcommand{\ol}{\overline}



\numberwithin{equation}{section}

\begin{document}
\title[Iwasawa invariants of modular forms]
{On Iwasawa invariants of modular forms with reducible and non-$p$-distinguished residual Galois representations}

\author{Sheng-Chi Shih and Jun Wang}
\address{Fakultät für Mathematik, University of Vienna, Oskar-Morgenstern-Platz 1, A-1090 Wien, Austria.}
\email{sheng-chi.shih@univie.ac.at}
\address{Institute for Advanced Study in Mathematics, Harbin Institute of Technology, 150001 Harbin, P. R. China}
\email{jwangmathematics@gmail.com}
\date{\today}

\begin{abstract}
In the present paper, we study the $p$-adic $L$-functions and the (strict) Selmer groups over $\Q_{\infty}$, the cyclotomic $\Z_p$-extension of $\Q$, of the $p$-adic weight one cusp forms $f$, obtained via the $p$-stabilization of weight one Eisenstein series, under the assumption that a certain Eisenstein component of the $p$-ordinary universal cuspidal Hecke algebra is Gorenstein. As an application, we compute the Iwasawa invariants of ordinary modular
forms of weight $k\geq 2$ with the same residual Galois representations as the one of $f$, which in our setting, is reducible and non-$p$-distinguished. Combining this with a result of Kato \cite[Theorem~17.4.2]{kato04}, we prove the Iwasawa main conjecture for these forms. Also, we give numerical examples that satisfy the Gorenstein hypothesis. 

A crucial point for the $p$-adic $L$-functions of $f$ is that under the Gorenstein hypothesis, we are able to define them, following Greenberg--Vatsal, as elements in the one-dimensional Iwasawa algebra  by using Mazur--Kitagawa two-variable $p$-adic $L$-functions and then, to compute them explicitly via local explicit reciprocity law. For the (strict) Selmer groups of $f$, we compute them via the knowledge of the Galois representations of $f$ studied in \cite{BDP}.
\end{abstract}
\maketitle
\addtocontents{toc}{\setcounter{tocdepth}{1}}
\tableofcontents

\section{Introduction}\label{sec:01}
Let $p$ be a prime number, and let $N$ be a positive integer relatively prime to $p$. Let $g=\sum_{n\geq 1} a_n(g)q^n$ be a $p$-ordinary normalized cuspidal eigenform of level $\Gamma_1(Np)$, weight $k\in \Z_{\geq 1}$, and character $\varphi$ of conductor $N$ or $Np$. If $k\geq 2$, the associated $p$-adic $L$-function $\cL(g)$, constructed by Mazur--Tate--Teitelbaum \cite{mazur-tate-teitelbaum}, is an element in the one-dimensional Iwasawa algebra and interpolates special values of the complex $L$-function of $g$. When $k=1$ and $g$ is classical whose associated Galois representation is absolutely irreducible, if there exists a unique cuspidal Hida family passing through $g$, then its $p$-adic $L$-function, also denote by $\cL(g)$, was defined by Greenberg--Vatsal \cite{greenberg-vatsal00} and is again an element in the one-dimensional Iwasawa algebra. 
Recently, many works on the trivial zero conjecture, the Bloch--Kato conjecture, etc, rely on the study of $p$-adic $L$-functions and the Iwasawa main conjecture for modular forms.


\subsection{Iwasawa main conjecture for modular forms}\label{sec:main_conj}
For any field $L$, we denote by $\rG_L:=\Gal(\bar{L}/L)$ the absolute Galois group of $L$.  Let $\omega_p:(\Z/p\Z)^{\times}\to \Z_p^{\times}$ denote the Teichm\"{u}ler character, let $\kappa_p:\Gal(\Q(\zeta_{p^{\infty}})/\Q)\to \Z_p^{\times}$ denote the $p$-adic cyclotomic character, and let $\Q_{\infty}\subset \Q(\zeta_{p^{\infty}})$ denote the cyclotomic $\Z_p$-extension of $\Q$. Let $K$ be a finite field extension of $\Q_p$ containing $a_n(g)$ for all $n\in \Z_{\geq 1}$, and let $\cO$ denote the ring of integers of $K$.

It is known that for any cusp form $g$ as above, there exists a continuous irreducible ordinary Galois representation $\rho_g:\rG_{\Q}\to \Aut_K(V_g)$, where $V_g$ is a $2$-dimensional vector space over $K$, such that $\rho_g$ is unramified outside $Np$, $\det(\rho_g)=\varphi \kappa_p^{k-1}$ and $\Tr(\Frob_l)=a_l(g)$ for all $l\nmid Np$. Let $T_g\subset V_g$ be a $\rG_{\Q}$-stable $\cO$-lattice, and set $A_{g,j}:=(V_g/T_g)\otimes \omega_p^j$ for integers $0\leq j\leq p-1$. It was proved by Greenberg \cite[Proposition~6]{greenberg89} that the Pontryagin dual $\Sel(\Q_{\infty},A_{g,j})^{\vee}$  of the Greenberg's Selmer group $\Sel(\Q_{\infty},A_{g,j})$ of $g$ over $\Q_{\infty}$ (see Section~\ref{sec:def_sel_gp} for the definition) is a finitely generated $\Lambda_{\cO}:=\cO\lsem X\rsem\simeq \cO\lsem 1+p\Z_p\rsem$-module. If it is torsion, then we let $\Char_{\Lambda_{\cO}}\Sel(\Q_{\infty},A_{g,j})^{\vee}\in \Lambda_{\cO}$  denote the characteristic polynomial of $\Sel(\Q_{\infty},A_{g,j})^{\vee}$ over $\Lambda_{\cO}$ (see \eqref{eq:char_poly} for the definition), and let $\cL(g,\omega_p^j)\in \Lambda_{\cO}$ denote the $\omega_p^j$-branch of the $p$-adic $L$-function $\cL(g)\in \cO\lsem \Z_p^{\times}\rsem$ that interpolates the special values $L(g,\omega_p^j,r)$ of the $L$-function of $g$ for $r=1,\ldots,k-1$ if $k\geq 2$; see \eqref{eq:special_value_nontrivial} and \eqref{eq:special_value_trivial}.  Following \cite[Section~3]{greenberg-vatsal00} (for $k\geq 2$) and \cite{greenberg-vatsal20} (for $k=1$), the Iwasawa main conjecture for $(g,j)$ is as follows.

\begin{conj}[Main conjecture]\label{main_conj}
Let the notation be as above. Then, the $\Lambda_{\cO}$-module $\Sel(\Q_{\infty},A_{g,j})^{\vee}$ is torsion, and we have
\begin{equation}\label{eq:main_conj}
\cL(g,\omega_p^j)\cdot \Lambda_{\cO}[1/p]=\Char_{\Lambda_{\cO}[1/p]} \Sel(\Q_{\infty},A_{g,j})^{\vee}.
\end{equation}
\end{conj}

Since we do not assume that the residual representation $\bar\rho_g$ of $\rho_g$ is irreducible, the lattice $T_g$ is not unique in general and hence, we have to invert $p$ in the statement of Conjecture~\ref{main_conj} as different choices of lattices can only change the characteristic polynomial by a power of $p$ \cite[p.~102]{greenberg89} (also see \cite[Section~4.3]{bellaiche-pollack}). 

Conjecture~\ref{main_conj} is known for many cases when $k\geq 2$.
In his seminal work \cite[Theorem~17.4]{kato04}, Kato proved that $\Sel(\Q_{\infty},A_{g,j})^{\vee}$ is torsion and the divisibility
\begin{equation}\label{eq:divisible_kato}
\Char_{\Lambda_{\cO}[1/p]}\Sel(\Q_{\infty},A_{g,j})^{\vee}
\mid \cL(g,\omega_p^j)\cdot \Lambda_{\cO}[1/p].
\end{equation}
If we further assume that the residual representation $\bar\rho_g$ is irreducible and \textit{$p$-distinguished}, the other divisibility of \eqref{eq:divisible_kato} was proved by Skinner--Urban \cite[Theorem~1]{skinner-urban} under some technical assumptions. When the residual representation $\bar\rho_g$ is reducible and $p$-distinguished, some results were proved by Greenberg--Vatsal \cite[(16) and Theorem~3.12]{greenberg-vatsal00} and Bella\"iche--Pollack \cite[Corollary~5.13]{bellaiche-pollack}. In the case of $k=1$ and $\bar{\rho}_g$ being absolutely irreducible, it was proved by Greenberg--Vatsal \cite{greenberg-vatsal20} that $\Sel(\Q_{\infty},A_{g,j})^{\vee}$ is torsion and proved by Maksoud \cite[Theorem~C]{Maksoud} that the divisibility \eqref{eq:divisible_kato} holds if one further assumes that $\bar\rho_g$ is $p$-distinguished and $p$ does not divide the order of the image of $\rho_g$. However, the equality \eqref{eq:main_conj} is still open. Also, it is not clear a priori, but a consequence of \eqref{eq:main_conj}, that the $p$-adic $L$-functions $\cL(g,\omega_p^j)$ are non-trivial; see \cite[Remark~5.5]{greenberg-vatsal20}.

For modular forms of weight $k\geq 1$ whose residual Galois representations are reducible and \textit{non-$p$-distinguished}, the conjectural equality \eqref{eq:main_conj} is still open.

\subsection{Main results}
A main goal of this paper is to study the $p$-adic $L$-functions and the (strict) Selmer groups over $\Q_{\infty}$ of the weight one $p$-adic cusp forms $f$ obtained by the $p$-stabilization of classical weight one Eisenstein series, under a Gorenstein hypothesis on a certain Eisenstein component of the ordinary universal cuspidal Hecke algebra. Another aim is to use results of the first goal to compute the Iwasawa invariants of ordinary modular forms of weight $k\geq 2$ whose residual Galois representations are the same as $\bar\rho_f$ that is reducible and non-$p$-distinguished. As an application, we prove Conjecture~\ref{main_conj} for these forms.

To be more precise, we now assume $p\geq 3$, $Np\geq 5$, and $p\nmid N\phi(N)$, where $\phi$ is the Euler's phi function. Let $\chi$ be an odd primitive Dirichlet character of conductor $N$ with $\chi(p)=1$, set $\theta:=\chi\omega_p$, and set $\cO:=\Z_p[\theta]$ the $\Z_p$-algebra generated by the values of $\theta$. In this setting, the $p$-stabilization of the weight one Eisenstein series 
$$
E_1(\chi,\mathbbm{1}):=2^{-1}L(0,\chi)+\sum_{n=1}^{\infty}(\sum_{d\mid n} \chi(d))q^n
$$ 
is unique, denote by $f$. In their work \cite[Proposition~4.7]{BDP}, Betina--Dimitrov--Pozzi showed that the weight one form $f$ is a $p$-adic cusp form (i.e.~the constant terms of $f$ at varies $p$-ordinary cusps are zero) and studied the geometry of the eigencurve at the weight one point arise from $f$. As a result, they proved that there exists a unique $p$-ordinary cuspidal Hida family passing through $f$; see Theorem~A(i) of \textit{op.~cit}. 

Let $\cE(\theta,\mathbbm{1})$ be the $\Lambda_{\cO}$-adic Eisenstein series with nonzero constant term. We refer the reader to \cite[p.~9]{shih-wang} for the definition of $\Lambda_{\cO}$-adic modular forms and the definition of $\cE(\theta,\mathbbm{1})$. By definition, the weight one specialization of $\cE(\theta,\mathbbm{1})$ is $f$.  Let $\gh_{\theta,\gm}$ denote the localization, at the maximal ideal $\gm$ containing the \textit{Eisenstein ideal} $I_{\theta}$ of $\cE(\theta,\mathbbm{1})$ defined by \eqref{eq:eis_ideal}, of the universal ordinary cuspidal Hecke algebra $\gh_{\theta}$, generated over $\Lambda_{\cO}$ by the adjoin Hecke operators $T^*_{\ell}$ for all $\ell\nmid Np$ and $U^*_q$ for all $q|Np$. The subindex ``$\theta$" indicates that the action of the diamond operators $\<l\>$ is given by $\theta(l)$ for all $l\nmid Np$.

From now on, we assume that the cuspidal Hecke algebra $\gh_{\theta,\gm}$ is Gorenstein; see Section~\ref{sec:examples} for examples satisfying this assumption. We can then define the $p$-adic $L$-function $\cL(f,\omega_p^j)$ of $f$ following Greenberg--Vatsal; see \eqref{eq:def_padic_Lfcn_of_f}.
From the discussion in Section~\ref{sec:main_conj}, one can ask many interesting questions in the study of the Iwasawa theory for modular forms in this setting. For instance,
\begin{itemize}
    \item[(Q1)] can one compute the $p$-adic $L$-function $\cL(f,\omega_p^j)$ of $f$ is without using Conjecture~\ref{main_conj}?
    
    \item[(Q2)] which lattice $T_g$ of $V_g$ can make Conjecture~\ref{main_conj} hold with the same algebraic and analytic $\mu$-invariants?
\end{itemize}
 A novelty of our work is to assert that the answer of the question (Q1) is affirmative by computing $\cL(f,\omega_p^j)$ explicitly via explicit local reciprocity law (Theorem~\ref{12})(1)). This is the first example of the $p$-adic $L$-functions of weight one forms that can be computed explicitly. 
Another novelty of our work is that we find a way to compute the (residual) Selmer groups when the (residual) Galois representations are semisimple and non-$p$-distinguished (\eqref{eq:sel_gp_weigh_one_nontwist} and \eqref{eq:dim_res_sel_gp}). As an application, we prove some cases of Conjecture~\ref{main_conj} and answer the question (Q2) for which the choice of $T_g$ will be addressed in Section~\ref{sec:choice_of_lattice}.

Let $X_{Np^{\infty}}^{(\chi^{-1}\omega_p^j)}$ (resp.~ $\mathfrak{X}_{p^{\infty}}^{(\omega_p^j)}$) denote $\chi^{-1}\omega_p^j$-component (resp.~$\omega_p^j$) of the unramified everywhere (resp.~unramified outside $p$) Iwasawa module defined via \eqref{eq:iwasawa_module}. It was proved by Iwasawa \cite{iwasawa73} that they are finitely generated torsion $\Lambda_{\cO}$-modules if $j\in \Z_{\geq 0}$ is even. In addition, let $u\in 1+p\Z_p$ be a fixed topological generator. The following is our main result for the weight one form $f$.

\begin{thm}\label{12}
Let the notation be as above, and let $0\leq j\leq p-1$ be any even integer. Assume that the cuspidal Hecke algebra $\gh_{\theta,\gm}$ is Gorenstein.
\begin{enumerate}
\item 
One has
\begin{equation}\label{eq:p_adic_L_fcn_wt_one_branch}
\cL(f,\omega_p^j)\sim_{\cO}  
G_{\omega_p^{j}}(X)G_{\chi\omega_p^{1-j}}(u(1+X)^{-1}-1), 
\end{equation}
where $\sim_{\cO}$ means equal up to a unit in $\cO$ and for an even Dirichlet character $\eta$, the formal power series $G_{\eta}(X)$ is the formal power series expression of the Kubota--Leopoldt $p$-adic $L$-function $L_p(s,\eta)$; see \eqref{eq:kubota_leopoldt_power_series}.

\item  The $\Lambda_{\cO}$-module $\Sel(\Q_{\infty},A_{f,j})^{\vee}$ is torsion and one has 
\begin{equation}\label{eq:sel_gp_weigh_one_nontwist}
\Char_{\Lambda_{\cO}}\Sel(\Q_{\infty},A_{f,j})^{\vee} =
\begin{cases}
\Char_{\Lambda_{\cO}} X_{Np^{\infty}}^{(\chi^{-1})} & \mbox{ if } j=0\\
\Char_{\Lambda_{\cO}}\mathfrak{X}_{p^{\infty}}^{(\omega_p^j)}\cdot \Char_{\Lambda_{\cO}} X_{Np^{\infty}}^{(\chi^{-1}\omega_p^j)} & \mbox{ if } j\neq 0.
\end{cases}
\end{equation}

\end{enumerate}
\end{thm}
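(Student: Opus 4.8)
The plan is to prove the two assertions in parallel, each reducing the Iwasawa-theoretic object attached to the weight one $p$-adic form $f$ to classical Kubota--Leopoldt/Iwasawa-module data via the Eisenstein structure of $f$ and the known geometry of the Hida family through $f$ from \cite{BDP}.

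For part (1), the strategy is to realize $\cL(f,\omega_p^j)$ as a specialization of the Mazur--Kitagawa two-variable $p$-adic $L$-function attached to the Hida family through $f$, using the Gorenstein hypothesis to pin down the relevant period (this is where the Greenberg--Vatsal style definition in \eqref{eq:def_padic_Lfcn_of_f} is used: Gorenstein-ness guarantees the congruence module is cyclic, so the two-variable object specializes to a genuine element of $\Lambda_{\cO}$ rather than only up to a $p$-power). First I would recall that since $f$ is the $p$-stabilization of $E_1(\chi,\mathbbm 1)$, along the cyclotomic line the form $f$ ``looks like'' an Eisenstein series, so the two-variable $p$-adic $L$-function degenerates to a product of two Kubota--Leopoldt $p$-adic $L$-functions — one in the variable $s$ coming from the character $\chi$ (on its even branch, i.e.\ the $\omega_p^j$ part), and one in the ``reflected'' variable coming from the complementary character $\chi\omega_p^{1-j}$, the reflection being visible as the substitution $X\mapsto u(1+X)^{-1}-1$ (the functional-equation/reflection symmetry of the cyclotomic $\Z_p$-extension). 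Concretely I would compute the interpolation property: at a classical weight $k\ge 2$ point on the Hida family the value is a product of $L$-values, and by Damerell/factorization of Eisenstein $L$-values this product splits as $L_p(\cdot,\omega_p^j)\,L_p(\cdot,\chi\omega_p^{1-j})$ evaluated at matching cyclotomic points; since both sides of \eqref{eq:p_adic_L_fcn_wt_one_branch} are elements of $\Lambda_{\cO}$ agreeing on a Zariski-dense set of arithmetic points, they agree up to a unit, and chasing normalizations removes the unit. The explicit local reciprocity law enters to identify the period appearing in the Mazur--Kitagawa construction with the one implicit in $G_\eta(X)$; this period comparison is the technical heart of (1).

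For part (2), I would compute $\Sel(\Q_\infty, A_{f,j})$ directly from the Galois representation $\rho_f$, which by \cite{BDP} is reducible: it sits in a non-split extension whose semisimplification is $\mathbbm 1 \oplus \chi^{-1}$ (up to the twist by $\omega_p^j$ and normalization), and crucially non-$p$-distinguished, so the two characters agree mod $p$. The plan is to write down the defining local conditions of Greenberg's Selmer group for $A_{f,j}$, use the filtration of $A_{f,j}$ coming from the reducibility of $\rho_f$ to get a short exact sequence of Selmer groups, and identify the sub/quotient Selmer groups with classical Iwasawa modules: the quotient piece, built from the unramified-outside-$p$ character $\omega_p^j$, contributes $\mathfrak X_{p^\infty}^{(\omega_p^j)}$ (this is where the case $j=0$ is special, since then $\omega_p^j$ is trivial and the corresponding local-at-$p$ and global contributions collapse/cancel, leaving only one factor), and the sub piece, built from $\chi^{-1}\omega_p^j$, contributes $X_{Np^\infty}^{(\chi^{-1}\omega_p^j)}$. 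The extension class being non-zero (again from \cite{BDP}, as $f$ is genuinely cuspidal, not Eisenstein) is what forces the connecting maps in the Selmer sequence to behave so that the characteristic ideal is exactly the \emph{product} — with no extra $\Lambda$-torsion contributed by $\rH^1$ of the extension itself — and non-$p$-distinguishedness is precisely what makes the relevant $\rH^0$'s that would otherwise obstruct this vanish integrally rather than only after inverting $p$, so that the identity holds in $\Lambda_{\cO}$ on the nose.

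The main obstacle I expect is controlling the \emph{integral} structure in both parts simultaneously: in (1), showing that under the Gorenstein hypothesis the Greenberg--Vatsal $p$-adic $L$-function is well-defined in $\Lambda_{\cO}$ and equals the stated product \emph{without} a spurious unit/$p$-power requires a careful period computation via the explicit reciprocity law; in (2), proving that the characteristic ideal is literally the product of the two Iwasawa-module characteristic ideals (rather than a divisor or a multiple by a power of $\gm$) requires showing the relevant cohomological error terms — local $\rH^1$ at $p$, global $\rH^0$, and the failure of the Selmer sequence to be short exact — all vanish, and this is exactly where the hypotheses $p \nmid N\phi(N)$, $\chi(p)=1$, and non-$p$-distinguishedness must be used in a coordinated way. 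A secondary subtlety is handling $j=0$ versus $j\ne 0$ uniformly: when $j=0$ the character $\omega_p^j$ is trivial so the local condition at $p$ degenerates (the ``+1'' eigenspace of Frobenius issue) and one factor of the expected product drops out, matching the case distinction in \eqref{eq:sel_gp_weigh_one_nontwist}; I would treat this by a separate short argument showing $\mathfrak X_{p^\infty}^{(\mathbbm 1)}$ does not contribute, e.g.\ because the corresponding Selmer local condition becomes vacuous or because Leopoldt-type vanishing applies.
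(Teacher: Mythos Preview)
Your proposal has a genuine gap in each part, stemming from a misunderstanding of the structure of $f$ and its Hida family.

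\textbf{Part (1).} Your interpolation argument cannot work as stated: the unique cuspidal Hida family through $f$ (guaranteed by \cite{BDP}) specializes in weights $k\ge 2$ to \emph{cusp forms}, not Eisenstein series, so their complex $L$-values do not factor as products of Dirichlet $L$-values and there is no Damerell-type splitting to invoke. The Eisenstein behaviour is visible only at the single weight-one point. The paper's route is entirely different: it uses the Sharifi map $\varpi_\theta$ and Fukaya--Kato to identify the Mazur--Kitagawa $\cL^+_{\theta,\gm}$ modulo the Eisenstein ideal $I_\theta$ with an explicit cup product of cyclotomic units \eqref{eq:image_MKpadicL_under_varpi}, then shows (Lemma~\ref{cup_prod_modulo_pf}) that after further reduction modulo $\gp_f$ this lands in the local piece $S'_\chi(1)$, where it is computed value-by-value via the explicit reciprocity law (Proposition~\ref{local_cup_prod}). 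The reciprocity law is thus not a period comparison but the actual engine producing the product $L_p(1,\psi)L_p(0,\chi\omega_p\bar\psi)$.

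\textbf{Part (2).} You assume $\rho_f$ is a non-split extension and plan to run a filtration/connecting-map argument. In fact, for the lattice $T_f$ chosen in Section~\ref{sec:choice_of_lattice}, the global representation $\rho_f$ is \emph{semisimple} (this is Proposition~\ref{residual_repr_is_semisimple}, relying on \cite{shih-wang}): under the basis $\{e_-,e_+\}$ one has $\rho_f=\mathbbm 1\oplus\chi^{-1}$. The non-$p$-distinguished subtlety is that the $\rG_{\Q_p}$-stable line $A'_{f,j}$ in the ordinary filtration is generated by the \emph{diagonal} vector $e_-+e_+$, not by either global summand. So the Selmer computation is not a d\'evissage via a non-split global sequence; rather, one writes classes as pairs $(x,y)\in H^1(\rG_{\Q_\infty,\Sigma_p},\Omega(\omega_p^j))\oplus H^1(\rG_{\Q_\infty,\Sigma_{Np}},\Omega(\chi^{-1}\omega_p^j))$ subject to the diagonal local condition $(x-y)|_{\rI_p}=0$. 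For $j=0$ the first factor dies since $\mathfrak X_{p^\infty}^\Delta=0$; for even $j\ne 0$ the key step (Proposition~\ref{computation_sel_with_trist}) is an Iwasawa-theoretic argument showing that for each $x$ there is a \emph{unique} class $\bar y$ modulo $\Hom(X_{Np^\infty}^{(\chi^{-1}\omega_p^j)},\Omega)$ matching it on $\rI_p$, which yields the product of characteristic ideals directly. Your connecting-map heuristic does not apply because there is no non-trivial global extension class to exploit.
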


\begin{remark}
It was proved in \cite[Theorem~A(i)]{BDP} that the localization of $\gh_{\theta,\gm}$ at the height one prime $\gp_f$ corresponding to $f$ is Gorenstein. Therefore, for even integers $j$, we know $\cL(f,\omega_p^j)$ is non-zero without any assumption by Theorem~\ref{12}(1). 
\end{remark}

A direct consequence of Theorem~\ref{12} is that when $\chi$ is a quadratic character, the weight one Eisenstein series $E_1(\chi,\mathbbm{1})$ coincides with the CM theta series $\theta_{\mathbbm{1}}$ attached to the trivial character. Then, \eqref{eq:p_adic_L_fcn_wt_one_branch} reads that the $p$-adic $L$-function $\cL(f,\omega^0)$ coincides, up to a unit in $\cO$, with the trivial character branch of the cyclotomic Katz $p$-adic $L$-function (Corollary~\ref{katz_p_adic_L}).

Let $\mu^{\an}(g,j)$, $\lambda^{\an}(g,j)$, $\mu^{\alg}(A_{g,j})$, and $\lambda^{\alg}(A_{g,j})$ be analytic and algebraic Iwasawa invariants defined via \eqref{eq:def_an_inv} and \eqref{eq:def_alg_inv}, respectively. The following is our main result for weight $k\geq 2$ cusp forms $g$.

\begin{thm}\label{inv_higher_wt}
Let $g$ be an ordinary cusp form of weight $k\geq 2$ passing through by a cuspidal family corresponding to a minimal prime ideal of $\gh_{\theta,\gm}$, and let $T_{g,j}$ be the lattice defined in Section~\ref{sec:choice_of_lattice}. Also, let other notation and the assumption be as in Theorem~\ref{12}. Then, one has $\mu^{\an}(g,j)=0$ and
\begin{equation}\label{eq:an_inv_higher_wt}
\lambda^{an}(g,j)=\lambda(G_{\omega_p^j}(X))+\lambda(G_{\chi\omega_p^{1-j}}(u(1+X)^{-1}-1)).
\end{equation}
Moreover, one has $\mu^{\alg}(A_{g,j})=0$ and
\begin{equation}\label{eq:inv_higher_wt}
\lambda^{\alg}(A_{g,j})=
\begin{cases}
\lambda(\Char_{\Lambda_{\cO}}X_{Np^{\infty}}^{(\chi^{-1})})-1 & \mbox{ if }j=0\\
\lambda(\Char_{\Lambda_{\cO}}\mathfrak{X}_{p^{\infty}}^{(\omega_p^j)})+\lambda(\Char_{\Lambda_{\cO}} X_{Np^{\infty}}^{(\chi^{-1}\omega_p^{j})})& \mbox{ if } j\neq 0.
\end{cases}
\end{equation}
Here, the $\lambda$-invariant of a formal power series in $\Lambda_{\cO}$ is defined by \eqref{eq:invarariant_power_series}.
\end{thm}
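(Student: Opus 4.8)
The plan is to deduce Theorem~\ref{inv_higher_wt} from Theorem~\ref{12} by two transfer steps: first passing from the explicit power series and Iwasawa-module descriptions of Theorem~\ref{12} to the invariants of $f$ itself, and then moving from $f$ to an arbitrary weight-$k$ arithmetic specialization $g$ that is congruent to $f$ modulo the maximal ideal of $\gh_{\theta,\gm}$. For the invariants of $f$: on the analytic side, each power series $G_{\eta}(X)$ attached to the Kubota--Leopoldt $p$-adic $L$-function of an even Dirichlet character has vanishing $\mu$-invariant by the Ferrero--Washington theorem, and the substitution $X\mapsto u(1+X)^{-1}-1$ appearing in \eqref{eq:p_adic_L_fcn_wt_one_branch} is a continuous $\cO$-algebra automorphism of $\Lambda_{\cO}$ (the involution $1+X\mapsto u(1+X)^{-1}$), hence fixes the maximal ideal and preserves both the $\mu$- and the $\lambda$-invariant; so Theorem~\ref{12}(1) gives $\mu^{\an}(f,j)=0$ and $\lambda^{\an}(f,j)=\lambda(G_{\omega_p^{j}}(X))+\lambda(G_{\chi\omega_p^{1-j}}(X))$. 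On the algebraic side, the Iwasawa modules $X_{Np^{\infty}}^{(\chi^{-1}\omega_p^{j})}$ and $\mathfrak{X}_{p^{\infty}}^{(\omega_p^{j})}$ have vanishing $\mu$-invariant by Ferrero--Washington (equivalently, by the Iwasawa main conjecture over $\Q$), so Theorem~\ref{12}(2) gives $\mu^{\alg}(A_{f,j})=0$ and $\lambda^{\alg}(A_{f,j})$ equal to the right-hand side of \eqref{eq:inv_higher_wt} with $g$ replaced by $f$.

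For the analytic transfer I would use that, under the Gorenstein hypothesis, the branch of the Mazur--Kitagawa two-variable $p$-adic $L$-function used in \eqref{eq:def_padic_Lfcn_of_f} is an element of $\gh_{\theta,\gm}\lsem X\rsem$ whose arithmetic specializations recover the $\omega_p^{j}$-branches of the $p$-adic $L$-functions of the corresponding forms, in particular $\cL(f,\omega_p^j)$ at the weight-one point and $\cL(g,\omega_p^j)$ at the arithmetic point of $g$. Since $\gh_{\theta,\gm}$ is local, these two specialization maps reduce modulo the maximal ideal $\gm$ to the same surjection onto the residue field, so $\cL(f,\omega_p^j)$ and $\cL(g,\omega_p^j)$ have equal image in $(\gh_{\theta,\gm}/\gm)\lsem X\rsem$; this common image is nonzero because $\mu^{\an}(f,j)=0$, and therefore $\mu^{\an}(g,j)=0$ and $\lambda^{\an}(g,j)=\lambda^{\an}(f,j)$, which is \eqref{eq:an_inv_higher_wt}.

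For the algebraic transfer I would exploit that $\bar\rho_g\simeq\bar\rho_f$ and that the lattice $T_{g,j}$ of Section~\ref{sec:choice_of_lattice} is chosen so that $A_{g,j}[\varpi]$, together with its Greenberg local condition at $p$, is isomorphic to $A_{f,j}[\varpi]$ with its local condition: the ordinary filtration of $A_{g,j}$ has inertia at $p$ acting through the reduction of $\varphi_g\kappa_p^{k-1}\omega_p^{j}$, which equals $\bar\chi\omega_p^{j}$, the same character as for $f$, while the local conditions at the primes dividing $N$ depend only on $\bar\rho$. Hence the residual (strict) Selmer group $\Sel(\Q_{\infty},A_{g,j}[\varpi])$ is computed by the same recipe as in \eqref{eq:dim_res_sel_gp}, and is in particular finite; finiteness forces $\mu^{\alg}(A_{g,j})=0$, after which $\lambda^{\alg}(A_{g,j})=\mathrm{corank}_{\cO}\Sel(\Q_{\infty},A_{g,j})$ is recovered from $\dim_{\F}\Sel(\Q_{\infty},A_{g,j}[\varpi])$ together with $\rH^{0}$- and local correction terms that depend only on the residual data. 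All of these inputs agree with those for $f$, so $\lambda^{\alg}(A_{g,j})=\lambda^{\alg}(A_{f,j})$, which is \eqref{eq:inv_higher_wt}; combined with Kato's divisibility \eqref{eq:divisible_kato} this also yields Conjecture~\ref{main_conj} for $(g,j)$.

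The step I expect to be the main obstacle is the algebraic transfer. Because $\bar\rho_f$ is reducible and \emph{non-$p$-distinguished}, multiplicity one fails for the relevant Hecke module and the variation-of-Iwasawa-invariants machinery of Emerton--Pollack--Weston is not available; one is thus forced to carry out the residual Selmer group computation unconditionally, using the explicit knowledge of the Galois representations from \cite{BDP}, and to verify directly that the $\lambda$-correction terms do not depend on the weight---this is exactly where the careful choice of the lattice $T_{g,j}$ in Section~\ref{sec:choice_of_lattice} enters. The analytic transfer is comparatively soft, its only genuine input being the weight-one interpolation property---hence the integrality---of the two-variable $p$-adic $L$-function, which relies on the Gorenstein hypothesis.
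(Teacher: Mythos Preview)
Your analytic transfer is exactly the paper's argument (Corollary~\ref{ana_inv_(g,j)}): the paper also reads $\cL(f,\omega_p^j)$ and $\cL(g,\omega_p^j)$ as two specializations of $\cL_{\theta,\gm}(\omega_p^j)\in\gh_{\theta,\gm}\lsem X\rsem$, observes that $\mu^{\an}(f,j)=0$ forces the ideal of coefficients to be the unit ideal, and then invokes \cite[Proposition~3.7.3 and the remark after Definition~3.7.6]{emerton-pollack-weston} to propagate $\mu=0$ and the $\lambda$-value to every arithmetic specialization. Your direct ``same reduction modulo $\gm$'' phrasing is equivalent.

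On the algebraic side your overall shape matches the paper's three-step program (Proposition~\ref{sel_no_fin_submod}, Proposition~\ref{SES1}, and the explicit residual computation in Propositions~\ref{res_sel_gp_trivial_twist} and \ref{computation_sel_with_trist}), but the paper does \emph{not} transfer from $f$ to $g$: it computes $\Sel(\Q_{\infty},A_{g,j}[\pi])$ directly for $g$ and then applies the $g$-specific correction from \eqref{eq:residal_sel_and_usual_selmer}. Your identification $\Sel(\Q_{\infty},A_{g,j}[\pi])\simeq\Sel(\Q_{\infty},A_{f,j}[\pi])$ is correct, though your justification ``same inertial character'' is not enough in the non-$p$-distinguished situation (every line of $A_{g,0}[\pi]$ carries that character); what actually pins down the ordinary line is Proposition~\ref{residual_repr_is_semisimple}, which identifies $A'_{g,j}[\pi]=\langle e_-+e_+\rangle$ for all specializations, including $f$.

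There is, however, a genuine gap in your claim that the $\rH^0$-correction ``depends only on the residual data''. It does not. For $j=0$ one has $H^0(\Q_{\infty},A_{g,0}[\pi])\simeq\F$ in both cases, but $H^0(\Q_{\infty},A_{f,0})\simeq K/\cO$ is divisible (since $\rho_f$ is globally split), so $H^0(\Q_{\infty},A_{f,0})/\pi=0$; whereas for $g$ of weight $\geq 2$ the representation $\rho_g$ is irreducible, and the paper uses an adaptation of \cite[Theorem~3]{ribet81} to show $H^0(\Q_{\infty},A_{g,0})$ is finite, whence $\dim_{\F}H^0(\Q_{\infty},A_{g,0})/\pi=1$ (Proposition~\ref{SES1}). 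Thus the passage from $\dim_{\F}\Sel(\Q_{\infty},A_{\bullet,0}[\pi])$ to $\lambda^{\alg}(A_{\bullet,0})$ is \emph{different} for $f$ and for $g$, and your sentence ``All of these inputs agree with those for $f$'' is false at $j=0$. The paper circumvents this by never comparing to $f$: it computes $\dim_{\F}\Sel(\Q_{\infty},A_{g,0}[\pi])$ and $\dim_{\F}H^0(\Q_{\infty},A_{g,0})/\pi$ separately for $g$, and reads off $\lambda^{\alg}(A_{g,0})$ from Corollary~\ref{induction_to_residual_sel}. For $j\neq 0$ the residual $H^0$ already vanishes and your transfer goes through without this issue.
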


Recall that the Iwasawa main conjecture for abelian extensions of $\Q$, proved by Mazur--Wiles \cite{mazur-wiles},  asserts that 
\begin{equation}\label{eq:classical_main_conj}
\Char_{\Lambda_{\cO}}(X_{Np^{\infty}}^{(\chi^{-1}\omega_p^j)})=(G_{\chi\omega_p^{1-j}}(u(1+X)^{-1}-1))
\mbox{ and }
\Char_{\Lambda_{\cO}}(\gX_{p^{\infty}}^{(\omega_p^j)})=(G_{\chi^{j}\omega_p}(X)).
\end{equation} 
Thus, by \eqref{eq:divisible_kato}, Theorems~\ref{12} and \ref{inv_higher_wt} would yield the following corollary.

\begin{cor}\label{main_thm}
Let the notation and the assumption be as in Theorem~\ref{inv_higher_wt}. Let $0\leq j\leq p-1$ be any even integer. Then, Conjecture~\ref{main_conj} holds for $(g,j)$. Also, Conjecture~\ref{main_conj} holds for $(f,j)$ if $j\neq 0$. 
\end{cor}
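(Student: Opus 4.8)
The plan is to obtain Corollary~\ref{main_thm} as a formal combination of Kato's divisibility~\eqref{eq:divisible_kato}, Theorems~\ref{12} and~\ref{inv_higher_wt}, and the Mazur--Wiles identities~\eqref{eq:classical_main_conj}. The only elementary ingredient needed is the remark that if $h_1,h_2\in\Lambda_{\cO}$ are nonzero with $h_1\mid h_2$ in $\Lambda_{\cO}[1/p]$ and $\lambda(h_1)=\lambda(h_2)$, then $h_1\Lambda_{\cO}[1/p]=h_2\Lambda_{\cO}[1/p]$: writing $h_2=h_1 h_3$ with $h_3\in\Lambda_{\cO}[1/p]$, clearing the power of $p$ in the denominator and applying the Weierstrass preparation theorem shows that the distinguished-polynomial part of $h_3$ has degree $0$, hence $h_3\in\Lambda_{\cO}[1/p]^{\times}$. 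Consequently, for both assertions of the corollary it suffices to match the two sides of Conjecture~\ref{main_conj} through their $\lambda$-invariants; the vanishing of the $\mu$-invariants is part of Theorem~\ref{inv_higher_wt} and becomes irrelevant once $p$ is inverted.

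For a form $g$ of weight $k\geq 2$ as in Theorem~\ref{inv_higher_wt}, Kato's theorem~\eqref{eq:divisible_kato} supplies the divisibility $\Char_{\Lambda_{\cO}[1/p]}\Sel(\Q_{\infty},A_{g,j})^{\vee}\mid\cL(g,\omega_p^j)\cdot\Lambda_{\cO}[1/p]$, so by the remark above it remains to prove $\lambda^{\alg}(A_{g,j})=\lambda^{\an}(g,j)$. I would compute the left-hand side from~\eqref{eq:inv_higher_wt} by substituting in the Mazur--Wiles identities~\eqref{eq:classical_main_conj}: the map $X\mapsto u(1+X)^{-1}-1$ is a ring automorphism of $\Lambda_{\cO}$, hence preserves $\lambda$-invariants, so $\lambda\bigl(\Char_{\Lambda_{\cO}}X_{Np^{\infty}}^{(\chi^{-1}\omega_p^j)}\bigr)=\lambda\bigl(G_{\chi\omega_p^{1-j}}(X)\bigr)$, and~\eqref{eq:classical_main_conj} likewise evaluates $\lambda\bigl(\Char_{\Lambda_{\cO}}\mathfrak{X}_{p^{\infty}}^{(\omega_p^j)}\bigr)$ in terms of the relevant Kubota--Leopoldt power series. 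Feeding these into~\eqref{eq:inv_higher_wt} and comparing with~\eqref{eq:an_inv_higher_wt} yields $\lambda^{\alg}(A_{g,j})=\lambda^{\an}(g,j)$ for $j\neq 0$; for $j=0$ the same substitution gives $\lambda^{\alg}(A_{g,0})=\lambda\bigl(G_{\chi\omega_p}(X)\bigr)$, and one then invokes the classical evaluation of the $\mathbbm{1}$-branch $G_{\omega_p^0}(X)$ of the cyclotomic $p$-adic $L$-function to see that the extra term on the analytic side of~\eqref{eq:an_inv_higher_wt} is accounted for. Hence the two sides of Conjecture~\ref{main_conj} for $(g,j)$ generate the same ideal of $\Lambda_{\cO}[1/p]$.

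For the weight one form $f$ and $j\neq 0$ the argument is more direct and bypasses Kato: by Theorem~\ref{12}(1), $\cL(f,\omega_p^j)\cdot\Lambda_{\cO}$ is generated by $G_{\omega_p^{j}}(X)\,G_{\chi\omega_p^{1-j}}(u(1+X)^{-1}-1)$, and applying~\eqref{eq:classical_main_conj} rewrites this product as $\Char_{\Lambda_{\cO}}\mathfrak{X}_{p^{\infty}}^{(\omega_p^j)}\cdot\Char_{\Lambda_{\cO}}X_{Np^{\infty}}^{(\chi^{-1}\omega_p^j)}$, which by Theorem~\ref{12}(2) (case $j\neq 0$) is exactly $\Char_{\Lambda_{\cO}}\Sel(\Q_{\infty},A_{f,j})^{\vee}$; extending scalars to $\Lambda_{\cO}[1/p]$ gives Conjecture~\ref{main_conj} for $(f,j)$, in fact an integral equality of ideals in $\Lambda_{\cO}$. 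The genuinely delicate point throughout is the bookkeeping of $p$-powers and of the trivial zero at $j=0$: because $\bar\rho_g$ is reducible the lattice $T_{g,j}$ and the period defining $\cL(g)$ are not canonical, so for $k\geq 2$ the statement is intrinsically one about ideals of $\Lambda_{\cO}[1/p]$, and it is the vanishing of the four $\mu$-invariants in Theorem~\ref{inv_higher_wt}, together with the $\lambda$-comparison above, that certifies that up to these unavoidable ambiguities the two characteristic power series agree on the nose; the case $j=0$ is left aside for $f$ because of the pole at $s=1$ of the trivial-character $p$-adic $L$-function.
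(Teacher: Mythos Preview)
Your proof is correct and follows exactly the route the paper indicates: combine Kato's divisibility \eqref{eq:divisible_kato} with the $\lambda$-invariant computations of Theorems~\ref{12} and~\ref{inv_higher_wt}, then invoke Mazur--Wiles \eqref{eq:classical_main_conj} to match the two sides. Your observation that for $(f,j)$ with $j\neq 0$ one bypasses Kato entirely and obtains an \emph{integral} equality of ideals in $\Lambda_{\cO}$ is a nice sharpening, and your remark that the $j=0$ case for $g$ requires separately handling the pole of $G_{\omega_p^0}(X)$ against the trivial zero of $G_{\chi\omega_p}$ is the one genuine subtlety the paper's one-line deduction glosses over.
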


To end this subsection, we make a remark when $j$ is an odd integer.

\begin{remark}
Let $j\leq p-1$ be a positive odd integer.
We show in Proposition~\ref{minus_padic_L_f} that $\cL(f,\omega_p^j)=0$, and hence, one has $\mu^{\an}(g,j)\neq 0$; see Corollary~\ref{ana_inv_(g,j)}. In addition, by the same argument of the proof of \eqref{eq:inv_higher_wt}, one can show that $\mu^{\alg}(A_{g,j})\neq 0$; see Remark~\ref{j_odd}. However, we can not say anything about the $\lambda$-invariants  $\lambda^{\an}(g,j)$ and $\lambda^{\alg}(A_{g,j})$.
\end{remark}

\subsection{Idea of the proof}\label{sec:idea}
We now explain the idea of the proof of Theorems~\ref{12} . We first explain the proof of \eqref{eq:p_adic_L_fcn_wt_one_branch} presented in Section~\ref{sec:p_adic_L}. By the works of Sharifi \cite{Sha} and Fukaya--Kato \cite{FK}, we know that the plus part $\cL^+_{\theta,\gm}$ of the Mazur--Kitagawa two-variable $p$-adic $L$-function  modulo the Eisenstein ideal $I_{\theta}$ coincides with the cup product of cyclotomic units \eqref{eq:image_MKpadicL_under_varpi}. We then show in Lemma~\ref{cup_prod_modulo_pf} that if one further modulo the height one prime $\gp_f$ of $\gh_{\theta,\gm}$ corresponding to $f$, the cup product \eqref{eq:image_MKpadicL_under_varpi} is indeed in the local Galois cohomology and hence, can be computed explicitly via local explicit law. The computation will be presented in Section~\ref{sec:local_cup_prod}. 

We next explain the proof of \eqref{eq:sel_gp_weigh_one_nontwist} presented in Section~\ref{sec:alg_inv}. By \cite[Theorem~1.5]{shih-wang}, the action of $\rG_{\Q}$ on the lattice $T_f$ chosen in Section~\ref{sec:choice_of_lattice} is semisimple. We can then compute  $\Sel(\Q_{\infty},A_{f,j})$ by writing elements as  
$$
(x,y)\in H^1(\rG_{\Q_{\infty},\Sigma_p},K/\cO(\omega_p^j)) \oplus H^1(\rG_{\Q_{\infty},\Sigma_{Np}},K/\cO(\chi\omega_p^j))
$$
satisfying some local conditions. When $j=0$, the computation is straight forward, since $x$ is always zero by Iwasawa theory. When $j\neq 0$, the computation is more complicated as both $x$ and $y$ could be nonzero and hence, we have to study the Selmer conditions on $x$ and $y$. A crucial point is that one knows the basis of the $\G_{\Q_p}$-stable line in the ordinary filtration \eqref{eq:ord_cond} (Proposition~\ref{residual_repr_is_semisimple}). By using this, we see the condition on $x$ and $y$ \eqref{eq:decom_res_sel_gp} and then further prove \eqref{eq:sel_gp_weigh_one_nontwist}.

Finally, we explain the idea of the proof of Theorem~\ref{inv_higher_wt}. The assertion on the
analytic invariants follows from \eqref{eq:p_adic_L_fcn_wt_one_branch} (Corollary~\ref{ana_inv_(g,j)}). Our idea to prove \eqref{eq:inv_higher_wt} is inspired by the work of Emerton--Pollack--Weston \cite{emerton-pollack-weston} generalizing the work of Greenberg--Vatsal \cite{greenberg-vatsal00}. 
The argument will reduce to compute the residual Selmer groups $\Sel(\Q_{\infty},A_{g,j}[\pi])$ defined in Section~\ref{sec:res_sel_gp}. Here $\pi$ is a uniformizer of the Hecke ring of $g$. To this end, there are three steps. 
\begin{enumerate}
\item Show that the Iwasawa module $\Sel(\Q_{\infty},A_{g,j})^{\vee}$ does not have any finite submodules (Proposition~\ref{sel_no_fin_submod}).  As a consequence (Corollary~\ref{induction_to_residual_sel}), we know that $\mu^{\alg}(A_{g,j})=0$ if and only if $\Sel(\Q_{\infty},A_{g,j})[\pi]$ is finite, and if this is the case, $\lambda^{\alg}(g,j)$ coincides with $\dim_{\F} \Sel(\Q_{\infty},A_{g,j})[\pi]$. Here $\F$ is the residue field of the Hecke ring of $g$.

\item Find the explicit relationship between $\Sel(\Q_{\infty},A_{g,j})[\pi]$ and $\Sel(\Q_{\infty},A_{g,j}[\pi])$ (Proposition~\ref{eq:residal_sel_and_usual_selmer}). 

\item Compute the residue Selmer group $\Sel(\Q_{\infty},A_{g,j}[\pi])$ explicitly (Section~\ref{sec:alg_inv}).
\end{enumerate}
The result in step (3) is computed via a similar computation of $\Sel(\Q_{\infty},A_{f,j})$ and can be stated as follows
\begin{equation}\label{eq:dim_res_sel_gp}
\dim_{\F}\Sel(\Q_{\infty},A_{g,j}[\pi])
=
\begin{cases}
\dim_{\F} \Hom(X_{Np^{\infty}}^{(\chi^{-1})},\F) & \mbox{ if } j=0 \\
\dim_{\F} \Hom(\gX_{p^{\infty}}^{(\omega_p^j)},\F)+\dim_{\F} \Hom(X_{Np^{\infty}}^{(\chi^{-1}\omega_p^j)},\F) & \mbox{ if } j\neq 0.
\end{cases}
\end{equation}

\subsection{Outline}
Section~\ref{sec:p_adic_L} focuses on the $p$-adic $L$-functions of modular forms. We will briefly review the interpolation property of the $p$-adic $L$-functions of ordinary modular forms of weight $k\geq 2$ and define the $p$-adic $L$-function of the weight one form $f$ under the Gorenstein hypothesis. Main results will state in Section~\ref{sec:p_adic_L_of_f} and the proof will be given in Section~\ref{sec:local_cup_prod}.
In Section~\ref{sec:sel_gp}, we review the definition of Greenberg's Selmer group and collect its properties used for the reduction explained in Section~\ref{sec:idea}. Also, we explain the choice of lattice $T_g$ and study the associated residual representation. Section~\ref{sec:alg_inv} is devoted to computing (residual) Selmer groups .
In Section~\ref{sec:examples}, we give examples of $(N,p)$ for which the cuspidal Hecke algebra $\gh_{\theta,\gm}$ is Gorenstein when $\chi$ is a quadratic character.

\subsection*{Acknowledgments}
The first authors (S.S.) would like to thank A.~Betina, M.~ Dimitrov, A.~Maksoud, and C.~Wang-Erickson for stimulating discussions and helpful comments on the earlier version of the manuscript. 

Shih was supported by Austrian Science Fund (FWF) under Grant No.~START-Prize Y966. Wang was partially supported by Morningside Center of Mathematics in his postdoctoral studies.

\section{Analytic Iwasawa invariants}\label{sec:p_adic_L}
Let the notation be as in the Introduction. Throughout this paper, we fix, once and for all, embeddings $\iota_p:\bar{\Q}\embed \bar{\Q}_p$ and $\iota_{\infty}:\bar{\Q}\embed \C$. We can then consider modular forms over $\bar{\Q}_p$ via $\iota_p$. 

The aim of this section is to prove Theorems~\ref{12}(i) and \eqref{eq:inv_higher_wt}. We will first review in Section~\ref{sec:p_adic_L_fcn_higher_wt} the interpolation properties of the $p$-adic $L$-functions of ordinary modular forms of weight $k\geq 2$ following \cite[Section~4.5.1]{FK}. We then explain in Section~\ref{sec:MK_p_adic_L} how one can recover these $p$-adic $L$-functions and how to define the $p$-adic $L$-function of $f$ from Mazur-Kitagawa two-variable $p$-adic $L$-functions under the Gorenstein hypothesis. The main result is stated in Section~\ref{sec:p_adic_L_of_f} and the proof is presented in Section~\ref{sec:local_cup_prod}.


\subsection{$p$-adic $L$-functions for ordinary cusp forms of weight $k\geq 2$}\label{sec:p_adic_L_fcn_higher_wt}
Let $g=\sum_{n\geq 1} a_n(g) q^n$ be a $p$-ordinary normalized eigenform of weight $k\geq 2$ in $S_k(\Gamma_1(Np),\varphi,\bar\Q_p)$ , where $\varphi$ is a primitive character of conductor $N$ or $Np$. In case of the conductor of $\varphi$ being $N$, $g$ is the ordinary $p$-stabilization of a modular form of level $N$ and character $\varphi$. Let $K_g:=\Q_p(a_n(g)\mid n\in \Z_{\geq 1})$ be a finite field extension of $\Q_p$, and let $\cO_g$ denote the ring of integers of $K_g$.

Let $\Omega_g^{\pm}$ be a period of $g$. The $p$-adic $L$-function of $g$ with respect to the period $\Omega_g^{\pm}$, denoted by $\cL(g)$, is the unique element  $\in \cO_g\lsem \Z_p^{\times} \rsem$ satisfying the following relation. For all $r=1,\ldots k-1$ and for a primitive Dirichlet character $\psi:(\Z/p^n\Z)^{\times}\to \bar\Q_p$, let $\cL(g,\psi,r)\in \bar\Q_p$ be the image of $\cL(g)$ under the map $\psi\kappa^{r-1}:\cO_g\lsem \Z_p^{\times} \rsem \to \bar\Q_p$ sending the group element $[a]$ for $a\in \Z_p^{\times}$ to $a^{r-1}\psi(a)$. If $\psi$ is non-trivial, one has
\begin{equation}\label{eq:special_value_nontrivial}
    \cL(g,\psi,r)=(r-1)! p^{n(r-1)} a_p(g)^{-n} \tau(\psi) (-2\pi i )^{1-r}\tfrac{L(f,\psi,r)}{\Omega^{\pm}}\in \bar\Q. 
\end{equation}    
    Here, $\pm=(-1)^{r-1}\psi(-1)$ and $\tau(\psi)$ is the Gauss sum of $\psi$. If $\psi$ is a trivial character, then one has
\begin{equation}\label{eq:special_value_trivial}
    \cL(f,\mathbbm{1},r)=(r-1)!(1-p^{r-1}a_p(g)^{-1})(1-\mu(p)p^{r-1}a_p(g)^{-1})(-2\pi i )^{1-r} \tfrac{L(f,r)}{\Omega^{\pm}}\in \bar\Q.
\end{equation}
Here $L(f,\psi,r)$ and $L(f,r)$ are the complex $L$-functions. 
For $j=0,\ldots,p-1$, denote by $\cL(g,\omega_p^j)\in \Lambda_{\cO_g}$ the $\omega_p^j$-branch of $\cL(g)$. 

\subsection{Mazur--Kitagawa two-variable $p$-adic $L$-functions}\label{sec:MK_p_adic_L}
Let the notation be as in the previous subsection. Set $H:=\varprojlim_r H^1_{\et}(X_1(Np^r)_{/\ol{\Q}},\Z_p)^{\ord}$, where the superscript ``\textit{ord}" means the ordinary part for the dual Hecke operator $U^*_p$. Let $\gh:=\Z_p[T^*_{n}\mid n\geq 1]$ be the universal ordinary Hecke algebra acting on $H$. One can identify $H^1_{\et}(X_1(Np^r)_{/\ol{\Q}},\Z_p)$ with a certain homology group by Poincar\'{e} duality and the comparison between Betti (co)homology groups and \'{e}tale (co)homology groups. Namely, one has an isomorphism of $\Z_p[\rG_{\Q}]$-modules
\begin{equation}\label{eq:coho_relative_homo}
H^1_{\et}(X_1(Np^r)_{/\ol{\Q}},\Z_p)(1) \isom H_1(X_1(Np^r)(\C),\Z_p).
\end{equation}
We will view modular symbols $\{\alpha,\beta\}_{r\geq 1}\in \varprojlim_r H_1(X_1(Np^r)(\C),\Z_p)$ as elements of $H$ via the isomorphism \eqref{eq:coho_relative_homo} without any further notice.

In \cite{kitagawa}, Kitagawa constructed Mazur--Kitagawa two-variable $p$-adic $L$-functions by varying the $p$-adic $L$-functions $\cL(g)$ $p$-adic analytically in $k$. 
Following \cite[\S 4.4.2]{FK}, the Mazur--Kitagawa two-variable $p$-adic $L$-function $\cL$ is defined as
$$
\mathcal{L}:=(\sum_{a\in(\Z/p^r\Z)^{\times}}U^*(p)^{-r}\{\infty,a/p^r\}\otimes[a])_{r\geq 1}=(\sum_{a\in(\Z/p^r\Z)^{\times}}U^*(p)^{-r}[Na:1]_r\otimes[a])_{r\geq 1}.
$$
By \cite[Lemma~3.1]{Sha}, it is an element in $H\lsem\Z_p^{\times}\rsem$ since $([Na:1]_r)_{r\geq 1}$ is in $H$.  For a Dirichlet character $\eta$ of conductor $N$ or $Np$, we denote by $H_{\eta}$ the $\eta$-component of $H$ on which diamond operators act vie the character $\eta$, and denote by $\cL_{\eta}$ the image of $\cL$ in $H_{\eta}$. Also, for a maximal ideal $\gm$ of the cuspidal algebra $\gh_{\eta}$ acting on $H_{\eta}$, we denote by $H_{\eta,\gm}$ the localization of $H_{\eta}$ at $\gm$, and denote by $\cL_{\eta,\gm}$ the image of $\cL_{\eta}$ in $H_{\eta,\gm}$. Moreover, we denote by $\cL_{\eta,\gm}(\omega_p^j)\in H_{\eta,\gm}\lsem 1+p\Z_p \rsem\simeq H_{\eta,\gm}\lsem X \rsem$ the $\omega_p^j$-branch of $\cL_{\eta,\gm}$.


If the cuspidal Hecke algebra $\gh_{\eta,\gm}$ is Gorenstein, the cohomology $H_{\eta,\gm}$ is free $\gh_{\eta,\gm}$- module of rank two and hence, one can view $\cL_{\eta,\gm}$ as an element in $\gh_{\eta,\gm}\lsem\Z_p^{\times}\rsem$ via an isomorphism
\begin{equation}\label{eq:isom_H_h}
H_{\eta,\gm}\simeq \gh_{\eta,\gm}^2.
\end{equation}
If this the case, we fix, once and for all, such an isomorphism which allows us to take the period $\Omega_g^{\pm}$ to be the canonical period defined in \cite[(5)]{vatsal} (or see \cite[Section~3.3]{emerton-pollack-weston}) for all weight $k\geq 2$ modular forms $g$ in passing through by a cuspidal family corresponding to a minimal prime of $\gh_{\eta,\gm}$. The canonical period is well-defined up to a $p$-adic unit. One can then recover $\cL(g,\omega_p^j)$  by
$$
\cL(g,\omega_p^j)=\cL_{\eta,\gm}(\omega_p^j) (\bmod \gp_g) \in \gh_{\eta,\gm}/\gp_g\lsem X\rsem\simeq \cO_g\lsem X \rsem,
$$
where $\gp_g$ is the kernel of the algebraic homomorphism $\gh_{\eta,\gm}\to \cO_g$ corresponding to $g$.

We now take $\eta=\theta:=\chi\omega_p$, where $\chi$ is an odd primitive Dirichlet character of conductor $N$ with $\chi(p)=1$. Recall that $f$ denotes the unique $p$-stabilization of the weight one Eisenstein series $E_1(\chi,\mathbbm{1})$. For simplicity, we write $\cO:=\cO_f=\Z_p[\chi]$ and write $\Lambda:=\cO\lsem X \rsem$. Let $\pi$ denote the uniformizer of $\cO$, and let $\gm$ be the maximal ideal of $\gh_{\theta}$ containing the Eisenstein ideal $I_{\theta}$ of the Eisenstein family $\cE(\theta,\mathbbm{1})$ generated by 
\begin{equation}\label{eq:eis_ideal}
U^*_q-1 \mbox{ and }  T^*_{\ell}-\ell\theta^{-1}(\ell)-1 \mbox{ for all } q|Np \mbox{ and }  \ell \nmid Np.
\end{equation}
 As the weight one specialization of $\cE(\theta,\mathbbm{1})$ is $E_1(\chi,\mathbbm{1})$, one sees that $\gp_f$ contains $I_{\theta}$. 

From now on, we assume that the cuspidal Hecke algebra $\gh_{\theta,\gm}$ is Gorenstein. Following \cite{greenberg-vatsal20}, the $\omega_p^j$-branch $\cL(f,\omega_p^j)$ of the $p$-adic $L$-function of $f$ is defined by 
\begin{equation}\label{eq:def_padic_Lfcn_of_f}
\cL(f,\omega_p^j):=\cL_{\theta,\gm}(\omega_p^j) (\bmod \gp_f) \in \Lambda. 
\end{equation}
This is well-defined since it was shown in \cite[Theorem~A(i)]{BDP} that there exists a unique ordinary cuspidal Hida family passing through $f$.

\subsection{The $p$-adic $L$-function of weight one form $f$}\label{sec:p_adic_L_of_f}
The aim of this subsection is to describes the $p$-adic $L$-function $\cL(f,\omega_p^j)$ explicitly.

Recall that for an even Dirichlet character $\eta$, the Kubota--Leopoldt $p$-adic $L$-function $L_p(s,\eta)$ admits the interpolation property
\begin{equation}\label{eq:interpolation_kubota_leopoldt}
L_p(1-n,\eta)=L(1-n,\eta\omega_p^{-n})(1-\eta\omega_p^{-n}(p)p^{n-1})
\end{equation}
for all $n\in \Z_{\geq 1}$ and admits a power series expression 
\begin{equation}\label{eq:kubota_leopoldt_power_series}
L_p(1-s,\eta)=G_{\eta}(u^s-1),
\end{equation}
if $\eta$ of type $S$ (i.e. $F_{\eta}\cap \Q_{\infty}=\Q$, where $F_{\eta}$ is the field extension of $\Q$ cut out by $\eta$). Moreover, one has 
\begin{equation}\label{eq:KL_padic_L_typeW}
G_{\eta\psi}(T)=G_{\eta}(\psi(u)(1+T)-1)
\end{equation}
for all characters $\psi$ of type $W$ (i.e. $F_{\psi}\subset \Q_{\infty}$). 
By Weierstrass Preparation Theorem \cite[Theorem~7.3]{washington-GTM83}, every element $F(X)$ in $\Lambda$ can be written uniquely as $\pi^nf(X)u(X)$ for some $n\in \Z_{\geq 0}$, $u(X)\in \Lambda^{\times}$, and distinguished polynomial $f(X)$. The $\lambda$- and $\mu$-invariants of $F(X)$, denote respectively by $\lambda(F(X))$ and $\mu(F(X))$, are defined as 
\begin{equation}\label{eq:invarariant_power_series}
\lambda(F(X))=\deg(f(X)) \mbox{ and } \mu(F(X))=n.
\end{equation}
To state the result of $\cL(f,\omega_p^j)$, we set 
\begin{equation}\label{eq:def_an_inv}
\lambda^{\an}(f,j):=\lambda(\cL(f,\omega^j)) \mbox{ and }\mu^{\an}(f,j):=\mu(\cL(f,\omega^j)).
\end{equation}

\begin{thm}\label{p_aidc_L_wt_one_plus}
Assume that the cuspidal Hecke algebra $\gh_{\theta,\gm}$ is Gorenstein. Then, one has
\begin{equation}\label{eq:i_branch}
\cL(f,\omega_p^j)\sim_{\cO}  G_{\omega_p^{j}}(X)G_{\chi\omega_p^{1-j}}(u(X+1)^{-1}-1)
\end{equation}
for all even integers $j=0,\ldots,p-1$. In particular, one has $\mu^{\an}(f,j)=0$ and $\lambda^{\an}(f,j)=\lambda(G_{\omega_p^{j}}(X))+\lambda(G_{\chi\omega_p^{1-j}}(u(X+1)^{-1}-1))$. Recall from the Introduction that $\sim_{\cO}$ means equal up to a unit in $\cO$.
\end{thm}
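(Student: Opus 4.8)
The plan is to trace the definition of $\cL(f,\omega_p^j)$ back to the Mazur--Kitagawa two-variable $p$-adic $L$-function and then reduce modulo the Eisenstein prime $\gp_f$, identifying the result with a cup product of cyclotomic units that can be evaluated by an explicit reciprocity law. First I would recall that, by the Gorenstein hypothesis, $\cL(f,\omega_p^j) = \cL_{\theta,\gm}(\omega_p^j) \bmod \gp_f$, where $\cL_{\theta,\gm}$ is the image in $\gh_{\theta,\gm}\lsem \Z_p^\times\rsem$ of the Mazur--Kitagawa element built from modular symbols $[Na:1]_r$. Decomposing $H_{\theta,\gm}\simeq \gh_{\theta,\gm}^2$ into a "plus" and a "minus" part under complex conjugation, and observing that for even $j$ only the plus part $\cL^+_{\theta,\gm}(\omega_p^j)$ contributes, I would invoke the theorems of Sharifi \cite{Sha} and Fukaya--Kato \cite{FK}: modulo the Eisenstein ideal $I_\theta$, the element $\cL^+_{\theta,\gm}$ coincides with a map sending a group element to the cup product of cyclotomic units in Galois cohomology, i.e. the relation \eqref{eq:image_MKpadicL_under_varpi}.

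The second and conceptually crucial step is Lemma~\ref{cup_prod_modulo_pf}: when one passes further to the quotient by $\gp_f$ (not merely by $I_\theta$), the cup product \eqref{eq:image_MKpadicL_under_varpi}, a priori a global class in $H^1(\rG_{\Q,\Sigma},\,-)$, in fact lands in the local Galois cohomology at $p$. The point is that the two cyclotomic units being paired become, after reduction mod $\gp_f$, classes whose localizations away from $p$ vanish, so the cup product is supported at $p$; concretely one uses that $f$ is a $p$-adic cusp form with vanishing constant terms at $p$-ordinary cusps (\cite[Proposition~4.7]{BDP}), which forces the relevant Eisenstein-quotient of $H_{\theta,\gm}$ to match the local structure. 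Granting this, the third step is the explicit computation in Section~\ref{sec:local_cup_prod}: the local cup product of cyclotomic units at $p$ is computed by the explicit reciprocity law (Bloch--Kato / Kato's local pairing), which expresses it in terms of $p$-adic logarithms of cyclotomic units, and these are precisely the values of Kubota--Leopoldt $p$-adic $L$-functions. One then matches the two resulting branches: the factor coming from the $\omega_p^j$-twist of one unit produces $G_{\omega_p^j}(X)$, and the factor coming from the $\chi\omega_p^{1-j}$-twist of the other produces $G_{\chi\omega_p^{1-j}}(X)$ composed with the involution $X\mapsto u(1+X)^{-1}-1$ that reflects the functional-equation symmetry $r\leftrightarrow k-r$ built into the two-variable $L$-function. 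This yields \eqref{eq:i_branch} up to a unit in $\Lambda$, and the assertions $\mu^{\an}(f,j)=0$ and the stated formula for $\lambda^{\an}(f,j)$ follow immediately from \eqref{eq:invarariant_power_series} together with the classical fact that the $\mu$-invariants of $G_{\omega_p^j}$ and $G_{\chi\omega_p^{1-j}}$ vanish (Ferrero--Washington).

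I expect the main obstacle to be the second step, Lemma~\ref{cup_prod_modulo_pf}: controlling what happens to Sharifi's cup-product description of $\cL^+_{\theta,\gm}$ when one reduces modulo the height-one prime $\gp_f$ rather than the Eisenstein ideal $I_\theta$, and verifying that the class becomes local at $p$. This requires a careful analysis of the Galois cohomology of the residual representation and of the interaction between the Eisenstein congruence and the local conditions defining $f$; the Gorenstein hypothesis enters precisely here, since it is what makes $H_{\theta,\gm}$ free of rank two and thus lets one split off a rank-one Eisenstein quotient cleanly. A secondary technical point is bookkeeping the normalizations of periods and Gauss sums so that the two Kubota--Leopoldt factors emerge with the correct characters and the correct involution; this is where comparison with the interpolation formulas \eqref{eq:special_value_nontrivial}--\eqref{eq:special_value_trivial} and the identities \eqref{eq:interpolation_kubota_leopoldt}--\eqref{eq:KL_padic_L_typeW} must be done with care, but it is routine once the structural result is in place.
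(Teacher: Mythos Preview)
Your proposal follows essentially the same route as the paper: reduce the Mazur--Kitagawa element modulo $\gp_f$, identify it with the cup product $\cC_\theta$ of cyclotomic units via Sharifi and Fukaya--Kato, use Lemma~\ref{cup_prod_modulo_pf} to pass to the local quotient $S'_\chi(1)$, then compute the local cup product by the explicit reciprocity law (Proposition~\ref{local_cup_prod} and Corollary~\ref{local_cup_prod_2}) to obtain the product of Kubota--Leopoldt values, and finally deduce the $\mu$- and $\lambda$-invariant statements from Ferrero--Washington.

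One point worth correcting: your account of \emph{why} Lemma~\ref{cup_prod_modulo_pf} holds is off. You attribute it to the $p$-adic cusp form property of $f$ from \cite{BDP} and an analysis of the residual Galois cohomology; in the paper the argument is purely structural. One uses the short exact sequence $0\to X_{Np,p}^{(\chi)}(1)\to S_\theta\to S'_\chi(1)\to 0$, the fact that $X_{Np,p}^{(\chi)}/(X)$ is finite while $S'_\chi(1)/(X)=S'_\chi(1)\simeq\Z_p[\chi]$, together with the input from \cite[Theorem~1.2]{shih-wang} that $\bar\varpi_\theta$ is an isomorphism after localizing at $\gp_1$. Injectivity of $\bar\varpi_\theta$ then follows because a non-injective map from $\Z_p[\chi]$ would have finite image, contradicting the localization statement; the image is therefore the free-rank-one piece $S'_\chi(1)$. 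The Gorenstein hypothesis is used only to make $H^-_{\theta,\gm}(1)/\gp_f$ a free rank-one $\cO$-module, not in the way you suggest.
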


\begin{proof}
Let $j\leq p-1$ be an even positive integer. Applying Corollary~\ref{local_cup_prod_2} below to any finite order characters $\psi$ of $
\Z_p^{\times}$ with $\omega_p^j=\psi|_{(\Z/p\Z)^{\times}}$ and $\psi':=\psi|_{1+p\Z_p}$, we see that $\cL(f,\omega_p^j)$ is, up to a unit in $\cO$, a product of two elements $F(X)$ and $H(X)$ in $\Lambda$ such that 
$$
F(\psi'(u)-1)H(\psi'(u)-1)=L_p(1,\omega_p^j\psi')L_p(0,\chi\omega_p^{1-j}\bar{\psi'})
$$
for all finite order characters $\psi'$ of $1+p\Z_p$. Then, the assertion \eqref{eq:i_branch} follows from \eqref{eq:kubota_leopoldt_power_series} and \eqref{eq:KL_padic_L_typeW}. Moreover, the vanishing of the $\mu$-invariant follows from a work of Ferrero--Washington \cite{ferrero-washington} and \eqref{eq:classical_main_conj} that $\mu(G_{\omega_p^j}(X))=0=\mu(G_{\chi\omega_p^{1-j}}(X))$ .
\end{proof}

We next study $\cL_{\theta,\gm}(f,\omega^j)$ for odd integers $j$. Let $\cL_{\theta,\gm}^-(f)$ denote the support of $\cL_{\theta,\gm}(f)$ on all odd character branches of $\cL_{\theta,\gm}(f)$.

\begin{prop}\label{minus_padic_L_f}
Let the notation be as above. Then one has $\cL_{\theta,\gm}^-(f)=0$.
\end{prop}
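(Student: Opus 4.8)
The plan is to follow, almost verbatim, the chain of identifications behind the proof of Theorem~\ref{p_aidc_L_wt_one_plus}, but carried out for the twisted element ${}^{0}\cL$ rather than $\cL$, and to observe that the twist by the norm character shifts the parity of the Dirichlet characters that eventually show up, so that the explicit reciprocity computation now outputs products of Kubota--Leopoldt $p$-adic $L$-functions of \emph{odd} characters, all of which vanish identically.

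More precisely, by the definition of $\cL(f,\omega_p^j)$ for odd $j$ recalled just above, it suffices to prove that ${}^{0}\cL^{+}_{\theta,\gm}(\omega_p^j)\equiv 0\pmod{\gp_f}$ for every odd $1\le j\le p-1$. First I would work modulo the Eisenstein ideal $I_\theta$: the twisted analogue of the Sharifi / Fukaya--Kato input used in the even case identifies ${}^{0}\cL^{+}_{\theta,\gm}\bmod I_\theta$ with a cup product of cyclotomic units, the operator $x\otimes[a]\mapsto ax\otimes[a]$ translating on the Beilinson--Kato/Sharifi side into a shift of the cyclotomic variable by the norm character and thus being carried through the comparison without changing the shape of the answer. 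Then the twisted analogue of Lemma~\ref{cup_prod_modulo_pf} shows that after further reduction modulo $\gp_f$ this cup product lies in the local Galois cohomology at $p$, hence, by the computation of Section~\ref{sec:local_cup_prod} (the twisted version of Corollary~\ref{local_cup_prod_2}) applied to characters $\psi$ of $\Z_p^\times$ whose restriction to $(\Z/p\Z)^\times$ is $\omega_p^j$, its specialization at each finite-order character $\psi'$ of $1+p\Z_p$ has the form $L_p(\,\cdot\,,\omega_p^{j}\psi')\,L_p(\,\cdot\,,\chi\omega_p^{1-j}\overline{\psi'})$. Since $j$ is odd and $\chi$ is odd, both $\omega_p^{j}\psi'$ and $\chi\omega_p^{1-j}\overline{\psi'}$ are odd Dirichlet characters, so every such value vanishes; as the finite-order characters of $1+p\Z_p$ are Zariski dense in $\Spec\Lambda$, a power series vanishing at all of them is $0$. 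Hence $\cL(f,\omega_p^j)=0$ for all odd $j$, i.e.\ $\cL^{-}(f)=0$.

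The main obstacle will be setting up, with the correct normalizations, the twisted versions of the two inputs borrowed from the even case: the Fukaya--Kato-type description of ${}^{0}\cL^{+}_{\theta,\gm}\bmod I_\theta$ as a cup product (one must check that $x\otimes[a]\mapsto ax\otimes[a]$ really does induce the expected shift on the cyclotomic units and introduces no spurious factor), and the analogue of Lemma~\ref{cup_prod_modulo_pf} ensuring that reduction modulo $\gp_f$ lands in local cohomology. Neither step needs a new idea beyond those already deployed in Sections~\ref{sec:p_adic_L}--\ref{sec:local_cup_prod}; the content is purely the bookkeeping of the extra diamond/norm twist and of the resulting shift in the Kummer characters, after which the conclusion is immediate from the identical vanishing of $L_p(s,\eta)$ for odd $\eta$ together with density of finite-order characters.
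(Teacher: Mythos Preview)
Your approach differs substantially from the paper's, and there is a genuine structural obstacle that is more than bookkeeping.

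The paper's proof is a two-line argument: it cites \cite[Theorem~8.2.2(1)]{FK}, which gives directly
\[
{}^{0}\cL_{\theta}^{+}\bmod I_{\theta}=2\sigma_{-1}\cdot A_{0}\,\tilde{\xi},
\]
and then simply observes that $\gp_f\cap\Lambda=(X)$ and that $\tilde{\xi}\equiv 0\bmod X$ by inspection of its definition. No explicit reciprocity computation is performed.

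Your plan to ``repeat the even case with a twist'' runs into a parity mismatch you have not confronted. In the even case one uses Sharifi's map $\varpi_{\theta}:H^{-}_{\theta}(1)\to S_{\theta}$, which is defined on the \emph{minus} eigenspace, and $\cL^{+}_{\theta}$ lies in $H^{-}_{\theta}\lsem\Z_p^{\times}\rsem$. The twist $x\otimes[a]\mapsto ax\otimes[a]$ reverses the parity under complex conjugation, so ${}^{0}\cL^{+}_{\theta}$ lands in $H^{+}_{\theta,\gm}\lsem\Z_p^{\times}\rsem$ (as the paper states just above the proposition). Hence $\varpi_{\theta}$ does not apply, and there is no ``twisted analogue of Lemma~\ref{cup_prod_modulo_pf}'' to invoke: the target $S_{\theta}$ and the cup-product description $\cC_{\theta}$ are simply not available on this eigenspace. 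What \emph{is} available on $H^{+}$ modulo the Eisenstein ideal is precisely the Fukaya--Kato formula above, whose shape (a product with the zeta element $\tilde{\xi}$ rather than a cup product of cyclotomic units) is genuinely different from \eqref{eq:image_MKpadicL_under_varpi}.

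Your final heuristic---that one should eventually see Kubota--Leopoldt values attached to odd characters, which vanish---is morally compatible with why $\tilde{\xi}\bmod X=0$, but you cannot reach it by transporting Corollary~\ref{local_cup_prod_2} through $\varpi_{\theta}$. The shortest correct route is exactly the paper's: quote \cite[Theorem~8.2.2(1)]{FK} and read off the vanishing of $\tilde{\xi}$ at weight one.
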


\begin{proof}
Set $^{0}\cL\in H\lsem \Z_p^{\times}\rsem$ to be the image of $\cL\in H\lsem \Z_p^{\times}\rsem$ under the isomorphism 
$$
H\lsem \Z_p^{\times}\rsem \simeq H\lsem \Z_p^{\times} \rsem;\; x\otimes [a]\mapsto ax\otimes [a],
$$
and let $^{0}\cL_{\theta,\gm}^+(f,\omega_p^j)\in \Lambda$ denote the image of $^{0}\cL_{\theta,\gm}^+(\omega_p^j)$ in $H^+_{\theta,\gm}/\gp_f\lsem X \rsem$ under the canonical isomorphism 
$$
H^+_{\theta,\gm}/\gp_f\simeq (H^+_{\theta,\gm}/I_{\theta})/(\gp_f/I_{\theta})\simeq (\gh_{\theta,\gm}/I_{\theta})/(\gp_f/I_{\theta})
\simeq \cO.
$$ 
By definition, for any odd integer $1\leq j\leq p-1$, 
$\cL(f,\omega_p^j)$ coincides with $^{0}\cL_{\theta,\gm}^+(\omega_p^j)(\bmod \gp_f)$ in $\Lambda$. Therefore, to prove the assertion, it suffices to show that $^{0}\cL_{\theta,\gm}^+ (\bmod \gp_f)$ is trivial.

Following the notation in \cite[Theorem~8.1.2]{FK}, we let $^{-1}{\xi_{Np^{\infty}}}\in \Z_p\lsem (\Z/N\Z)^{\times}\times \Z_p^{\times} \rsem$ denote the $p$-adic $L$-function whose image under any character $\psi$ of $(\Z/N\Z)^{\times} \times \Z_p^{\times}$ is the complex $L$-value of the partial $L$-function $L^{(Np)}(s,\psi^{-1})$ at $s=-1$. By Theorem~8.2.2(1) of \textit{op.~cit.}, one has $^{0}\cL_{\theta}^+ \bmod I_{\theta}=2\sigma_{-1}\cdot A_0\tilde{\xi}$ for some $A_0$ in $Q(\Z_p\lsem \Z_p^{\times} \rsem)$ and $\tilde{\xi}\in \Lambda\lsem \Z_p^{\times}\rsem$ obtained by the image of the $p$-adic $L$-function $^{-1}{\xi_{Np^{\infty}}}$ under the morphism 
$$
\Z_p\lsem(\Z/N\Z)^{\times}\times \Z_p^{\times}\rsem \to \Z_p\lsem (\Z/N\Z)^{\times} \times \Z_p^{\times} \rsem \lsem \Z_p^{\times} \rsem \to \Lambda\lsem \Z_p^{\times} \rsem.
$$
Here the first map sends $[a]$ to $[a]\sigma_a$ and the second map is the natural projection. Note that the image $^{-1}\xi_{Np^{\infty},\chi\omega_p}$ of $^{-1}\xi_{Np^{\infty}}$ under the natural projection $\Z_p\lsem (\Z/N\Z)^{\times} \times \Z_p^{\times} \rsem \to \Lambda$ is the $p$-adic $L$-function whose image under any character $\psi$ of $1+p\Z_p$ is the complex $L$-value $L^{(Np)}(-1,\chi^{-1}\omega_p^{-1}\psi^{-1})$. It is clear that the $p$-adic $L$-function $^{-1}\xi_{Np^{\infty},\chi\omega_p}$ admits a trivial zero and hence, the cofficiens of $\tilde{\xi}$ in $\Lambda$ generates an ideal divisible by $\gp_1$, the ideal of $\Lambda$ corresponding to the weight one specialization. As $\gp_f\cap \Lambda$ coincides with $\gp_1$, one obtains from the above discussion that $^{0}\cL_{\theta,\gm}^+ (\bmod \gp_f)$ is trivial. Thus, the assertion follows. 
\end{proof}

Note that if the character $\chi$ is quadratic, then the weight one Eisenstein series $E_1(\chi,\mathbbm{1})$ coincides with the theta series $\theta_{\mathbbm{1}}$ attached to the trivial character $\mathbbm{1}$, defined in \cite[p.~235, (1)]{hida}. The following consequence of Theorem~\ref{p_aidc_L_wt_one_plus} asserts that the $p$-adic $L$-function of $f$ coincides, up to a unit in $\cO$, with the trivial character branch of the cyclotomic Katz $p$-adic $L$-function $\cL^{\mathrm{Ka}}_{\mathrm{cyc}}(\mathbbm{1})$ constructed in \cite{katz78}.

\begin{cor}\label{katz_p_adic_L}
Let the assumption be as in Theorem~\ref{p_aidc_L_wt_one_plus}. If $\chi$ is quadratic, then one has 
$$
\cL(f,\omega^0)\sim_{\cO} \cL^{\mathrm{Ka}}_{\mathrm{cyc}}(\mathbbm{1}).
$$ 
\end{cor}

\begin{proof}
The assertion follows from Theorem~\ref{p_aidc_L_wt_one_plus} and the Gross's factorization \cite{gross80} for the trivial character.
\end{proof}

Define $\lambda^{\an}(g,j)$ and $\mu^{\an}(g,j)$ in the same manner as in \eqref{eq:def_an_inv}. The following is a consequence of Theorem~\ref{p_aidc_L_wt_one_plus} and Proposition~\ref{minus_padic_L_f}.

\begin{cor}\label{ana_inv_(g,j)}
Let $g$ be a classical weight $k\geq 2$ ordinary cusp form in a Hida family corresponding to a minimal prime ideal of $\gh_{\theta,\gm}$. If $j$ is even, then one has 
$$
\mu^{\an}(g,j)=0 \mbox{ and } \lambda^{\an}(g,j)=\lambda(G_{\omega_p^{j}}(X))+\lambda(G_{\chi\omega_p^{1-j}}(u(X+1)^{-1}-1))
$$ 
and if $j$ is odd, then one has $\mu^{an}(g,j)\neq 0$.
\end{cor}

\begin{proof}
The assertion essentially follows from the discussion in \cite[Section~3.7]{emerton-pollack-weston}. For the sake of convenience, we give a proof here. Let $I(\cL_{\theta,\gm}(\omega_p^j))$ be the ideal of $\gh_{\theta,\gm}$ generated by the coefficient of $\cL_{\theta,\gm}(\omega_p^j)$. It follows from Theorem~\ref{p_aidc_L_wt_one_plus} that the ideal $I(\cL_{\theta,\gm}(\omega_p^j))$ modulo the height one prime $\gp_f$ is $\cO$ if $j$ is even and is divisible by $\gp_f$ if $j$ is odd. It then follows from Proposition~3.7.3 of \textit{loc.~cit.} that $\cL_{\theta,\gm}$ $I(\gh_{\theta,\gm})$ modulo $\gp_g$ is $\cO_g$ in the former case, which is equivalent to the vanishing of $\mu^{\an}(g,j)$, and divisible by $p$ in the latter case, which is equivalent to the non-vanishing of $\mu^{\an}(g,j)$. When $j$ is even, one further has $\lambda(g,j)=\lambda(f,j)$ by a remark after Definition~3.7.6 of \textit{loc.~cit.}~ that if $\mu$-invariants vanishe, then $\lambda$-invariants do not change under local homomorphism of complete local rings.
\end{proof}


\subsection{Computation of local cup products}\label{sec:local_cup_prod}

For $N\in \Z_{\geq 1}$ prime to $p$, let $L_{\infty}$ (resp.~$M_{\infty}$) be the maximal unramified (resp.~unramified outside $p$) abelian  pro-$p$ extension of $\Q(\zeta_{Np^{\infty}}):=\cup_{n=1}^{\infty} \Q(\zeta_{Np^n})$ for $N\in \Z_{\geq 1}$. Set
\begin{equation}\label{eq:iwasawa_module}
X_{Np^{\infty}}:=\Gal(L_{\infty}/\Q(\zeta_{Np^{\infty}})) \mbox { and }
\mathfrak{X}_{Np^{\infty}}:=\Gal(M_{\infty}/\Q(\zeta_{Np^{\infty}})).
\end{equation}
In addition, let $L_{\infty,p}\subset L_{\infty}$ denote the field extension of $\Q(\zeta_{Np^{\infty}})$ in which $p$ splits completely, and set $X_{Np^{\infty},p}:=\Gal(L_{\infty,p}/\Q(\zeta_{Np^{\infty}}))$. These groups have a natural action of $\Gal(\Q(\zeta_{Np^{\infty}})/\Q)\simeq (\Z/Np\Z)^{\times}\times \Z_p$ via conjugation. For any Dirichlet character $\eta$ of $(\Z/Np\Z)^{\times}$, we denote by $X_{Np^{\infty}}^{(\eta)}$ the submodule of $X_{Np^{\infty}}$ on which $(\Z/Np\Z)^{\times}$ acts via $\eta$. Also, we denote by $\mathfrak{X}_{Np^{\infty}}^{(\eta)}$ and $X_{Np^{\infty},p}^{(\eta)}$ in the same manner. 

The map 
$$
\varpi_{\theta}: H_{\theta}^-(1) \rightarrow \varprojlim H^2(\Z[\zeta_{Np^r},\tfrac{1}{p}],\Z_p(2))_{\theta}^+=:S_{\theta}.
$$
defined by Sharifi \cite[Proposition~5.7]{Sha} sends the Manin symbols $[u:v]^+_{r\geq 1}$ to the cup product $(1-\zeta_{Np^r}^u, 1-\zeta_{Np^r}^v)_{r\geq 1,\theta}$. Moreover, it is shown by Fukaya--Kato \cite[Theorem~5.2.3]{FK} that the homomorphism $\varpi_{\theta}$ factors through the quotient of $I_{\theta}H_{\theta}^-(1)$.

Set $\gp_1:=\gp_f\cap \Lambda$ which corresponds to the exceptional zero factor of $G_{\chi\omega_p}(X)$ in $\Lambda$ and also corresponds to the kernel of the weight $1$ specialization. We have seen in the previous subsection that $\gp_{f}$ contains the Eisenstein ideal $I_{\theta}$, so the map $\varpi_{\theta}$ induces the following map
\[
\bar\varpi_{\theta}: (H^-_{\theta,\gm}(1)/\gp_{f}H^-_{\theta,\gm}(1))\lsem\Z_p^{\times}\rsem\rightarrow (S_{\theta}/\gp_1S_{\theta})\lsem\Z_p^{\times}\rsem.
\]
Recall from \cite[Section~3.1]{shih-wang} that we have a short exact sequence of $\Lambda_{\cO_f}$-modules
$$
0\to X_{Np^{\infty},p}^{(\chi)}(1)\to S_{\theta} \to S'_{\chi}(1) \to 0,
$$
where $S'_{\chi}\simeq \Z_p[\chi]$ is the $\chi$-component of the Brauer groups. By Lemma~3.1 of \textit{loc.~cit.}, one sees that $X_{Np^{\infty},p}^{(\chi)}/\gp_1$ is finite since its characteristic ideal is prime to $\gp_1$ and that $S'_{\theta}/\gp_1S'_{\theta}=S'_{\theta}$ since it characteristic ideal is $\gp_1$. 

\begin{lemma}\label{cup_prod_modulo_pf}
Let the notation be as above. If the cuspidal Hecke algebra $\gh_{\theta,\gm}$ is Gorenstein, then the homomorphism
$\bar\varpi_{\theta}$ is injective and its image is $S'_{\chi}(1)$. Consequently, the image of $\mathcal{L}_{\theta,\gm}(\bmod \gp_f)$ under the map $\varpi_{\theta}$ is in $S'_{\chi}(1)\lsem \Z_p^{\times} \rsem$.
\end{lemma}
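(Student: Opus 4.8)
The plan is to analyze the map $\bar\varpi_\theta$ through the filtration on $S_\theta$ given by the short exact sequence
$$
0\to X_{Np,p}^{(\chi)}(1)\to S_\theta \to S'_\chi(1)\to 0,
$$
reducing everything modulo $(X)=\gp_f\cap\Lambda$. First I would record that, since the source and target of $\bar\varpi_\theta$ are obtained from $\Lambda_{\cO}$-modules by reduction mod $(X)$ and then completing along $\Z_p^\times$, and since tensoring with $\Lambda_{\cO}[1/\gp_f]$ (equivalently localizing at $\gp_f$) is exact, Theorem~1.2 of \cite{shih-wang} tells us $\bar\varpi_\theta$ becomes an isomorphism after localizing at $\gp_f$. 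The strategy is then purely a ``no finite submodule'' / torsion bookkeeping argument: I would show that the kernel and cokernel of $\bar\varpi_\theta$ are supported away from $\gp_f$ and, being quotients/submodules of finite modules, are actually finite, and finally that the source is $\gp_f$-torsion free so that an injection follows.

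Concretely, the key steps, in order, are: (1) From Lemma~3.1 of \cite{shih-wang}, the characteristic ideal of $X_{Np,p}^{(\chi)}$ is prime to $(X)$, so $X_{Np,p}^{(\chi)}/(X)$ is finite; on the other hand $S'_\chi(1)\simeq\Z_p[\chi]$ has trivial $X$-action, so $S'_\chi(1)/(X)S'_\chi(1)=S'_\chi(1)$ is free of rank one over $\Z_p[\chi]$. Reducing the displayed short exact sequence mod $(X)$ and using $\mathrm{Tor}_1^{\Lambda}(\Z_p[\chi],\Lambda/(X))=0$ when the relevant module is $X$-torsion-free — here one has to be slightly careful, but since $S'_\chi(1)$ is $X$-torsion-free the Tor term is controlled and the only damage is the finite group $X_{Np,p}^{(\chi)}[X]$ — gives that $S_\theta/(X)S_\theta$ is, up to finite error, $S'_\chi(1)$. (2) The Gorenstein hypothesis enters via \cite{shih-wang} Theorem~1.2 (which already presupposes it) to guarantee $\bar\varpi_\theta\otimes\Lambda_{\cO,\gp_f}$ is an isomorphism; hence $\ker\bar\varpi_\theta$ and $\operatorname{coker}\bar\varpi_\theta$ are annihilated by some power of a prime different from $\gp_f$. (3) Since $H^-_{\theta,\gm}(1)/\gp_f H^-_{\theta,\gm}(1)$ is, via the Gorenstein isomorphism $H_{\theta,\gm}\simeq\gh_{\theta,\gm}^2$ and $\gh_{\theta,\gm}/\gp_f\simeq\cO$, free over $\cO$ (in particular $X$-torsion free, so has no nonzero finite submodule), any homomorphism whose kernel is finite must be injective; this gives injectivity of $\bar\varpi_\theta$. (4) Comparing ranks over $\Z_p[\chi]$ on both sides — rank one on the source (again by the Gorenstein freeness) and, from step (1), rank one on $S_\theta/(X)S_\theta$ coming precisely from the $S'_\chi(1)$ quotient while the $X_{Np,p}^{(\chi)}(1)$ part contributes only finitely — forces the image of $\bar\varpi_\theta$ to be a finite-index $\Z_p[\chi]$-submodule of $S'_\chi(1)$; but by Sharifi's explicit description $\varpi_\theta$ sends Manin symbols to cup products of cyclotomic units, and the image of $\mathcal L_{\theta,\gm}$ generates, which pins the image down to all of $S'_\chi(1)$ (rather than a proper sublattice). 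This last normalization is where one invokes that $S'_\chi(1)/(X)S'_\chi(1)=S'_\chi(1)$ with characteristic ideal exactly $(X)$, so no further $X$-divisibility can occur.

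The consequence for $\mathcal L_{\theta,\gm}$ is then immediate: $\mathcal L_{\theta,\gm}(\bmod\,\gp_f)$ is the image under $\bar\varpi_\theta$ of $\mathcal L^-_{\theta,\gm}(\bmod\,\gp_f)\in (H^-_{\theta,\gm}(1)/\gp_f)\lsem\Z_p^\times\rsem$ (using that $\varpi_\theta$ factors through $I_\theta H^-_\theta(1)$ and $\gp_f\supseteq I_\theta$), and since the image of $\bar\varpi_\theta$ is $S'_\chi(1)$, the element $\varpi_\theta(\mathcal L_{\theta,\gm}(\bmod\,\gp_f))$ lies in $S'_\chi(1)\lsem\Z_p^\times\rsem$.

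I expect the main obstacle to be step~(1)/(4): controlling the mod-$(X)$ reduction of the short exact sequence precisely enough to conclude that the $X_{Np,p}^{(\chi)}(1)$ contribution is genuinely finite (as opposed to merely torsion) and does not sneak into the image of $\bar\varpi_\theta$, together with the normalization that the image is all of $S'_\chi(1)$ and not a proper finite-index submodule. This requires combining the structure theory of $\Lambda_{\cO}$-modules (Weierstrass preparation, coprimality of characteristic ideals with $(X)$) with the explicit compatibility of $\varpi_\theta$ with Manin symbols and cyclotomic units from \cite{Sha,FK}, and with the $\gp_f$-localization isomorphism of \cite{shih-wang}. Everything else is a routine ``no finite submodule forces injectivity'' argument once the Gorenstein freeness of $H_{\theta,\gm}$ over $\gh_{\theta,\gm}$ is in hand.
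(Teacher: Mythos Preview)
Your approach is essentially the paper's, but considerably more elaborate. The paper's proof is four sentences: since $\gh_{\theta,\gm}/\gp_f=\Z_p[\chi]$ is a DVR and (by Gorensteinness) the source is free of rank one over it, a non-injective map out of it would have \emph{finite} image, contradicting that $\bar\varpi_\theta$ becomes an isomorphism after localizing at $\gp_1$ \cite[Theorem~1.2]{shih-wang}; this gives injectivity, and for the image the paper simply invokes $S'_\chi(1)\simeq\Z_p[\chi]$ as a $\Z_p[\chi]$-module \cite[Proposition~3.2]{shih-wang}. Your injectivity argument in step~(3) (finite kernel inside a torsion-free module must vanish) is just the dual phrasing of the same idea.

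You correctly flag step~(4) as the delicate point, but your proposed resolution---invoking Sharifi's explicit formula and a generation claim for $\cL_{\theta,\gm}$ to rule out a proper finite-index sublattice of $S'_\chi(1)$---is both unnecessary and circular. Unnecessary because Theorem~\ref{p_aidc_L_wt_one_plus} only determines $\cL(f,\omega_p^j)$ as an ideal in $\Lambda_\cO$, so a finite-index discrepancy costs at most a $p$-adic unit; circular because knowing that the image of $\cL_{\theta,\gm}$ generates $S'_\chi(1)$ would presuppose the explicit computation of Proposition~\ref{local_cup_prod} that this lemma is meant to set up. The paper sidesteps the issue entirely by arguing from the $\Z_p[\chi]$-module structure alone.
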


\begin{proof}
Recall that $\cO:=\gh_{\theta,\gm}/\gp_f=\Z_p[\chi]$. Suppose that $\bar\varpi_{\theta}$ is not injective. Then the image of $\bar\varpi_{\theta}$ is isomorphic to a quotient of $\Z_p[\chi]$ and hence is finite, which is impossible since $\bar\varpi_{\theta}$ is an isomorphism after localizing at $\gp_1$ (\cite[Theorem~1.2]{shih-wang}). Therefore, $\bar\varpi_{\theta}$ is injective and hence, its image is $S'_{\chi}(1)$ as $\bar\varpi_{\theta}$ is a $\Z_p[\chi]$-module homomorphism and $S'_{\chi}(1)$ is isomorphic to $\Z_p[\chi]$ as a $\Z_p[\chi]$-module \cite[Proposition~3.2]{shih-wang}.
\end{proof}

Denote by $\cL_{\theta}^+$ the plus part of $\cL_{\theta}$ defined in terms of $[Na:1]_{r, \theta}^+$. By \cite[Proposition~5.2.12]{FK},
the map $\varpi_{\theta}$ sends $\cL_{\theta}^+\in H_{\theta}^-\lsem\Z_p^{\times}\rsem$ to
\begin{equation}\label{eq:image_MKpadicL_under_varpi}
\cC_{\theta}:=(\sum_{a\in (\Z/p^r\Z)^{\times}}(1-\zeta_{p^r}^a,1-\zeta_{Np^r})_{\theta}[a])_{r\geq 1}\in S_{\theta}\lsem\Z_p^{\times}\rsem.
\end{equation}
Since the map $\varpi_{\theta}$ factors through the quotient by the Eisenstein ideal $I_{\theta}$, $\cL_{\theta}^+ \bmod I_{\theta}$ coincides with $\cC_{\theta}$ in 
$S_{\theta}\lsem \Z_p^{\times}\rsem$. For any finite order character $\psi$ of $\Z_p^{\times}$, we have 
\begin{equation}\label{eq:image_twovar_padi_L_under_varpi}
\psi(\cC_{\theta})=\sum_{a\in (\Z/p^r\Z)^{\times}} \bar\psi(a) (1-\zeta_{p^r}^a,1-\zeta_{Np^r})_{r\geq 1,\theta} 
\in S_{\theta},
\end{equation}
and we denote by $\psi(\cC_{\theta})'$ the image of $\psi(\cC_{\theta})$ in $S'_{\chi}(1)$. Note that on the right hand side of \eqref{eq:image_twovar_padi_L_under_varpi}, we have $\bar{\psi}(a)$ rather than $\psi(a)$ because of the interpolation property \eqref{eq:special_value_nontrivial}.
When $\psi$ is the trivial character, it is known (for example, see \cite[Corollary~3.4]{shih-wang}) that $\psi(\cL_{\theta})=(1-U^*(p)^{-1})\{0,\infty\}_{r\geq 1, \theta}$ sends to the cup product $(p,1-\zeta_{Np^r})_{r\geq 1,\theta}$ via $\varpi_{\theta}$ and its image in $S'_{\chi}(1)$
has been computed in \cite[Theorem~1.3]{shih-wang}. We now compute $\psi(\cC_{\theta})'$ for non-trivial characters $\psi$. To this end, we first prove the following proposition.

\begin{prop}\label{local_cup_prod}
Let the notation be as above. Then for any even non-trivial character $\psi$ of $\Z_p^{\times}$ of conductor $p^r$ for some $r\in \Z_{>1}$, we have
\[
\begin{split}
\sum_{a\in (\Z/p^r\Z)^{\times}} \bar\psi(a) (1-\zeta_{p^r}^a,1-\zeta_N^{p^{-r}}\zeta_{p^r})_{r,\theta}
= &-\frac{\tau(\bar\chi)}{\phi(N)}L_p(1,\psi)L_p(0,\chi\omega_p\bar\psi)\in (\cO/p^r\cO)(1)
\end{split}
\] 
\end{prop}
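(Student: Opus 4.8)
The plan is to evaluate the local cup product
$$
\sum_{a\in (\Z/p^r\Z)^{\times}} \bar\psi(a)\,(1-\zeta_{p^r}^a,\,1-\zeta_N^{p^{-r}}\zeta_{p^r})_{r,\theta}
$$
by splitting the second argument of the cup product. Since $S'_{\chi}(1)$ is the $\chi$-component of a Brauer group, isomorphic to $\Z_p[\chi]$ by \cite[Proposition~3.2]{shih-wang}, I expect the symbol $(1-\zeta_{p^r}^a, 1-\zeta_N^{p^{-r}}\zeta_{p^r})$ to be computed locally at a prime above $p$ in $\Q(\zeta_{Np^r})$ via the local Hilbert symbol / tame and wild symbol formulas. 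The key observation is that $1-\zeta_N^{p^{-r}}\zeta_{p^r}$ is a $p$-unit near $p$ while $1-\zeta_{p^r}^a$ is a genuine local unit (being $1$ minus a principal unit root of unity at $p$), so the relevant pairing is the Kummer/Artin--Schreier symbol that detects the $p$-adic logarithm of $1-\zeta_{p^r}^a$ against the valuation or the reduction of $1-\zeta_N^{p^{-r}}\zeta_{p^r}$.

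First I would pass to the completion at a fixed prime $\mathfrak p \mid p$ of $\Q(\zeta_{Np^r})$ and use the interpretation of the cup product in $H^2(\Z[\zeta_{Np^r},\tfrac1p],\Z_p(2))$ paired against $S'_\chi(1)$ as an explicit local symbol; the work of Sharifi \cite{Sha} and the conventions of \cite{shih-wang} give the precise normalization. Second, I would expand $1-\zeta_N^{p^{-r}}\zeta_{p^r}$ using that $\zeta_N^{p^{-r}}$ is a fixed $N$-th root of unity and that modulo the maximal ideal the factor $\zeta_{p^r}\equiv 1$, so $1-\zeta_N^{p^{-r}}\zeta_{p^r}\equiv 1-\zeta_N^{p^{-r}}$, and the $\chi$-projection picks out the Gauss sum $\tau(\bar\chi)$ together with the factor $\phi(N)^{-1}$ coming from averaging over $(\Z/N\Z)^\times$. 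Third, the sum $\sum_a \bar\psi(a)(1-\zeta_{p^r}^a,\,-\,)$ over the local units $1-\zeta_{p^r}^a$ against the cyclotomic character direction produces, by Iwasawa's classical computation of local units and cyclotomic units (essentially the Coleman map / Coates--Wiles homomorphism applied to cyclotomic units), the value $L_p(1,\psi)$; the complementary factor $L_p(0,\chi\omega_p\bar\psi)$ arises because the other argument contributes the Kubota--Leopoldt $p$-adic $L$-value at $s=0$ for the character $\chi\omega_p\bar\psi$ through the reflection built into the $\theta = \chi\omega_p$ twist and Stickelberger-type relations for the element $1-\zeta_N$.

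Concretely, the main technical input will be the explicit reciprocity law of Coleman (or Wiles, or Bloch--Kato) for the cyclotomic tower $\Q_p(\zeta_{p^r})$, which converts the cup product of a cyclotomic unit with the element $1-\zeta_{p^r}^a$ into a sum involving $p$-adic logarithms, and then the identification of the resulting $\psi$-twisted sum with $L_p(1,\psi)$ via the standard interpolation of the Kubota--Leopoldt $p$-adic $L$-function by cyclotomic units (as in Washington \cite{washington-GTM83}, Chapter~13, or Iwasawa's original work). The sign $-1$ and the constant $\tau(\bar\chi)/\phi(N)$ are bookkeeping that I would pin down by comparing with the already-established trivial-character case \cite[Theorem~1.3]{shih-wang}, where the analogous computation gives the cup product $(p,1-\zeta_{Np^r})_\theta$; matching normalizations there fixes all ambiguities here.

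The hard part will be the precise bookkeeping of the local symbol: getting the correct normalization of the cup product pairing into $S'_\chi(1)\simeq (\cO/p^r\cO)(1)$, tracking the Tate twist, and ensuring that the averaging over $(\Z/N\Z)^\times$ produces exactly $\tau(\bar\chi)/\phi(N)$ rather than a variant. A secondary obstacle is justifying that modulo $\gp_f$ (i.e.\ modulo $(X)$ after using Lemma~\ref{cup_prod_modulo_pf}) the factor $\zeta_{p^r}$ in $1-\zeta_N^{p^{-r}}\zeta_{p^r}$ can be suppressed in the second argument without affecting the $\chi$-projection — this should follow because the part of the symbol sensitive to $\zeta_{p^r}$ lands in the finite module $X_{Np,p}^{(\chi)}/(X)$, which the proof of Lemma~\ref{cup_prod_modulo_pf} shows is killed in $S'_\chi(1)$. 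Once these normalization issues are settled, the identity is essentially a repackaging of the Iwasawa--Coleman computation of $L_p(s,\psi)$ via cyclotomic units, specialized and twisted appropriately.
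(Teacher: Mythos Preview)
Your overall plan---apply Iwasawa's explicit reciprocity law at $p$, extract $L_p(1,\psi)$ from the $p$-adic logarithms of the cyclotomic units $1-\zeta_{p^r}^a$, and recover the Gauss sum and the factor $\phi(N)^{-1}$ from the $\chi$-averaging over $(\Z/N\Z)^\times$---matches the paper's proof. But the mechanism you propose for producing $L_p(0,\chi\omega_p\bar\psi)$ is wrong, and the error is not just bookkeeping.

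First, a small but telling slip: $1-\zeta_{p^r}^a$ is \emph{not} a local unit at $p$; it is a uniformizer of $\Q_p(\zeta_{p^r})$. What becomes a principal unit is the $\bar\psi$-weighted combination $\prod_a(1-\zeta_{p^r}^a)^{\bar\psi(a)}$, because $\sum_a\bar\psi(a)=0$. More seriously, your idea of ``suppressing $\zeta_{p^r}$'' in the second argument $1-\zeta_N^{p^{-r}}\zeta_{p^r}$ and reducing to $1-\zeta_N^{p^{-r}}$ would destroy the computation. Iwasawa's explicit reciprocity law (with $\lambda(X)=1+X$, as in \cite[Theorem~8.18]{iwasawa86} or \cite[Theorem~I.4.2]{deS}) evaluates the symbol via the logarithmic derivative of the second argument \emph{with respect to} $X=\zeta_{p^r}-1$: for $\beta(X)=1-\zeta_N^{g_N}(1+X)^{\omega_p(g_p)}$ one gets the factor $\omega_p(g_p)\dfrac{\zeta_N^{g_N}\zeta_{p^r}^{\omega_p(g_p)}}{\zeta_N^{g_N}\zeta_{p^r}^{\omega_p(g_p)}-1}$. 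If the second argument were constant in $\zeta_{p^r}$, this derivative would vanish and the symbol would be zero. So the $\zeta_{p^r}$ in the second slot is not noise landing in the finite piece $X_{Np,p}^{(\chi)}/(X)$; it is the entire source of the nontrivial value.

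The factor $L_p(0,\chi\omega_p\bar\psi)$ does not come from ``Stickelberger-type relations for $1-\zeta_N$.'' In the paper's proof one computes the trace $\Tr_{\Q_p(\mu_{p^r})/\Q_p}$, then expands the logarithmic derivative via
\[
\frac{\zeta_N^{G_N}\zeta_{p^r}^{G_{p^r}}}{\zeta_N^{G_N}\zeta_{p^r}^{G_{p^r}}-1}=\sum_{\alpha\in\Z/Np^r\Z}\zeta_{\alpha(Np^r)}(0)\,(\zeta_N^{G_N}\zeta_{p^r}^{G_{p^r}})^{\alpha}
\]
(this is \cite[Lemma~4.1]{shih-wang}), and only then does the character sum over $G_N,G_{p^r}$ produce $\tau(\bar\chi)\tau(\psi)$ together with $\sum_\alpha\chi\bar\psi(\alpha)\zeta_{\alpha(Np^r)}(0)=L(0,\chi\bar\psi)=L_p(0,\chi\omega_p\bar\psi)$. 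This coupling between the $\zeta_{p^r}$-dependence of the second argument and the trace is the missing idea in your outline.
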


\begin{proof}
We follow the proof of \cite[Theorem~4.2]{shih-wang}. The action of $(\Z/p\Z)^{\times}$ on the group of $p^r$th roots of unity  is given by $\alpha\cdot \zeta_{p^r}=\zeta_{p^r}^{\omega(\alpha)}$ for all $\alpha\in (\Z/p\Z)^{\times}$. Then, one has
\[
\begin{split}
& \sum_{a\in (\Z/p^r\Z)^{\times}} \bar\psi(a) (1-\zeta_{p^r}^a ,1-\zeta_N^{p^{-r}}\zeta_{p^r})_{r,\theta}\\
= & \phi(Np)^{-1}\sum_{g_N,g_p}\chi(g_N^{-1})\omega_p(g_p^{-1}) \sum_{a\in (\Z/p^r\Z)^{\times}} \bar\psi(a)(1-\zeta_{p^r}^{a\omega_p(g_p)} ,1-\zeta_N^{p^{-r}g_N}\zeta_{p^r}^{\omega_p(g_p)})_{r}\\
=& \phi(Np)^{-1}\sum_{g_N,g_p}\chi(g_N^{-1})\omega_p(g_p^{-1}) \psi(\omega_p(g_p)) \sum_{a\in (\Z/p^r\Z)^{\times}} \bar\psi(a) (1-\zeta_{p^r}^a ,1-\zeta_N^{p^{-r}g_N}\zeta_{p^r}^{\omega_p(g_p)})_{r}.
\end{split}
\]
Here $g_N$ and $g_p$ run through all elements in $(\Z/N\Z)^{\times}$ and $(\Z/p\Z)^{\times}$, respectively.
By changing of variable and using the assumption that $\chi(p)=1$, one can rewrite the above summation as
\begin{equation}\label{eq:local_paring_1}
 \phi(Np)^{-1}\sum_{g_N,g_p}\chi(g_N^{-1})\omega_p(g_p^{-1})\psi(\omega_p(g_p)) \sum_{a\in (\Z/p^r\Z)^{\times}} \bar\psi(a)(1-\zeta_{p^r}^a ,1-\zeta_N^{g_N}\zeta_{p^r}^{\omega_p(g_p)})_{r}.
\end{equation}

Since $\prod_a \bar\psi(a) (1-\zeta_{p^r}^a)$ is a $p$-adic unit, by the local explicit reciprocity law \cite[Theorem~8.18]{iwasawa86} or \cite[Theorem~I.4.2]{deS} (taking  $\lambda(X)=1+X$) one can write \eqref{eq:local_paring_1} as 
$$
 \frac{1}{p^r \phi(Np)}\sum_{G_N,g_p}\chi(G_N^{-1})\omega_p(g_p^{-1})\psi(\omega_p(g_p)) \Tr\left( \log_p( \prod_a\bar\psi(a)(1-\zeta_{p^r})) \cdot \omega_p(g_p) \frac{\zeta_N^{G_N}\zeta_{p^r}^{\omega_p(g_p)}}{\zeta_N^{G_N}\zeta_{p^r}^{\omega_p(g_p)}-1}\right),
$$
where $\Tr$ is the trace $\Tr_{\Q_p(\mu_p^r)/\Q_p}$. By computing the trace, we can simply it as
\begin{equation}\label{eq:comp_tr}
\frac{(p-1)}{p^r \phi(Np)}(\sum_a \bar\psi(A)\log_p(1-\zeta_{p^r}^a))\sum_{G_N,G_{p^r}}\chi(G_N^{-1})\psi(G_{p^r}) \frac{\zeta_N^{G_N}\zeta_{p^r}^{G_{p^r}}}{\zeta_N^{G_N}\zeta_{p^r}^{G_{p^r}}-1},
\end{equation}
where $G_N$ and $G_{p^r}$ run through elements in $(\Z/N\Z)^{\times}$ and $(\Z/p^r\Z)^{\times}$, respectively.
Recall that \cite[Lemma~4.1]{shih-wang} asserts
$$
\frac{\zeta_N^{G_N}\zeta_{p^r}^{G_{p^r}}}{\zeta_N^{G_N}\zeta_{p^r}^{G_{p^r}}-1}=\sum_{\alpha\in \Z/Np^r \Z} \zeta_{a(Np^r)}(0)(\zeta_N^{G_N}\zeta_{p^r}^{G_{p^r}})^\alpha,
$$
where $\zeta_{a(Np^r)}(0)$ is the special value of the partial Riemann zeta function at $s=0$. A direction computation shows
\begin{equation}\label{eq:main_eq}
\sum_{G_N,G_{p^r}}\chi(G_N^{-1})\psi(G_{p^r})
\zeta_{a(Np^r)}(0)(\zeta_N^{G_N}\zeta_{p^r}^{G_{p^r}})^{\alpha}
=\begin{cases}
\tau(\chi^{-1})\tau(\psi)\chi\bar\psi(a)\zeta_{\alpha (Np^r)}(0) & \mbox{ if } (a,Np)=1\\
0 & \mbox{ otherwise}.
\end{cases}
\end{equation}
Hence, we see that \eqref{eq:comp_tr} equals to
$$
\frac{\tau(\chi^{-1})\tau(\psi)}{p^r \phi(N)}(\sum_a \bar\psi(A)\log_p(1-\zeta_{p^r}^a)) \sum_{\alpha\in \Z/Np^r \Z} \chi\bar\psi(a)\zeta_{\alpha (Np^r)}(0).
$$
Then, the assertion follows from the facts 
$$
L_p(1,\psi)=-(1-\frac{\psi(p)}{p})\tau(\psi)p^{-r} \sum_{\alpha\in \Z/p^r\Z} \bar\psi(\alpha) \log_p(1-\zeta_{p^r}^\alpha)\; \mbox{  (\cite[Theorem~5.18]{washington-GTM83})}
$$
and 
$$
\sum_{\alpha\in \Z/Np^r \Z} \chi\bar\psi(a)\zeta_{\alpha (Np^r)}(0)
=L(0,\chi\bar\psi)=L_p(0,\chi\omega_p\bar\psi).\qedhere
$$
\end{proof}

The following corollary is a direct consequence of Proposition~\ref{local_cup_prod} by writing $\zeta_{Np^r}$ in terms of $\zeta_{N}^{p^{-r}}\zeta_{p^r}$. 

\begin{cor}\label{local_cup_prod_2}
For any non-trivial even character $\psi=\omega_p^i\psi_p$ of $\Z_p^{\times}$ with conductor $p^r$, we have
$$
\psi(\cC)'=-\frac{\tau(\chi^{-1})\omega_p(N^{-1})}{\phi(N)}L_p(1,\psi)L_p(0,\chi\omega_p\bar\psi).
$$
\end{cor}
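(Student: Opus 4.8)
The plan is to deduce Corollary~\ref{local_cup_prod_2} from Proposition~\ref{local_cup_prod} by a purely formal change of variables, since the two statements differ only in the root of unity placed in the second slot of the cup product. By \eqref{eq:image_MKpadicL_under_varpi}--\eqref{eq:image_twovar_padi_L_under_varpi} together with the description of $\varpi_{\theta}$ on Manin symbols, one has
$$
\psi(\cC_{\theta})=\sum_{a\in(\Z/p^r\Z)^{\times}}\bar\psi(a)\,(1-\zeta_{p^r}^a,\,1-\zeta_{Np^r})_{r,\theta},
$$
where $\zeta_{p^r}=\zeta_{Np^r}^N$ and $\zeta_N=\zeta_{Np^r}^{p^r}$ for the fixed compatible system of primitive $Np^r$-th roots of unity, whereas Proposition~\ref{local_cup_prod} computes the same sum with $1-\zeta_N^{p^{-r}}\zeta_{p^r}$ in place of $1-\zeta_{Np^r}$. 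So the content of the corollary is to compare these two cup products and track the resulting constant.

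First I would record the elementary identity $\zeta_{Np^r}=\zeta_N^{p^{-r}}\zeta_{p^r}^{N^{-1}}$, with exponents taken modulo $N$ and modulo $p^r$ respectively; this is just the Chinese Remainder Theorem applied to the two projections $\zeta_{Np^r}\mapsto\zeta_N$ and $\zeta_{Np^r}\mapsto\zeta_{p^r}$. Consequently the automorphism $\sigma\in\Gal(\Q(\zeta_{Np^r})/\Q(\zeta_N))$ determined by $\sigma(\zeta_{p^r})=\zeta_{p^r}^{N^{-1}}$ carries $1-\zeta_N^{p^{-r}}\zeta_{p^r}$ to $1-\zeta_{Np^r}$ and carries $1-\zeta_{p^r}^a$ to $1-\zeta_{p^r}^{aN^{-1}}$. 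Applying $\sigma$ to the left-hand side of Proposition~\ref{local_cup_prod} and then reindexing the summation variable $a\mapsto aN$ identifies that expression with $\psi(\cC_{\theta})$ up to an explicit root-of-unity factor coming from the reindexing.

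Next I would push this identity through the surjection $S_{\theta}\twoheadrightarrow S'_{\chi}(1)$ of the exact sequence $0\to X_{Np,p}^{(\chi)}(1)\to S_{\theta}\to S'_{\chi}(1)\to 0$. The point is that $\sigma$ acts on $S'_{\chi}(1)$ by a scalar: the diamond action of $\Gal(\Q(\zeta_{Np^{\infty}})/\Q)=(\Z/Np\Z)^{\times}\times(1+p\Z_p)$ on $S_{\theta}$, hence on the quotient $S'_{\chi}(1)$, is through $\theta=\chi\omega_p$ on the first factor and through the $\Lambda_{\cO}$-module structure on $1+p\Z_p$; since $S'_{\chi}(1)$ has characteristic ideal $(X)$ and is free of rank one over $\cO=\Z_p[\chi]$ by \cite[Proposition~3.2]{shih-wang}, the generator $u\in 1+p\Z_p$ (equivalently $1+X$) acts trivially, so $\sigma$ acts simply as multiplication by $\theta(\sigma\bmod Np)=\chi(1)\omega_p(N^{-1})=\omega_p(N^{-1})$. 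Feeding in the image of the left-hand side of Proposition~\ref{local_cup_prod} in $S'_{\chi}(1)$, namely $-\tfrac{\tau(\bar\chi)}{\phi(N)}L_p(1,\psi)L_p(0,\chi\omega_p\bar\psi)$, and using $\tau(\bar\chi)=\tau(\chi^{-1})$, gives the asserted formula for $\psi(\cC)'$.

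I expect the only genuine difficulty to be the bookkeeping of normalizations: matching the compatible system $(\zeta_{Np^r})_r$ implicit in $\varpi_{\theta}$ against the ad hoc roots of unity used in Proposition~\ref{local_cup_prod}, checking that the Galois twist acts on the correct variable of the cup product and commutes with the projector to the $\theta$-component, and confirming that the root-of-unity factors contributed by the reindexing and by the $\theta$-action on $S'_{\chi}(1)$ collapse to exactly $\omega_p(N^{-1})$. Beyond Proposition~\ref{local_cup_prod} itself no new ingredient is needed; once these conventions are pinned down the argument is entirely formal.
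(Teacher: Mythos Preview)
Your proposal is correct and follows essentially the same approach as the paper: the paper's proof is a one-line citation to \cite[Corollary~4.3]{shih-wang}, and the sentence introducing the corollary already says it is ``a direct consequence of Proposition~\ref{local_cup_prod} by writing $\zeta_{Np^r}$ in terms of $\zeta_N^{p^{-r}}\zeta_{p^r}$,'' which is precisely your change of variables. Your Galois-equivariance formulation (together with the observation that $1+p\Z_p$ acts trivially on $S'_{\chi}(1)$ because its characteristic ideal is $(X)$) is a clean way to carry this out, and you are right that the only content beyond Proposition~\ref{local_cup_prod} is tracking the unit factors from the reindexing $a\mapsto aN$ and from the $\theta$-action of $\sigma$.
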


\begin{proof}
The argument is the same as the proof of \cite[Corollary~4.3]{shih-wang}.
\end{proof}



\section{Selmer groups}\label{sec:sel_gp}
The goal of this section is to review the definition and to discuss the properties of Greenberg's (strict) Selmer groups. There are three ingredients play important role in the proof of \eqref{eq:inv_higher_wt}. First is the choice of the lattice $T_g$ in the definition of Selmer groups. This will be addressed in Section~\ref{sec:choice_of_lattice}. Second is Proposition~\ref{sel_no_fin_submod} asserting that the Pongyagin dual of Selmer groups do not have submodule of finite index, which allows us to reduce the proof of \eqref{eq:inv_higher_wt} to the computation of residual Selmer groups. In Section~\ref{sec:res_sel_gp}, we study the difference between Selmer groups and residual Selmer groups.

In this section, we let $g=\sum_{n\geq 1} a_n(g) q^n\in S_k(\Gamma_1(Np),\varphi,\bar\Q_p)$ be an ordinary normalized eigenform with $k\geq 1$, where $\varphi$ is a primitive character of conductor $N$ or $Np$.  Recall that we denote by $K_g$ the Hecke field of $g$ over $\Q_p$, by $\cO_g$ the ring of integers of $K_g$, by $\pi_g$ a uniformizer of $\cO_g$, by $\F:=\cO_g/\pi_g$ the residue field of $\cO_g$. Also recall that we denote by $\rho_g:\rG_{\Q}\to \Aut(V_g)$ the corresponding ordinary Galois representation, where $V_g$ is a $2$-dimensional vector space over $K_g$.

\subsection{Definition of Selmer groups}\label{sec:def_sel_gp}
 Let $T_g\subset V_g$ be a $\rG_{\Q}$-stable $\cO_g$-lattice. Since $g$ is ordinary, for $0\leq j \leq p-1$, we have an exact sequence of $\cO_g[\rG_{\Q_p}]$-modules \cite[Theorem~4.2.7(2)]{hida12}
\begin{equation}\label{eq:ord_fil}
0\to \cO_g(\eta^{-1}\varphi\kappa_p^{k-1}\omega_p^j)\to T_{g,j}:=T_g\otimes \omega_p^j \to \cO_g(\eta\omega_p^j)\to 0,
\end{equation}
where $\eta:\rG_{\Q_p}\to \cO_g^{\times}$ is an unramified character sending $\Frob_p$ to $a_p(g)$. Set $A_{g,j}:=V_{g,j}/T_{g,j}\isom (K_g/\cO_g)^2$ endowed with an action of $\rG_{\Q}$, where $V_{g,j}:=V_g\otimes \omega_p^j$. Also, set $A'_{g,j}:=(K_p/\cO_g)(\eta^{-1}\varphi\kappa_p^{k-1}\omega_p^j)$ and $A''_{g,j}:=(K_p/\cO_g)(\eta\omega_p^j)$. Then, the exact sequence \eqref{eq:ord_fil} yields an exact sequence of $(K_g/\cO_g)[\rG_{\Q_p}]$-modules
\begin{equation}\label{eq:ord_cond}
0\to A'_{g,j} \to A_{g,j}\to A''_{g,j}\to 0.
\end{equation}

For a finite place $v$ of $\Q_{\infty}$, we denote by $\Q_{\infty,v}$ the completion of $\Q_{\infty}$ at $v$ and denote by $I_{\infty,v}$ the inertia subgroup of $\rG_{\Q_{\infty,v}}$ Following Greenberg \cite{greenberg89}, the Selmer group of $g$ twisted by $\omega^j_p$ over $\Q_{\infty}$ is defined as
\begin{equation}\label{eq:def_sel_gp}
\begin{split}
\Sel(\Q_{\infty},A_{g,j})
:=& \ker(H^1(\Q_{\infty},A_{g,j})\to \prod_{v\nmid p} H^1(\rI_{\infty,v},A_{g,j})\times \cH_p(A_{g,j}))\\
\simeq & \ker(H^1(\rG_{\Q_{\infty},\Sigma_{Np}},A_{g,j})\xrightarrow{\gamma} \prod_{v\in \Sigma_{Np},v\nmid p} H^1(\rI_{\infty,v},A_{g,j}) \times \cH_p(A_{g,j})).
\end{split}
\end{equation}
where $\Sigma_{Np}$ is the set of finite places of $\Q_{\infty}$ above $Np$, and $\cH_p(A_{g,j})$ is defined as
$$
\cH_p(A_{g,j}):=
\mathrm{im}(H^1(\Q_{\infty,w},A_{g,j})\to H^1(\rI_{\infty,w}, A''_{g,j}))
\simeq H^1(\Q_{\infty,w},A_{g,j})/L_w, 
$$
where $w$ is the unique prime of $\Q_{\infty}$ above $p$ and 
$$
L_w=\ker(H^1(\Q_{\infty,w},A_{g,j})\to H^1(\rI_{\infty,w},A''_{g,j}). 
$$
Notice that the set $\Sigma_{Np}$ is not require to contain infinity places as the prime $p$ is odd. If we replace $\rI_{\infty,w}$ by $\Q_{\infty,w}$ in the definition of $\cH_p(A_{g,j})$, this would definite possible smaller Selmer group, called \textit{strict} Selmer group and denote by $\Sel^{\mathrm{st}}(\Q_{\infty},A_{g,j})$. 


The (strict) Selmer group is a discrete $\cO_g$-module with natural $\Gamma:=\Gal(\Q_{\infty}/\Q)$ action and hence, a $\Lambda_{\cO_g}$-module, where $\Lambda_{\cO_g}=\cO_{g}\lsem \Gamma \rsem\simeq \cO_g\lsem X \rsem$. As in the Introduction, let $\Sel(\Q_{\infty},A_{g,j})^{\vee}$ 
denote the Pontryagin dual of $\Sel(\Q_{\infty},A_{g,j})$, which is a finitely generated $\Lambda_{\cO_g}$-module. Recall that the structure theory of finitely generated Iwasawa modules \cite[Theorem~13.12]{washington-GTM83} asserts that if $\Sel(\Q_{\infty},A_{g,j})^{\vee}$ is torsion, then it is pseudo-isomorphic to $\bigoplus_j \Lambda_{\cO_g}/(\pi_g^{n_j}) \oplus \bigoplus_i \Lambda_{\cO_g}/(f_i(X))$ for some $n_j\in \Z_{\geq 0}$ and for some distinguished polynomials $f_i(X)\in \Lambda_{\cO_g}$. The characteristic polynomial of $\Sel(\Q_{\infty},A_{g,j})^{\vee}$ over $\Lambda_{\cO_g}$, denote by $\Char_{\Lambda_{\cO_g}} \Sel(\Q_{\infty},A_{g,j})^{\vee}$, is defined as 
\begin{equation}\label{eq:char_poly}
\Char_{\Lambda_{\cO_g}}\Sel(\Q_{\infty},A_{g,j})^{\vee}:=\prod_j \pi_g^{n_j}\times \prod_i f_i(X). 
\end{equation}
Following \eqref{eq:invarariant_power_series}, we define algebraic $\mu$- and $\lambda$- invariants of $g$ by
\begin{equation}\label{eq:def_alg_inv}
\mu^{\alg}(A_{g,j}):=\mu(\Char_{\Lambda_{\cO}}\Sel(\Q_{\infty},A_{g,j})^{\vee}) \mbox{ and } \lambda^{\alg}(A_{g,j}):=\lambda(\Char_{\Lambda_{\cO}}\Sel(\Q_{\infty},A_{g,j})^{\vee}).   
\end{equation}

\subsection{Choice of lattices}\label{sec:choice_of_lattice}
In this subsection, we fix our choice $T_g$ and show that the obtained residual representation is semisimple. This is crucial for us to compute the residual Selmer groups in Section~\ref{sec:alg_inv}.

We first briefly review  the Galois representation associated with cohomology of modular curves following \cite[Section~1.7.14]{FK} and refer the reader to \textit{loc.~cit.} for more details. Recall in Section~\ref{sec:MK_p_adic_L} that $H$ denotes the project limits of the ordinary component of the cohomology of modular curves. It admits an exact sequence of $\rG_{\Q_p}$-modules
$$
0\to H_{sub}\to H \to H_{quo}\to 0,
$$
and the action of $\rG_{\Q}$ on $H\otimes_{\gh} Q(\gh)$ yields a big Galois representation $\rho_{Q(\gh)}$ satisfying the following properties
\begin{itemize}
    \item[(G1)] $\Tr(\rho_{Q(\gh)}(\mathrm{Frob}_{\ell}))=T^*(\ell)$ and $\det(\rho_{Q(\gh)}(\mathrm{Frob}_{\ell}))=\ell\<\ell\>^{-1}$ for all $l\nmid Np$. Here $\mathrm{Frob_{\ell}}$ denotes the arithmetic Frobenius at $\ell$ and $\<\ell\>$ is the image of $\ell$ under the natural embedding $(\Z/N\Z)^{\times} \times \Z_p^{\times} \embed \cO\lsem (\Z/N\Z)^{\times}\times \Z_p^{\times} \rsem$.
    
    \item[(G2)] For all $\sigma\in \rG_{\Q}$, $\det(\rho_{Q(\gh)}(\sigma))=\kappa_p(\sigma)^{-1}\<\sigma\>^{-1}$.
    
    \item[(G3)] The action of $\rG_{\Q_p}$ on $H_{quo}(1)$ is unramified and $\mathrm{Frob}_p$ acts as $U^*(p)$.
    
    \item[(G4)] The action of $\rG_{\Q_p}$ on $H_{sub}(1)$ is given by $\kappa_p(\sigma)\<\sigma\>^{-1}$.
    
    \item[(G5)] The action of $\rG_{\Q}$ on $H^-(1)/IH^-(1)$ is trivial and on $H^+(1)/IH^+(1)$ is via $\kappa_p(\sigma)\<\sigma\>^{-1}$ (\cite[Proposition~6.3.2]{FK}).
\end{itemize}

Recall that the universal cuspidal Hecke algebra $\gh_{\theta,\gm}$ is assumed to be Gorenstein and that $\gh_{\theta,\gm}$ is a $\Lambda:=\Z_p[\theta]\lsem X \rsem$-module. Then the Galois representation $\rho_{Q(\gh_{\theta,\gm})}$ induces a Galois representation
\[
\rho_{\gh_{\theta,\gm}}: \rG_{\Q}\rightarrow \GL(H_{\theta,\gm})\cong\GL_2(\gh_{\theta,\gm}).
\]
Thus, for any cusp form $g$ of weight $k\geq 1$ in a Hida family corresponding to a minimal prime ideal of $\gh_{\theta,\gm}$, we take $T_{g,j}$ to be $H_{\theta,\gm}(1)/\gp_g \otimes \omega_p^j$ and the ordinary filtration \eqref{eq:ord_cond} is taken to be
\begin{equation}\label{eq:ord_fil_weigh_k_form}
0\to (H_{sub,\theta,\gm}(1)/\gp_g)\otimes \omega_p^j \to T_{g,j} \to (H_{quo,\theta,\gm}(1)/\gp_g)\otimes \omega_p^j \to 0.
\end{equation}
From the above discussion, it makes sense to consider $\rho_{\gh_{\theta,\gm}} \bmod \gm$ in this situation.

\begin{prop}\label{residual_repr_is_semisimple}
Suppose that the Hecke algebra $\gh_{\theta,\gm}$ is Gorenstein.
Let $\{e_-,e_+\}$ be a fixed basis of $(H_{\theta,\gm}(1)^-\oplus H_{\theta,\gm}(1)^+)/\gm$ over $\gh_{\theta,\gm}/\gm$. Then under the basis $\{e_-,e_+\}$, the action of $\rG_{\Q}$ on $A_{g,j}[\pi_g]$ is given by $\left(\begin{smallmatrix} \omega_p^j & 0 \\ 0 & \chi^{-1}\omega_p^j \end{smallmatrix}\right)$ and the $\rG_{\Q_p}$-module $A'_{g,j}[\pi_g]$ is generated by $e_-+e_+$. 
\end{prop}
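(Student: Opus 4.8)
The plan is to combine three inputs: the properties (G1)--(G5) of the big Galois representation $\rho_{\gh_{\theta,\gm}}$, the fact that $\gp_g$ (for $g$ in a Hida family corresponding to a minimal prime of $\gh_{\theta,\gm}$) contains the Eisenstein ideal $I_\theta$, and the Gorenstein hypothesis which makes $H_{\theta,\gm}$ free of rank two over $\gh_{\theta,\gm}$. First I would reduce everything modulo $\gm$: since $\gp_g \subset \gm$ and $\pi_g$ generates the maximal ideal of $\cO_g = \gh_{\theta,\gm}/\gp_g$, the module $A_{g,j}[\pi_g]$ is identified with $(H_{\theta,\gm}(1)/\gm)\otimes \omega_p^j$, a two-dimensional $\F$-vector space. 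Writing $H_{\theta,\gm}/\gm = (H^-_{\theta,\gm}\oplus H^+_{\theta,\gm})/\gm$ and using (G5): the $\rG_\Q$-action on $H^-(1)/IH^-(1)$ is trivial and on $H^+(1)/IH^+(1)$ is via $\kappa_p(\sigma)\<\sigma\>^{-1}$. Twisting by $\omega_p^j$ and untwisting the Tate twist, and using that on the Eisenstein component $\<\sigma\>^{-1}$ specializes so that $\kappa_p(\sigma)\<\sigma\>^{-1}$ becomes the character $\chi^{-1}$ (this is exactly the content of $I_\theta$ containing $T^*_\ell - \ell\theta^{-1}(\ell) - 1$ together with $\det \rho_{\gh} = \kappa_p^{-1}\<\,\>^{-1}$ and $\theta = \chi\omega_p$), one gets that on the basis $\{e_-,e_+\}$ the residual action is diagonal $\left(\begin{smallmatrix}\omega_p^j & 0 \\ 0 & \chi^{-1}\omega_p^j\end{smallmatrix}\right)$. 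The key point making this an honest direct sum rather than an extension is the Gorenstein hypothesis, which forces $H_{\theta,\gm}$ to be free over $\gh_{\theta,\gm}$ and hence the reduction to split as $\gh_{\theta,\gm}/\gm$-module respecting the $\pm$-decomposition; semisimplicity of the residual representation then follows because the two diagonal characters $\omega_p^j$ and $\chi^{-1}\omega_p^j$ are distinct (as $\chi$ is odd primitive of conductor $N > 1$, so $\chi \not\equiv 1 \bmod \pi$).

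For the second assertion — that the $\rG_{\Q_p}$-stable line $A'_{g,j}[\pi_g]$ is spanned by $e_- + e_+$ — I would use the ordinary filtration \eqref{eq:ord_fil_weigh_k_form}: $A'_{g,j}[\pi_g] = (H_{sub}(1)/\gm)\otimes\omega_p^j$, a one-dimensional subspace of $A_{g,j}[\pi_g]$. By (G4) the $\rG_{\Q_p}$-action on $H_{sub}(1)$ is via $\kappa_p(\sigma)\<\sigma\>^{-1}$, while by (G3) the action on $H_{quo}(1)$ is unramified with $\Frob_p \mapsto U^*(p)$, which modulo $I_\theta$ is $1$ (since $U^*_q - 1 \in I_\theta$). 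Thus the sub-line carries the character $\kappa_p\<\,\>^{-1}$ and the quotient is unramified-trivial. The subtlety is that we need to locate this sub-line inside $(H^-\oplus H^+)/\gm$ in terms of the basis $\{e_-,e_+\}$, and the claim is that it is the antidiagonal vector $e_- + e_+$. The natural way to see this is to invoke the structure of $H^\pm_{\theta,\gm}$ as $\gh_{\theta,\gm}$-modules together with the compatibility of the $\pm$-decomposition (eigenspaces for complex conjugation) with the ordinary filtration: complex conjugation acts on $A'_{g,j}$ and $A''_{g,j}$, and since $\dim A'_{g,j}[\pi_g] = 1$ while both $e_-$ and $e_+$ individually are eigenvectors for $\rG_\Q$ (hence for complex conjugation) with \emph{opposite} signs, the line $A'_{g,j}[\pi_g]$ — which is $\rG_{\Q_p}$-stable but cannot be $\rG_\Q$-stable, as the residual representation is semisimple with non-isomorphic constituents and is \emph{not} $p$-distinguished so the ordinary line is not a Galois-stable line globally — must be transverse to both coordinate lines, and after rescaling $e_\pm$ one may normalize it to be $e_- + e_+$.

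The main obstacle I expect is the second part: pinning down that the ordinary $\rG_{\Q_p}$-line is exactly $e_- + e_+$ rather than some other non-coordinate line $e_- + c\, e_+$ with $c \in \F^\times$. Resolving this requires either (a) a careful analysis of how the local ordinary filtration of $H_{\theta,\gm}$ interacts with the global $\pm$-splitting — likely using that after reducing mod $I_\theta$ the cohomology $H_{\theta,\gm}/I_\theta \cong \gh_{\theta,\gm}/I_\theta$ and the explicit description of $H^\pm$ in terms of boundary/cuspidal parts of the modular curve cohomology from \cite[Section~6]{FK} — or (b) simply redefining the basis: once we know the line is $e_- + c\,e_+$ for some unit $c$, we replace $e_+$ by $c\,e_+$, which does not disturb the diagonal form of the residual representation (it only rescales a basis vector within an eigenline). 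In the write-up I would adopt approach (b) as the clean route, noting that the freedom in choosing $\{e_-,e_+\}$ in the statement (it is only asserted to exist, not to be canonical) absorbs the constant, so the real content is the two diagonal characters plus the fact that the ordinary line is transverse to both — and transversality is forced by the representation being semisimple, non-split-compatible with the ordinary filtration, precisely the non-$p$-distinguished phenomenon. I would also double-check the signs/normalization of the Tate twists so that the characters come out as $\omega_p^j$ and $\chi^{-1}\omega_p^j$ and not their inverses or some cyclotomic shift, cross-referencing the conventions in (G1)--(G5) and the twist in \eqref{eq:ord_fil_weigh_k_form}.
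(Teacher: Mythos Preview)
Your argument for the first assertion (the diagonal form of the residual action) is correct and essentially matches the paper's approach: both use property (G5) to read off the $\rG_\Q$-action on $H^\pm_{\theta,\gm}(1)/\gm$, with the Eisenstein congruence pinning down the characters. The paper additionally cites \cite[Theorem~1.5]{shih-wang} for semisimplicity of $\rho_{\gh_{\theta,\gm}}\bmod \gp_1$, but your observation that the two diagonal characters $\omega_p^j$ and $\chi^{-1}\omega_p^j$ are distinct (since $\chi$ is a nontrivial primitive character mod $N$) already suffices at the residual level.

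There is, however, a genuine gap in your argument for the second assertion. You try to deduce that the ordinary line $A'_{g,j}[\pi_g]$ is transverse to both $\langle e_-\rangle$ and $\langle e_+\rangle$ from the fact that it ``cannot be $\rG_\Q$-stable''. But this is exactly the non-$p$-distinguished phenomenon working against you: since $\chi$ is unramified at $p$ with $\chi(p)=1$, the two residual characters $\omega_p^j$ and $\chi^{-1}\omega_p^j$ coincide on $\rG_{\Q_p}$, so $\rG_{\Q_p}$ acts by a \emph{scalar} on $A_{g,j}[\pi_g]$ and \emph{every} line is $\rG_{\Q_p}$-stable. Consequently the residual data alone carries no information whatsoever about which line is $H_{sub}(1)/\gm$; in particular nothing prevents it from being $\langle e_-\rangle$ or $\langle e_+\rangle$. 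Your proposed fix (b) --- rescaling $e_+$ --- only works once transversality is already known, and your fix (a) is not worked out.

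The paper resolves this by a different route: it invokes \cite{BDP}, where the geometry of the eigencurve at the weight one point $f$ is analyzed and it is shown that the lattice $T_f = H_{\theta,\gm}/\gp_f$ is unique up to homothety with the $\rG_{\Q_p}$-stable line explicitly generated by $e_- + e_+$. This is genuinely extra input (coming from the first-order deformation theory at $f$, not from the residual representation), and is precisely what is needed to pin down the ordinary line in the non-$p$-distinguished situation. Since $\gp_f \subset \gm$, this then determines $H_{sub,\theta,\gm}(1)/\gm$ as claimed.
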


\begin{proof}
It follows from \cite[Theorem~1.5]{shih-wang} that the action of $\rG_{\Q}$ on the lattice $T_f:=(H_{\theta,\gm}(1)/\gp_f)=(H^-_{\theta,\gm}(1)\oplus H^+_{\theta,\gm}(1))/\gp_f$ is semisimple, and  hence, so is the action of $\rG_{\Q}$ on $(H^-_{\theta,\gm}(1)\oplus H^+_{\theta,\gm}(1))/\gm$. The action of $\rG_{\Q}$ under the basis $\{e_-,e_+\}$ can be examined from action on $(H_{\theta,\gm}^-(1)\oplus H_{\theta,\gm}^+(1))/\gm$ described in $\mathrm{(G5)}$.  

To see the basis of $A'_{g,j}[\pi]$, from the discussion in \cite[Section~3.4.1]{darmon-pozzi-vonk}, we know that if the Galois representation of $f$ is semisimple under a complex conjugation basis $\{e'_-,e'_+\}$, then the ordinary $\rG_{\Q_p}$-stable is generated by $e'_-+e'_+$. Moreover, from the computation in the proof of Lemma~3.9 of \textit{op.~cit.}~ and the fact that the localization and completion of $\gh_{\theta,\gm}$ at $\gp_f$ is $\bar{\Q}_p\lsem X \rsem$ (\cite[Theorem~A(i)]{BDP}), we know that the Galois representation $\rG_{\Q}\xrightarrow{\rho_{\gh_{\theta,\gm}}} \GL_2(\gh_{\theta,\gm})\to \GL_2(\bar{\Q}_p\lsem X \rsem /(X^2)$ is of the form is $(1+\left(\begin{smallmatrix} a & b \\ c & d \end{smallmatrix}\right)X)\left(\begin{smallmatrix} 1 & 0 \\ 0 & \chi^{-1} \end{smallmatrix}\right)$, where $a,b,c,d$ not nonzero function on $\rG_{\Q}$. Hence, $\rG_{\Q}$-stable lattices in $V_f$, on which the action of $\rG_{\Q}$ is semisimple, is unique up to homothety. Thus, the ordinary $\rG_{\Q_p}$-stable line $H_{sub,\theta,\gm}(1)/\gp_f$ is generated by $e_-+e_+$ and hence, so is $H_{sub,\theta,\gm}(1)/\gm$.  
\end{proof}

\begin{remark}
The assumption of $\gh_{\theta,\gm}$ being Gorenstein is essential. Without this assumption, we only have a GMA $\left(\begin{smallmatrix} \gh_{\theta,\gm} & B \\ C & \gh_{\theta,\gm} \end{smallmatrix}\right)$, where $B$ and $C$ are fractional ideals in $Q(\gh_{\theta,\gm})$ respectively generated by values of $b(g)$ and $c(g)$ for all $g\in \rG_{\Q}$.
\end{remark}


To end this subsection, we show the following lemma which will be used in the proof of Proposition~\ref{sel_no_fin_submod}.

\begin{lemma}\label{cond_show_surj_gamma}
Suppose that the Hecke algebra $\gh_{\theta,\gm}$ is Gorenstein. Set $A^*_{g,j}:=\Hom(A_{g,j},\mu_{p^{\infty}})$.
\begin{enumerate}
\item The group $H^0(\Q_{\infty},A_{g,j}^*)=0$ if and only if $j\neq 1$ 

\item Then, one has $\dim_{\F} H^0(\Q_{\infty},A_{g,j}[\pi_g])=1$ if $j=0$ and $0$ if $j\neq 0$.
\end{enumerate}
\end{lemma}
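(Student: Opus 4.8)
\textbf{Proof proposal for Lemma~\ref{cond_show_surj_gamma}.}

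The plan is to compute these cohomology groups directly from the explicit description of the Galois action on $A_{g,j}$ and $A_{g,j}[\pi]$ afforded by Proposition~\ref{residual_repr_is_semisimple}, together with the ordinary filtration \eqref{eq:ord_cond}. For part (2), I would first observe that $A_{g,j}[\pi]$ is a $2$-dimensional $\F$-vector space on which $\rG_{\Q}$ acts, by Proposition~\ref{residual_repr_is_semisimple}, diagonally through the characters $\omega_p^j$ and $\chi^{-1}\omega_p^j$ (note that, since $\chi$ has conductor $N$ prime to $p$ and $\cO=\Z_p[\chi]$, the mod-$\pi$ reduction of $\chi$ is a nontrivial character of $\rG_{\Q}$ of order prime to $p$). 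Hence $H^0(\Q_\infty,A_{g,j}[\pi])$ decomposes as the sum of the $\Gamma$-invariants of the lines on which $\rG_{\Q}$ acts by $\omega_p^j$ and by $\chi^{-1}\omega_p^j$ respectively. Since $\Gamma=\Gal(\Q_\infty/\Q)$ is the cyclotomic quotient and $\omega_p$ has order prime to $p$, the character $\omega_p^j$ becomes trivial on $\rG_{\Q_\infty}$; so the first line contributes an $\F$-line exactly when $\rG_{\Q}$ acts trivially there, i.e. when $j\equiv 0$. For the second line, $\chi^{-1}\omega_p^j$ restricted to $\rG_{\Q_\infty}$ is trivial iff the prime-to-$p$ part $\bar\chi^{-1}\omega_p^j$ is trivial as a character of $\Gal(\Q(\mu_N)/\Q)\times(\Z/p\Z)^\times$, and since $\chi$ is a nontrivial character of conductor $N$ and $\omega_p^j$ factors through $(\Z/p\Z)^\times$, this never happens. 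This gives $\dim_\F H^0(\Q_\infty,A_{g,j}[\pi])=1$ if $j=0$ and $0$ otherwise.

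For part (1), I would use $A_{g,j}^*=\Hom(A_{g,j},\mu_{p^\infty})$. By \eqref{eq:ord_cond} and property $\mathrm{(G5)}$ (or equivalently $\mathrm{(G2)}$ on determinants together with Proposition~\ref{residual_repr_is_semisimple}), the semisimplification of $A_{g,j}$ as a $\rG_{\Q}$-module has Jordan--Hölder factors $(K_g/\cO_g)(\omega_p^j)$ and $(K_g/\cO_g)(\chi^{-1}\kappa_p^{k-1}\varphi\,\omega_p^j)$; taking Cartier duals, the factors of $A_{g,j}^*$ are $(K_g/\cO_g)(\kappa_p\omega_p^{-j})$ and $(K_g/\cO_g)(\chi\kappa_p^{2-k}\varphi^{-1}\omega_p^{-j})$. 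A nonzero $\rG_{\Q_\infty}$-invariant element would force one of these characters to be trivial on $\rG_{\Q_\infty}$, i.e. trivial after projecting to the prime-to-$p$ quotient. The first character $\kappa_p\omega_p^{-j}$ has prime-to-$p$ part $\omega_p^{1-j}$, which is trivial iff $j\equiv 1$; and since on $\rG_{\Q_\infty}$ the character $\kappa_p$ is a topological generator direction (it is an infinite-order character on $\rG_{\Q_\infty}$ with image $1+p\Z_p$), $\kappa_p\omega_p^{-j}$ is genuinely nontrivial on $\rG_{\Q_\infty}$ for \emph{every} $j$ --- wait, this is the subtle point, so let me handle it carefully: $H^0(\Q_\infty,-)$ means $\rG_{\Q_\infty}$-invariants, and $\kappa_p$ is nontrivial on $\rG_{\Q_\infty}$, so the first factor never contributes. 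Hence $H^0(\Q_\infty,A_{g,j}^*)=0$ would follow unless the \emph{lattice} (not just its semisimplification) has an invariant; here one must use that the filtration is $\rG_{\Q_p}$-stable but the relevant vanishing is over $\rG_{\Q_\infty}$ globally, and the two diagonal characters are distinct, so invariants of $A_{g,j}^*$ equal invariants of its semisimplification. I therefore need to recheck the claimed equivalence ``$=0$ iff $j\neq1$'': it must come from the \emph{second} factor interacting with $j=1$, or from a more careful analysis involving $\varphi$ and $k$; I expect that in the weight-one-family normalization $\varphi=\chi\omega_p=\theta$ and $k=1$, so the second dual factor is $\kappa_p\omega_p^{1-j}\chi\chi^{-1}\omega_p^{-1}=\kappa_p\omega_p^{-j}$, collapsing to the first, and the $j=1$ exception genuinely arises at the level of the $2$-dimensional lattice where $\kappa_p$ appears with multiplicity on $A_{g,j}^*$.

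\textbf{Main obstacle.} The delicate step is part (1): I must pin down precisely which Tate twist each Jordan--Hölder constituent of $A_{g,j}^*$ carries, keeping careful track of the character $\varphi$ and the weight $k$ (which are specialized in the Eisenstein-family setting to $\theta=\chi\omega_p$ and may take various values $k\geq 1$ along the family), and then argue that $\rG_{\Q_\infty}$-invariants of the \emph{lattice} $A_{g,j}^*$ agree with those of its semisimplification --- this uses that the two graded pieces are non-isomorphic as $\rG_{\Q_\infty}$-modules \emph{except} in the borderline case, which is exactly what produces the ``$j\neq1$'' condition. Once the twists are correctly identified, each vanishing statement reduces to the elementary observation that a Hecke character of the form $\kappa_p^a\omega_p^b\chi^c$ is trivial on $\rG_{\Q_\infty}$ iff $a=0$ and $\omega_p^b\chi^c$ is trivial, which I would verify using $\gcd(\#\langle\omega_p\rangle, p)=1$, $\gcd(N,p)=1$, and the nontriviality and conductor of $\chi$.
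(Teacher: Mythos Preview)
Your argument for part~(2) is essentially the paper's: use Proposition~\ref{residual_repr_is_semisimple} to write $A_{g,j}[\pi]\cong\F(\omega_p^j)\oplus\F(\chi^{-1}\omega_p^j)$ and check when each character is trivial on $\rG_{\Q_\infty}$. One sentence is garbled, though: you write ``the character $\omega_p^j$ becomes trivial on $\rG_{\Q_\infty}$,'' which is false in general. What you want is that $\omega_p^j$ has order prime to $p$ and $\Gal(\Q_\infty/\Q)$ is pro-$p$, so $\omega_p^j|_{\rG_{\Q_\infty}}$ is trivial \emph{iff} $\omega_p^j$ is trivial on $\rG_\Q$, i.e.\ iff $j\equiv 0$. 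With that correction the argument is fine.

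Part~(1), however, has a genuine gap. You claim that the semisimplification of $A_{g,j}$ as a $\rG_\Q$-module has one-dimensional Jordan--H\"older factors $(K_g/\cO_g)(\omega_p^j)$ and $(K_g/\cO_g)(\chi^{-1}\kappa_p^{k-1}\varphi\,\omega_p^j)$. This is not correct: for $k\ge 2$ the representation $\rho_g$ is \emph{irreducible}, so $A_{g,j}$ has no such global character decomposition, and the ordinary filtration \eqref{eq:ord_cond} is only $\rG_{\Q_p}$-stable, not $\rG_\Q$-stable. Your subsequent confusion---that $\kappa_p$ has infinite order on $\rG_{\Q_\infty}$, so the dual characters $\kappa_p\omega_p^{-j}$ are never trivial there, seemingly contradicting the lemma---is a symptom of this.

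The paper's fix is exactly the step you are missing: reduce modulo $\pi$ \emph{before} dualizing. Since $A^*_{g,j}\cong\cO_g^2$ is $\cO_g$-free, one has $H^0(\Q_\infty,A^*_{g,j})=0$ iff $H^0(\Q_\infty,A^*_{g,j}/\pi)=0$ (a nonzero invariant can be scaled to be nonzero mod $\pi$). Now $A^*_{g,j}/\pi$ is the Cartier dual of $A_{g,j}[\pi]$, and by Proposition~\ref{residual_repr_is_semisimple} together with $\kappa_p\equiv\omega_p\pmod{\pi}$ one finds that $\rG_\Q$ acts on $A^*_{g,j}/\pi$ via the diagonal characters $\omega_p^{1-j}$ and $\chi\omega_p^{1-j}$. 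These are finite of order prime to $p$, so triviality on $\rG_{\Q_\infty}$ is equivalent to triviality on $\rG_\Q$; the second is never trivial (as $\chi$ has conductor $N$), and the first is trivial exactly when $j=1$. This is what makes the ``iff $j\neq 1$'' come out, and it completely dissolves your worry about the infinite order of $\kappa_p$.
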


\begin{proof}
From the above discussion, we see that the action of $\rG_{\Q} $ on $A^*_{g,j}/\pi_g$ under the basis $\{e_-,e_+\}$ is
$$
\begin{pmatrix} \kappa_p\omega_p^{-j}&\\ & \chi\omega_p^{-j}\kappa_p\end{pmatrix}\bmod \pi_g=\begin{pmatrix} \omega_p^{1-j} & \\ & \chi\omega_p^{1-j}\end{pmatrix}.
$$
The above equality is obtained by the definition of $p$-adic cyclotomic character that $\kappa_p\equiv \omega_p \bmod p$.
Hence, $H^0(\Q,A_{g,j}^*/\pi)=0$ if and only if $j\neq 1$. The assertion (1) then follows. The assertion (2) follows from Proposition~\ref{residual_repr_is_semisimple} immediately. 
\end{proof}

By \cite[Proposition~2.1]{greenberg-vatsal00}, the following corollary follows from Lemma~\ref{cond_show_surj_gamma} immediately.

\begin{cor}\label{surj_map_gamma}
Suppose that the Hecke algebra $\gh_{\theta,\gm}$ is Gorenstein. The map $\gamma$ in \eqref{eq:def_sel_gp} is surjective if $j\neq 1$.
\end{cor}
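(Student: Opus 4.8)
The plan is to deduce the statement directly from \cite[Proposition~2.1]{greenberg-vatsal00}, whose content is that for an ordinary $p$-adic Galois module of the type appearing in \eqref{eq:def_sel_gp} the global-to-local restriction map $\gamma$ is surjective as soon as $H^0(\Q_\infty, A^*)=0$, where $A^*:=\Hom(A,\mu_{p^\infty})$ is the Cartier dual. The surjectivity there is a consequence of Poitou--Tate global duality: the cokernel of $\gamma$ is pinned down by a global $H^2$ (equivalently by $H^0$ of the dual module equipped with the orthogonal complements of the chosen local conditions), and this term vanishes once $H^0(\Q_\infty,A^*)=0$, using that $\Sigma_{Np}$ contains no archimedean place since $p$ is odd.

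First I would observe that $A_{g,j}$ sits in the ordinary filtration \eqref{eq:ord_cond}, that the local condition $\cH_p(A_{g,j})$ at $p$ is exactly the Greenberg (Panchishkin) condition attached to $A''_{g,j}$, and that away from $p$ the condition is the unramified one on the inertia groups $\rI_{\infty,v}$. This is precisely the framework of \cite[Section~2]{greenberg-vatsal00}, so Proposition~2.1 of \textit{loc.~cit.} applies without modification in our situation; here the Gorenstein hypothesis on $\gh_{\theta,\gm}$ is what makes $H_{\theta,\gm}$ free of rank two and thus licenses the choice $T_{g,j}=H_{\theta,\gm}(1)/\gp_g\otimes\omega_p^j$ together with the ordinary filtration \eqref{eq:ord_fil_weigh_k_form}. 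Next I would verify the hypothesis: by Lemma~\ref{cond_show_surj_gamma}(1) one has $H^0(\Q_\infty,A_{g,j}^*)=0$ precisely when $j\neq 1$, this being a consequence of the semisimplicity of $\rho_{\gh_{\theta,\gm}}\bmod\gp_1$ (Proposition~\ref{residual_repr_is_semisimple}) and of the computation of the $\rG_\Q$-action on $A_{g,j}^*/\pi_g$ via $\kappa_p\equiv\omega_p\bmod p$. Feeding this into \cite[Proposition~2.1]{greenberg-vatsal00} gives the surjectivity of $\gamma$ for all $j\neq 1$.

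The only delicate point is bookkeeping of conventions rather than any substantial argument: one must confirm that the Cartier dual $A_{g,j}^*$ entering \cite[Proposition~2.1]{greenberg-vatsal00} agrees with the one computed in Lemma~\ref{cond_show_surj_gamma} (keeping track of both the $\omega_p^j$-twist and the Tate twist), and that the local conditions in \eqref{eq:def_sel_gp} are literally the ``strict/relaxed'' conditions for which that proposition is stated. Once these identifications are made the corollary is immediate, as asserted.
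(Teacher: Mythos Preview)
Your proposal is correct and follows essentially the same route as the paper: the paper simply notes that the corollary follows immediately from Lemma~\ref{cond_show_surj_gamma}(1) together with \cite[Proposition~2.1]{greenberg-vatsal00}, which is exactly the argument you outline. The additional remarks you make about the ordinary filtration, the Gorenstein hypothesis, and the matching of conventions are accurate elaborations but are not needed beyond what the paper already records.
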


\subsection{Properties of Selmer groups}
Let the notation be as in previous subsections. The following proposition will be used in Section~\ref{sec:alg_inv}.

\begin{prop}\label{sel_no_fin_submod}
The group $\Sel(\Q_{\infty},A_{g,j})^{\vee}$ has no finite $\Lambda$-submodules.
\end{prop}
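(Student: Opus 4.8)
The plan is to follow the standard strategy of Greenberg--Vatsal \cite[Proposition~2.4]{greenberg-vatsal00}, adapted to our non-$p$-distinguished setting using the explicit residual representation from Proposition~\ref{residual_repr_is_semisimple}. Recall the general principle: if $\mathcal{S}^\vee$ is the Pontryagin dual of a Selmer group, one shows it has no nonzero finite $\Lambda$-submodule by producing a surjection onto (the dual of) a cofree module plus a control argument. Concretely, I would first observe that $\Sel(\Q_\infty, A_{g,j})$ sits inside $H^1(\rG_{\Q_\infty,\Sigma_{Np}}, A_{g,j})$, and the latter has a dual which, by the weak Leopoldt-type statement and the fact that $A_{g,j}$ has no $p$-torsion issues in the relevant degree, has no finite submodules once one knows $H^2$ vanishes appropriately. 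The key input is Corollary~\ref{surj_map_gamma}: for $j \neq 1$ the localization map $\gamma$ in \eqref{eq:def_sel_gp} is surjective, so we get a short exact sequence
\begin{equation*}
0 \to \Sel(\Q_\infty, A_{g,j}) \to H^1(\rG_{\Q_\infty,\Sigma_{Np}}, A_{g,j}) \xrightarrow{\gamma} \prod_{v \in \Sigma_{Np}, v\nmid p} H^1(\rI_{\infty,v}, A_{g,j}) \times \cH_p(A_{g,j}) \to 0.
\end{equation*}
Since $j$ ranges over even integers $0 \le j \le p-1$ in our theorems, $j \neq 1$ always, so this surjectivity is available throughout.

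Next I would dualize and invoke the structural facts. By a standard argument (cf.\ \cite[Proposition~4.5]{greenberg89} or Greenberg's Iwasawa theory notes), $H^1(\rG_{\Q_\infty,\Sigma_{Np}}, A_{g,j})^\vee$ has no nonzero finite $\Lambda$-submodule: this follows because $H^2(\rG_{\Q_\infty,\Sigma_{Np}}, A_{g,j})$ vanishes (weak Leopoldt, using that $A_{g,j}$ is cofinitely generated and the cyclotomic $\Z_p$-extension has the right properties) and because $H^1$ of the global Galois group with these coefficients has no proper cofinite-index submodule that is $\Lambda$-cotorsion-free in the wrong way --- more precisely, one checks that $H^1(\rG_{\Q_\infty,\Sigma_{Np}}, A_{g,j})^\vee$ is a torsion-free-over-its-reflexive-hull type module whose only possible finite submodule would come from $H^0$ obstructions, which are controlled by Lemma~\ref{cond_show_surj_gamma}. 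Then I would push this through the surjection: a quotient of a module with no finite submodules can acquire one, so the real work is to see that the local terms $H^1(\rI_{\infty,v}, A_{g,j})$ for $v \nmid p$ and $\cH_p(A_{g,j})$ are themselves ``co-nice'' enough --- specifically that each has Pontryagin dual with no finite $\Lambda$-submodule, or at least that the connecting maps do not manufacture one. For the local terms at $v \nmid p$ this is classical (the inertia invariants/coinvariants of a cofinitely generated module over the procyclic $\Gal(\Q_{\infty,v}/\Q_v)$), and for $\cH_p$ one uses that it is a quotient of $H^1(\Q_{\infty,w}, A_{g,j})$ by the kernel defining the ordinary condition, combined with the explicit description of $A'_{g,j}, A''_{g,j}$ from \eqref{eq:ord_cond} and the semisimplicity from Proposition~\ref{residual_repr_is_semisimple}.

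The cleanest route, and the one I would actually write up, avoids the case-chase by using the criterion: $\mathcal{S}^\vee$ has no finite $\Lambda$-submodule if and only if $H^0(\Q_\infty, \mathcal{S}^\vee \otimes \cdots)$ behaves correctly, equivalently (Greenberg) if $\Sel(\Q_\infty, A_{g,j})$ is ``co-torsion-free at the boundary,'' which reduces via the Cassels--Poitou--Tate exact sequence to showing that the compactly supported $H^2$, i.e.\ $H^2_c$ or equivalently the dual Selmer-type group $\Sel(\Q_\infty, A^*_{g,j}[\text{something}])$ constraint, vanishes; here Lemma~\ref{cond_show_surj_gamma}(1) giving $H^0(\Q_\infty, A^*_{g,j}) = 0$ for $j \neq 1$ is exactly the needed vanishing, since $H^0(\Q_\infty, A^*_{g,j})^\vee$ is precisely the potential finite submodule (this is the Greenberg--Vatsal mechanism: the finite submodule of $\Sel^\vee$ injects into $H^0(\Q_\infty, A^*_{g,j})^\vee$). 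So the proof contracts to: (i) cite Corollary~\ref{surj_map_gamma} for surjectivity of $\gamma$; (ii) cite \cite[Proposition~2.1]{greenberg-vatsal00} or \cite[Proposition~4.5]{greenberg89} which says that under surjectivity of the global-to-local map and vanishing of $H^0(\Q_\infty, A^*_{g,j})$, the dual Selmer group has no finite submodule; (iii) supply the vanishing $H^0(\Q_\infty, A^*_{g,j}) = 0$ from Lemma~\ref{cond_show_surj_gamma}(1), valid since every $j$ we consider is even hence $\neq 1$. The main obstacle I anticipate is purely bookkeeping: making sure the version of Greenberg's ``no finite submodule'' proposition being invoked has hypotheses matching our $\Sigma_{Np}$ (non-trivial ramification away from $p$) rather than the classical $\Sigma = \{p, \infty\}$ setup; if there is a mismatch one must redo the local analysis at $v \mid N$, but since those local cohomology groups are cofinitely generated over $\Lambda$ with the procyclic inertia acting, their duals are pseudo-null-free and the argument goes through unchanged. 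There is no genuine conceptual difficulty here --- the weight-one vs.\ weight-$k$ distinction and the reducibility of $\bar\rho_g$ do not enter, because the statement is about the $\Lambda$-module structure of the dual Selmer group, and all we need from the geometry is the one vanishing statement already extracted in Lemma~\ref{cond_show_surj_gamma}.
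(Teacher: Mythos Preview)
Your plan has the right ingredients (surjectivity of $\gamma$, some $H^0$ vanishing, Greenberg's machinery), but step~(ii) rests on a misattribution that leaves a real gap. The result you label as \cite[Proposition~2.1]{greenberg-vatsal00} is precisely the statement that $H^0(\Q_\infty,A_{g,j}^*)=0$ forces $\gamma$ to be surjective; that is exactly what the paper already packages as Corollary~\ref{surj_map_gamma} and what you invoked in step~(i). It does \emph{not} say that this same vanishing, together with surjectivity, implies $\Sel^\vee$ has no finite submodules, and your heuristic ``the finite submodule of $\Sel^\vee$ injects into $H^0(\Q_\infty,A_{g,j}^*)^\vee$'' is not a theorem in Greenberg or Greenberg--Vatsal. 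As you yourself note, dualizing the exact sequence makes $\Sel^\vee$ a \emph{quotient} of $H^1(\rG_{\Q_\infty,\Sigma_{Np}},A_{g,j})^\vee$, and quotients can acquire finite submodules; the missing step is an honest analysis of the local pieces, which you wave at but never carry out.

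The paper's proof is organized differently and the relevant $H^0$ is of $A_{g,j}$ itself, not of $A_{g,j}^*$. For $j\neq 0$ one has $H^0(\Q_\infty,A_{g,j})=0$ by Proposition~\ref{residual_repr_is_semisimple} (since $H^0(\rG_\Q,A_{g,j}[\pi])=0$ and $\Gamma$ is pro-$p$), and with that in hand one can run the argument of \cite[Proposition~4.14]{greenberg99} directly. For $j=0$ this fails --- Lemma~\ref{cond_show_surj_gamma}(2) gives $\dim_\F H^0(\Q_\infty,A_{g,0}[\pi])=1$ --- so one must instead use surjectivity of $\gamma$ together with the fact that $\cH_p(A_{g,0})$ is $\Lambda$-cofree (\cite[Proposition~2.5]{greenberg-vatsal00}, using that $A''_{g,0}$ is unramified, which is special to $j=0$), pass to the non-primitive Selmer group $\Sel'(\Q_\infty,A_{g,0})$, show \emph{that} has no proper $\Lambda$-submodule of finite index via \cite[Proposition~4.15]{greenberg99}, and descend via \cite[Lemma~2.6]{greenberg-vatsal00}. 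Your uniform approach hides exactly the place where the $j=0$ case requires extra input, namely the cofreeness of $\cH_p$; without that, the argument does not close.
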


\begin{proof}
One can adapted the proof in \cite[Section~4]{greenberg99} to show the desired assertion. For the sake of convenience, we sketch a proof and left the detail to the reader. 

When $j\neq 0$, it follows from Proposition~\ref{residual_repr_is_semisimple} that $H^0(\Q,A_{g,j}[\pi])=0$ and hence, one has $H^0(\Q,A_{g,j})=0=H^0(\Q_{\infty},A_{g,j})$ since $\Gamma$ is pro-$p$. The assertion then follows from a similar proof of \cite[Proposition~4.14]{greenberg99}. 

When $j=0$, the morphism $\gamma$ in \eqref{eq:def_sel_gp} is surjective by Corollary~\ref{surj_map_gamma}. Then, the $\Lambda$-module $\cH_p(A_{g,0})$ is $\Lambda$-cofree by \cite[Proposition~2.5]{greenberg-vatsal00} since $A''_{g,0}$ is unramified. Let $\Sel'(\Q_{\infty},A_{g,0})$ be the non-primitive Selmer group defined by omitting the condition $\cH_p(A_{g,0})$ in \eqref{eq:def_sel_gp}. Since $\gamma$ is surjective, we have
$$
\Sel'(\Q_{\infty},A_{g,0})/\Sel(\Q_{\infty},A_{g,0})\simeq \cH_p(A_{g,0}).
$$
By Lemma~2.6 of \textit{op.~cit.}, to show the assertion, it then suffices to show that $\Sel'(\Q_{\infty},A_{g,0})$ has no submodules of finite index, which follows from a similar argument of \cite[Proposition~4.15]{greenberg99}.
\end{proof}

The following is a consequence of Proposition~\ref{sel_no_fin_submod}. 

\begin{cor}\label{induction_to_residual_sel}
We have
$$
\mu^{alg}(g,\omega_p^j)=0 \Leftrightarrow \Sel(\Q_{\infty},A_{g,j})[\pi_g] \mbox{ is finite}.
$$
If this is the case, then $\Sel(\Q_{\infty},A_{g,j})$ is $\cO_g$-divisible and 
$$
\lambda^{alg}(g,\omega_p^j)=\dim_{\mathbb{F}} (\Sel(\Q_{\infty},A_{g,j})[\pi_g]).
$$
\end{cor}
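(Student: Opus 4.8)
The plan is to deduce Corollary~\ref{induction_to_residual_sel} from Proposition~\ref{sel_no_fin_submod} together with the standard structure theory of finitely generated torsion $\Lambda_{\cO_g}$-modules, exactly as in the classical Iwasawa-theoretic dichotomy between $\mu=0$ and the finiteness of the $\pi$-torsion of the Selmer group. Write $M:=\Sel(\Q_{\infty},A_{g,j})^{\vee}$, a finitely generated torsion $\Lambda_{\cO_g}$-module with no finite $\Lambda$-submodule by Proposition~\ref{sel_no_fin_submod}, and recall $\Sel(\Q_{\infty},A_{g,j})\simeq M^{\vee}$. The key point is that $\Sel(\Q_{\infty},A_{g,j})[\pi]$ is Pontryagin dual to $M/\pi M$, so the corollary is a statement about $M/\pi M$.

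The steps I would carry out are as follows. \textbf{Step 1.} Observe that $\mu^{\alg}(A_{g,j})=0$ means, by \eqref{eq:def_alg_inv} and \eqref{eq:char_poly}, that $M$ has no $\Lambda/(p^{n})$ (equivalently $\Lambda/(\pi^n)$) summand in its pseudo-isomorphism type; equivalently $M$ is pseudo-isomorphic to $\bigoplus_i \Lambda_{\cO}/(f_i(X))$ with all $f_i$ distinguished, equivalently $M[\pi^{\infty}]$ is a pseudo-null (hence, here, finite) $\Lambda$-module. \textbf{Step 2.} Using that $M$ has no nonzero finite $\Lambda$-submodule, upgrade ``$M[\pi^{\infty}]$ pseudo-null'' to ``$M[\pi^{\infty}]=0$'': any finite submodule is trivial, so $M$ is $\pi$-torsion free, hence $p$-torsion free, i.e. $M$ is a free $\Z_p$-module (of finite rank after further bounding). \textbf{Step 3.} For such $M$ (finitely generated torsion over $\Lambda$, $\Z_p$-free), $M/\pi M$ is finite iff $M$ is finitely generated over $\Z_p$, and both are equivalent to $\mu^{\alg}=0$; dualizing, $\Sel(\Q_{\infty},A_{g,j})[\pi]$ is finite iff $\mu^{\alg}(A_{g,j})=0$. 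Conversely, if $\mu^{\alg}\neq 0$ then $M$ has a $\Lambda/(\pi^n)$ quotient up to pseudo-isomorphism and $M/\pi M$ is infinite, so $\Sel[\pi]$ is infinite; this gives the equivalence. \textbf{Step 4.} Assume now $\mu^{\alg}=0$, so $M$ is $\Z_p$-free of finite rank $\lambda:=\lambda^{\alg}(A_{g,j})$ (using Step 2 and that $\lambda$ is the generic rank). Then $M/\pi M$ has $\F$-dimension exactly $\rank_{\cO_g} M$, but since the residue field is $\F$ we get $\dim_{\F} M/\pi M=\lambda^{\alg}(A_{g,j})$ — being slightly careful whether $\lambda$ counts over $\Lambda_{\cO_g}$ or incorporates $[\F:\F_p]$; in the normalization of \eqref{eq:invarariant_power_series} these agree. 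Dualizing, $\dim_{\F}\Sel(\Q_{\infty},A_{g,j})[\pi]=\lambda^{\alg}(g,\omega_p^j)$. \textbf{Step 5.} Finally, $\Z_p$-freeness of $M$ means $\Sel(\Q_{\infty},A_{g,j})=M^{\vee}$ is $\Z_p$-divisible, hence $\cO_g$-divisible (as $\cO_g/\Z_p$ is finite flat), giving the last assertion.

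I expect Step 2–Step 3 to be the crux: the implication that the \emph{a priori} pseudo-null $\pi$-power torsion of $M$ actually vanishes is precisely where the ``no finite submodule'' input from Proposition~\ref{sel_no_fin_submod} is used, and one must be careful that $M[\pi^{\infty}]$ is not merely pseudo-null but literally a finite $\Lambda$-submodule, which forces it to be $0$. The rest is bookkeeping with the structure theorem \cite[Theorem~13.12]{washington-GTM83}. I would also flag the normalization subtlety about $\lambda$-invariants versus $\F$-dimensions (a factor of $[\F:\F_p]$); I would state the identity in the form $\lambda^{\alg}(g,\omega_p^j)=\dim_{\F}\Sel(\Q_{\infty},A_{g,j})[\pi]$ consistent with the conventions fixed in \eqref{eq:invarariant_power_series}, noting that all the characteristic-ideal computations later in the paper are performed over $\Lambda_{\cO}$ so that the $\F$-dimension is the right invariant to track.
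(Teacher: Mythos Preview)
Your proposal is correct and is precisely the standard argument from \cite[Theorem~4.1.1]{emerton-pollack-weston}, which is exactly what the paper cites for its proof. The only point worth tightening is in Step~2: to conclude that $M$ is finitely generated over $\cO_g$ (not just $\pi$-torsion free) you should invoke that $\mu=0$ forces the elementary module in the pseudo-isomorphism to be $\cO_g$-finite, and then the finite kernel/cokernel transfer this to $M$; once that is in place your Steps~4--5 go through verbatim and the normalization issue you flag indeed does not arise, since $\lambda$ is computed over $\Lambda_{\cO_g}$.
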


\begin{proof}
For a proof, see the proof of \cite[Theorem~4.1.1]{emerton-pollack-weston}.
\end{proof}

\subsection{Residual Selmer groups}\label{sec:res_sel_gp}
We now define the residual Selmer groups following \cite[\S 2]{greenberg-vatsal00} and then discuss its relation to the usual Selmer groups. 

Recall that $\F$ denotes the residual field of $\cO_g$. Consider the exact sequence of $\F[\rG_{\Q_p}]$-modules
$$
0\to A'_{g,j}[\pi_g]\to A_{g,j}[\pi_g] \to A''_{g,j}[\pi_g]\to 0.
$$
The residual Selmer group is defined as
\[
\begin{split}
\Sel(\Q_{\infty},A_{g,j}[\pi_g]):=
\ker(H^1(\rG_{\Q_{\infty},\Sigma_{Np}},A_{g,j}[\pi_g])\to \prod_{v\in \Sigma_{Np},v\nmid p} H^1(\rI_{\infty,v},A_{g,j}[\pi_g]) \times \cH_p(\Q_{\infty},A_{g,j}[\pi_g])),
\end{split}
\]
where $\cH_p(\Q_{\infty},A_{g,j}[\pi_g]):=\mathrm{Im}(H^1(\Q_{\infty,w},A[\pi_g])\to H^1(\rI_{\infty,w},A''_{g,j}[\pi_g]))$.

\begin{lemma}\label{inj_selcond_at_p}
For any $v|N$, the map
$$
H^1(\rI_{\infty,v},A_{g,j}[\pi_g])\to H^1(\rI_{\infty,v},A_{g,j})
$$
is injective. Also, the map
$$
H^1(\rI_{\infty,w}, A''_{g,j}[\pi_g])\to H^1(\rI_{\infty,w},A''_{g,j})
$$
is injective, where $w$ is the unique prime of $\Q_{\infty}$ above $p$.
\end{lemma}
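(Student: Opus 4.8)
The plan is to deduce both injectivity statements from the long exact cohomology sequences attached to the short exact sequences $0 \to A_{g,j}[\pi] \to A_{g,j} \xrightarrow{\pi} A_{g,j} \to 0$ and $0 \to A''_{g,j}[\pi] \to A''_{g,j} \xrightarrow{\pi} A''_{g,j} \to 0$, restricted to the relevant inertia subgroups. For the first map, taking $\rI_{\infty,v}$-cohomology produces an exact sequence in which the kernel of $H^1(\rI_{\infty,v},A_{g,j}[\pi]) \to H^1(\rI_{\infty,v},A_{g,j})$ is a quotient of $H^0(\rI_{\infty,v},A_{g,j})/\pi$, so it suffices to show $H^0(\rI_{\infty,v},A_{g,j})$ is $\pi$-divisible, equivalently that $A_{g,j}^{\rI_{\infty,v}}$ is a divisible $\cO_g$-module. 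Since $A_{g,j} \cong (K_g/\cO_g)^2$ as an $\cO_g$-module and $\rI_{\infty,v}$ acts $\cO_g$-linearly, the invariants $A_{g,j}^{\rI_{\infty,v}}$ form an $\cO_g$-submodule of a cofinitely generated divisible module; one checks it is itself divisible, for instance because the $\rI_{\infty,v}$-action factors through a finite quotient (the representation $\rho_g$ is de Rham, hence quasi-unipotent at $v \nmid p$, and $\Q_{\infty,v}/\Q_v$ is pro-$p$ with $v \neq p$) so $A_{g,j}^{\rI_{\infty,v}}$ is the kernel of an $\cO_g$-linear endomorphism of a divisible module, and such kernels are divisible when the module is $\cO_g$-divisible and the relevant operator acts semisimply — alternatively, one argues directly that multiplication by $\pi$ is surjective on the invariants. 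The cleanest route is: $A_{g,j}^{\rI_{\infty,v}}$ is divisible because it is the maximal subspace of $A_{g,j}$ on which a procyclic group of order prime to $p$ acts trivially, and averaging over that finite group gives an $\cO_g$-linear projector, exhibiting $A_{g,j}^{\rI_{\infty,v}}$ as a direct summand of the divisible module $A_{g,j}$.

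For the second map, the same formalism applies with $A''_{g,j} = (K_g/\cO_g)(\eta\omega_p^j)$ in place of $A_{g,j}$ and $\rI_{\infty,w}$ in place of $\rI_{\infty,v}$: the kernel of $H^1(\rI_{\infty,w},A''_{g,j}[\pi]) \to H^1(\rI_{\infty,w},A''_{g,j})$ is a quotient of $H^0(\rI_{\infty,w},A''_{g,j})/\pi$, so it vanishes once $(A''_{g,j})^{\rI_{\infty,w}}$ is $\pi$-divisible. Now $\eta$ is unramified, so $\rI_{\infty,w}$ acts on $A''_{g,j}$ purely through $\omega_p^j$; since $\omega_p^j$ is a character of $(\Z/p\Z)^\times$ of order prime to $p$ while $\rI_{\infty,w} = \rI_{\Q_{\infty,w}}$ surjects onto $\mathrm{Gal}(\Q_p(\mu_p)/\Q_p) \cong (\Z/p\Z)^\times$ with pro-$p$ kernel, the invariants $(A''_{g,j})^{\rI_{\infty,w}}$ are all of $A''_{g,j}$ if $j = 0$ and are $0$ if $j \neq 0$; in either case they are $\pi$-divisible, giving the injectivity.

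The main obstacle — really the only subtlety — is justifying that the inertia invariants in question are $\pi$-divisible (equivalently $\cO_g$-divisible). This reduces to the observation that the inertia action on these modules factors through a finite group of order prime to $p$: for $v \mid N$ this uses that $\rho_g$ is at worst quasi-unipotent at $v$ together with the fact that the wild inertia and the tame pro-$p$ part act through a $\pi$-divisible piece, and for $w \mid p$ it uses that the relevant character $\eta\omega_p^j$ is tamely ramified of prime-to-$p$ order. Once one has a finite prime-to-$p$ group acting $\cO_g$-linearly on a divisible $\cO_g$-module, the invariants are a direct summand and hence divisible, and the standard Kummer sequence argument finishes both claims. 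I would present this uniformly: state a small lemma that for a cofinitely generated divisible $\cO_g$-module $B$ with an action of a profinite group $G$ whose action factors through a finite quotient of order prime to $p$, one has $B^G$ divisible and $(B^G)/\pi \hookrightarrow$ (in fact equals) $(B/\pi)^G$, and then apply it to $B = A_{g,j}$, $G = \rI_{\infty,v}$ and to $B = A''_{g,j}$, $G = \rI_{\infty,w}$.
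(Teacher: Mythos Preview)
Your overall strategy matches the paper's: use the long exact sequence to identify the kernel with $A_{g,j}^{\rI_{\infty,v}}/\pi$ (respectively $(A''_{g,j})^{\rI_{\infty,w}}/\pi$) and show these vanish by divisibility. Your treatment at $w\mid p$ is fine and in fact more careful than the paper's (you distinguish $j=0$ from $j\neq 0$, whereas the paper just writes $(A''_{g,j})^{\rI_{\infty,w}}=A''_{g,j}$).

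The gap is at $v\mid N$. Your proposed justifications for why $A_{g,j}^{\rI_{\infty,v}}$ is divisible do not hold up. Quasi-unipotence (Grothendieck monodromy) does \emph{not} say the inertia action factors through a finite quotient; it says that an open subgroup acts unipotently, and a nontrivial unipotent action on $(K_g/\cO_g)^2$ need not have divisible invariants in general (think of the Steinberg case with a badly chosen lattice). Likewise, $\rI_{\infty,v}=\rI_{\ell}$ is not procyclic, and nothing you have said forces its image to have prime-to-$p$ order. So the averaging/projector argument, and the general lemma you formulate at the end, cannot be applied without further input.

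What actually makes it work here --- and what the paper invokes by citing \cite[Lemma~4.1.2]{emerton-pollack-weston} together with ``$\chi$ is primitive of conductor $N$'' --- is specific to the setup: since the $\ell$-part of the conductor of the nebentypus equals the $\ell$-part of the level, the local representation at $\ell$ is a ramified principal series (not Steinberg), so $\rho_g|_{\rI_\ell}$ is $\chi|_{\rI_\ell}\oplus\mathbbm{1}$; and the standing hypothesis $p\nmid\phi(N)$ then guarantees this finite image has order prime to $p$. Once you insert this reasoning, your projector argument becomes valid and your proof is complete.
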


\begin{proof}
The kernel of these two maps are $A_{g,j}^{\rI_{\infty,v}}/\pi_g$ and $(A''_{g,j})^{\rI_{\infty,w}}/\pi_g$, respectively. They are zero since both $A_{g,j}^{\rI_{\infty,v}}$ and $(A''_{g,j})^{\rI_{\infty,w}}=A''_{g,j}$ are divisible.
Notice that $A_{g,j}^{\rI_{\infty,v}}$ is divisible follows from the proof of \cite[Lemma~4.1.2]{emerton-pollack-weston} as the character $\chi$ is primitive of conductor $N$.
\end{proof}

\begin{prop}\label{SES1}
We have a short exact sequence
\begin{equation}\label{eq:residal_sel_and_usual_selmer}
0\to H^0(\Q_{\infty},A_{g,j})/\pi_g \to \Sel(\Q_{\infty},A_{g,j}[\pi_g]) \to \Sel(\Q_{\infty},A_{g,j})[\pi_g] \to 0
\end{equation}
and 
\begin{equation}\label{eq:dim_Qinfty_pts}
\dim_{\F} H^0(\Q_{\infty},A_{g,j})/\pi_g
=\begin{cases}
1 & \mbox{ if } j=0\\
0 & \mbox{ if } j\neq 0.
\end{cases}
\end{equation}
\end{prop}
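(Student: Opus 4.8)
The plan is to derive the exact sequence \eqref{eq:residal_sel_and_usual_selmer} from the Kummer-type short exact sequence $0\to A_{g,j}[\pi]\to A_{g,j}\xrightarrow{\pi} A_{g,j}\to 0$ of $\F[\rG_{\Q_{\infty},\Sigma_{Np}}]$-modules by taking Galois cohomology, and then to propagate this through the local conditions defining the Selmer group. First I would write down the long exact cohomology sequence over $\rG_{\Q_{\infty},\Sigma_{Np}}$: it gives
\[
0\to H^0(\Q_{\infty},A_{g,j})/\pi \to H^1(\rG_{\Q_{\infty},\Sigma_{Np}},A_{g,j}[\pi])\xrightarrow{\iota} H^1(\rG_{\Q_{\infty},\Sigma_{Np}},A_{g,j})[\pi]\to 0.
\]
The same multiplication-by-$\pi$ sequence at each place $v\in\Sigma_{Np}$ with $v\nmid p$ gives maps $H^1(\rI_{\infty,v},A_{g,j}[\pi])\to H^1(\rI_{\infty,v},A_{g,j})$, which for $v\mid N$ are injective by Lemma~\ref{inj_selcond_at_p}; and at the prime $w$ above $p$ the analogous statement for $A''_{g,j}$ is also the injectivity part of Lemma~\ref{inj_selcond_at_p}, which shows $\cH_p(\Q_{\infty},A_{g,j}[\pi])$ injects into $\cH_p(A_{g,j})$.

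Next I would run the snake lemma / diagram chase on the commutative diagram whose rows are the two defining sequences of $\Sel(\Q_{\infty},A_{g,j}[\pi])$ and $\Sel(\Q_{\infty},A_{g,j})[\pi]$ inside the global and local $H^1$'s, using the map $\iota$ above on global cohomology and the injective restriction maps on the local terms. The injectivity of the local maps forces an element of $H^1(\rG_{\Q_{\infty},\Sigma_{Np}},A_{g,j}[\pi])$ to be trivial in all the local quotients precisely when its image under $\iota$ is, so $\iota$ carries $\Sel(\Q_{\infty},A_{g,j}[\pi])$ onto $\Sel(\Q_{\infty},A_{g,j})[\pi]$; and the kernel of $\iota$, namely $H^0(\Q_{\infty},A_{g,j})/\pi$, automatically lands in the Selmer group because $H^0$ has no local obstruction — the image of a class coming from $H^0(\Q_{\infty},A_{g,j})/\pi$ restricts trivially to every inertia subgroup and to $A''_{g,j}[\pi]$ via a coboundary computation. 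This yields \eqref{eq:residal_sel_and_usual_selmer}.

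Finally, for \eqref{eq:dim_Qinfty_pts} I would identify $H^0(\Q_{\infty},A_{g,j})/\pi$ with a subquotient of $H^0(\Q_{\infty},A_{g,j}[\pi])=A_{g,j}[\pi]^{\rG_{\Q_{\infty}}}$: since $\Gamma=\Gal(\Q_{\infty}/\Q)$ is pro-$p$ and $A_{g,j}[\pi]$ is an $\F$-vector space of order a power of $p$, one has $A_{g,j}[\pi]^{\rG_{\Q_{\infty}}}=A_{g,j}[\pi]^{\rG_{\Q}}=H^0(\rG_{\Q},A_{g,j}[\pi])$, whose dimension is computed in Lemma~\ref{cond_show_surj_gamma}(2) to be $1$ if $j=0$ and $0$ if $j\neq0$. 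Because the natural surjection $H^0(\Q_{\infty},A_{g,j})\to H^0(\Q_{\infty},A_{g,j}[\pi])\cong H^0(\Q_{\infty},A_{g,j})[\pi]$ arising from the same Kummer sequence is in fact an isomorphism after reducing mod $\pi$ (the cokernel of $H^0(\Q_{\infty},A_{g,j})\xrightarrow{\pi}H^0(\Q_{\infty},A_{g,j})$ injects into $H^1(\Q_{\infty},A_{g,j}[\pi])$ and maps isomorphically onto $H^0(\Q_{\infty},A_{g,j}[\pi])$ when $H^0(\Q_{\infty},A_{g,j})$ is cofinitely generated and $\cO_g$-divisible, which holds when $j\neq 0$ trivially and when $j=0$ because $A_{g,0}$ contains a copy of $\F$ with trivial action exactly once), we get $\dim_\F H^0(\Q_{\infty},A_{g,j})/\pi=\dim_\F H^0(\rG_{\Q},A_{g,j}[\pi])$, giving the stated values. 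I expect the main obstacle to be the bookkeeping in the diagram chase — precisely checking that the image of $H^0(\Q_{\infty},A_{g,j})/\pi$ satisfies all the Selmer local conditions and that $\iota$ is surjective onto $\Sel(\Q_{\infty},A_{g,j})[\pi]$ rather than merely into it — which hinges entirely on the injectivity statements of Lemma~\ref{inj_selcond_at_p} together with the surjectivity of $\gamma$ from Corollary~\ref{surj_map_gamma} when $j\neq 1$.
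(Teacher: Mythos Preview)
Your approach to the short exact sequence \eqref{eq:residal_sel_and_usual_selmer} is essentially the same as the paper's: the Kummer sequence plus Lemma~\ref{inj_selcond_at_p} plus a diagram chase. One minor point: the surjectivity of $\gamma$ from Corollary~\ref{surj_map_gamma} is \emph{not} needed here --- the surjectivity of $\Sel(\Q_\infty,A_{g,j}[\pi])\to\Sel(\Q_\infty,A_{g,j})[\pi]$ follows purely from the injectivity of the local maps in Lemma~\ref{inj_selcond_at_p}, by exactly the chase you describe.

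There is, however, a genuine gap in your argument for \eqref{eq:dim_Qinfty_pts} when $j=0$. You correctly identify $H^0(\Q_\infty,A_{g,0}[\pi])\cong H^0(\Q_\infty,A_{g,0})[\pi]$ and compute its $\F$-dimension as $1$ via Lemma~\ref{cond_show_surj_gamma}(2). But the quantity in the statement is $\dim_\F H^0(\Q_\infty,A_{g,0})/\pi$, and for a cofinitely generated $\cO_g$-module $M$ one has $\dim_\F M/\pi = \dim_\F M[\pi]$ only when $M$ has trivial divisible part. Your parenthetical asserts that $H^0(\Q_\infty,A_{g,0})$ is $\cO_g$-divisible ``because $A_{g,0}$ contains a copy of $\F$ with trivial action exactly once'', but this is backwards: if $H^0$ were divisible then $H^0/\pi=0$, giving dimension $0$ rather than $1$. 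What is actually needed is that $H^0(\Q_\infty,A_{g,0})$ is \emph{finite}, i.e.\ has no divisible part, and this is precisely where the paper invokes an adaptation of Ribet's argument \cite[Theorem~3]{ribet81}, using that $\rho_g$ is irreducible for $g$ of weight $k\geq 2$. Without that input the $j=0$ case of \eqref{eq:dim_Qinfty_pts} does not follow.

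A smaller remark: the equality $A_{g,j}[\pi]^{\rG_{\Q_\infty}}=A_{g,j}[\pi]^{\rG_\Q}$ is correct, but not simply because $\Gamma$ is pro-$p$ acting on a $p$-group (that argument only gives one inclusion). It holds here because the residual representation $\omega_p^j\oplus\chi^{-1}\omega_p^j$ factors through a quotient of $\rG_\Q$ of order prime to $p$, on which $\Gamma$ acts trivially.
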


\begin{proof}
The exact sequence \eqref{eq:residal_sel_and_usual_selmer} is known \cite[Lemma~4.3]{bellaiche-pollack}. For the sake of convenience, we give a proof here. Since $A_{g,j}$ is $\cO_g$-divisible, multiplication by $\pi_g$ yields a short exact sequence
$$
0\to A_{g,j}[\pi_g] \to A_{g,j} \xrightarrow{\times \pi_g} A_{g,j} \to 0.
$$
Hence, we have the following exact sequence of $\cO_g$-modules
\[
0\to H^0(\Q_{\infty},A_{g,j})/\pi_g \to H^1(\Q_{\infty},A_{g,j}[\pi_g]) \to H^1(\Q_{\infty},A_{g,j})[\pi_g] \to 0
\]
and the following isomorphism of $\F$-vector spaces
\begin{equation}\label{eq:resdual_p_torsion_pts}
H^0(\Q_{\infty},A_{g,j}[\pi_g])\simeq H^0(\Q_{\infty},A_{g,j})[\pi_g].
\end{equation}
One can verify the surjectivity of $\Sel(\Q_{\infty},A_{g,j}[\pi_g]) \to \Sel(\Q_{\infty},A_{g,j})[\pi_g]$ by Lemma~\ref{inj_selcond_at_p} and by diagram chasing. 

To see the injectivity of $H^0(\Q_{\infty},A_{g,j})/\pi_g \to \Sel(\Q_{\infty},A_{g,j}[\pi_g])$, 
Lemma~\ref{cond_show_surj_gamma}(2) yields the vanishing of $H^0(\Q_{\infty},A_{g,j})$ if $j\neq 0$. Thus, there is nothing to show in this case. We now assume $j=0$. It suffices to show that
$H^0(\Q_{\infty},A_{g,0})/\pi_g$ is contained in $\Sel(\Q_{\infty},A_{g,0}[\pi_g])$, i.e. it satisfies the Selmer condition at all places of $\Q_{\infty}$ above $Np$. It satisfies the Selmer conditions at the place above $p$, since the composition 
$$
H^0(\Q_{\infty},A_{g,0})/\pi_g\to H^1(\Q_{\infty},A_{g,0}[\pi_g])\to H^1(\rI_{\infty,w},A''_{g,0}[\pi_g])
$$
coincides with the composition
$$
H^0(\Q_{\infty},A_{g,0})/\pi_g \to H^0(\rI_{\infty,w},A''_{g,0})/\pi_g\to H^1(\rI_{\infty,w},A''_{g,0}[\pi_g]),
$$
and $H^0(\rI_{\infty,w},A''_{g,0})/\pi_g=A''_{g,0}/\pi_g=0$ as $A''_{g,0}$ is divisible. Similarly, it satisfies the Selmer conditions at all the places $v$ above $N$, since we have seen in the proof of Lemma~\ref{inj_selcond_at_p} that $H^0(\rI_{\infty,v},A_{g,0})$ is divisible and hence, $H^0(\rI_{\infty,v},A_{g,0})/\pi_g=0$.

Finally, we show \eqref{eq:dim_Qinfty_pts} when $j=0$. Note that $H^0(\Q_{\infty},A_{g,j})$ has divisible part and torsion part. One can adapt the proof of \cite[Theorem~3]{ribet81} to show that the divisible part is trivial. Then, \eqref{eq:resdual_p_torsion_pts} and Lemma~\ref{cond_show_surj_gamma}(2) can yield
$$
\dim_{\F} H^0(\Q_{\infty},A_{g,0})/\pi_g= \dim_{\F} H^0(\Q_{\infty},A_{g,0})[\pi_g]/\pi_g=1. \qedhere
$$ 
\end{proof}



\section{Algebraic Iwasawa invariants}\label{sec:alg_inv}
Throughout this section, $g$ denotes ordinary cusp form of weight $k
\geq 2$ passing through by a cuspidal family corresponding to a minimal prime of $\gh_{\theta,\gm}$. Recall that we denote by $f$ the $p$-stabilization of $E_1(\chi,\mathbbm{1})$ and that the Iwasawa modules $X_{Np^{\infty}}$ and $\mathfrak{X}_{Np^{\infty}}$ was defined in Section~\ref{sec:local_cup_prod}. This section is devoted to computing the (residual) Selmer groups that completes the proof of the algebraic counter parts of  Theorems~\ref{12} and \ref{inv_higher_wt}.

\subsection{Trivial twist}
The goal of this subsection is to present the proof for the case $j=0$. We first compute the (strict) Selmer group of $f$.

\begin{prop}\label{comp_sel_wt1_nontwist}
One has
$\Sel(\Q_{\infty},A_{f,0})\simeq X_{Np^{\infty}}^{(\chi^{-1})}$ and $\Sel^{\mathrm{st}}(\Q_{\infty},A_{f,0})\simeq X_{Np^{\infty},p}^{(\chi^{-1})}$. In particular, the $\Lambda$-module $\Sel(\Q_{\infty},A_{f,0})^{\vee}$ is torsion, $\mu^{\alg}(A_{f,0})=0$, and \eqref{eq:sel_gp_weigh_one_nontwist} holds for $j=0$.
\end{prop}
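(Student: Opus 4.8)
The plan is to exploit that, under the Gorenstein hypothesis, the lattice $T_f$ fixed in Section~\ref{sec:choice_of_lattice} is semisimple. By Proposition~\ref{residual_repr_is_semisimple} together with the uniqueness of $T_f$ proved in \cite{BDP}, the $\rG_{\Q}$-module $A_{f,0}$ splits as $(K/\cO)\oplus(K/\cO)(\chi^{-1})$, where the first summand is unramified everywhere and the second is ramified exactly at the primes dividing $N$; since $\chi(p)=1$, both summands are unramified at $p$, so $A_{f,0}|_{\rG_{\Q_p}}\cong(K/\cO)^2$ with trivial $\rG_{\Q_p}$-action, and by Proposition~\ref{residual_repr_is_semisimple} the $\rG_{\Q_p}$-stable line $A'_{f,0}$ of the ordinary filtration \eqref{eq:ord_cond} is the ``diagonal'' line $\langle e_{-}+e_{+}\rangle$ rather than one of the two summands. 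I would then write a class of $H^1(\rG_{\Q_{\infty},\Sigma_{Np}},A_{f,0})$ as a pair $(x,y)$ with $x\in H^1(\rG_{\Q_{\infty},\Sigma_{Np}},K/\cO)$ and $y\in H^1(\rG_{\Q_{\infty},\Sigma_{Np}},(K/\cO)(\chi^{-1}))$, and read off the Greenberg conditions of \eqref{eq:def_sel_gp} on $x$ and on $y$ separately.

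For $v\mid N$ the condition forces $x|_{\rI_{\infty,v}}=0$, so $x$ is unramified outside $p$; for $y$ it is automatic, since $(K/\cO)(\chi^{-1})$ is ramified of prime-to-$p$ order at $v$ and hence $H^1(\rG_{\Q_{\infty,v}},(K/\cO)(\chi^{-1}))=0$, which moreover shows $y$ is unramified at $N$. At the prime $w\mid p$, using $A_{f,0}|_{\Q_{\infty,w}}\cong(K/\cO)^2$ and that $A''_{f,0}$ is the unramified quotient of $A_{f,0}$ by the diagonal line, the map $H^1(\Q_{\infty,w},A_{f,0})\to H^1(\rI_{\infty,w},A''_{f,0})$ becomes $(x_w,y_w)\mapsto (x_w-y_w)|_{\rI_{\infty,w}}$, so the local condition reads $(x-y)|_{\rI_{\infty,w}}=0$. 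The decisive point — the assertion in Section~\ref{sec:idea} that ``$x$ is always zero by Iwasawa theory'' — is that $x$, being unramified outside $p$, lies in $H^1(\rG_{\Q_{\infty},\Sigma_p},K/\cO)$, a group governed by the Iwasawa theory of the cyclotomic $\Z_p$-extension of $\Q$; invoking Iwasawa's vanishing theorem for $\Q$ (triviality of the $p$-part of the class group in every layer) together with the condition at $w$ forces $x=0$, and I would run this through the local--global argument of \cite[\S4]{greenberg99} and \cite[\S2]{greenberg-vatsal00}. With $x=0$ the condition at $w$ degenerates to ``$y$ unramified at $w$'' for $\Sel(\Q_{\infty},A_{f,0})$, and to ``$y$ locally trivial at $w$'', i.e.\ $p$ splits completely in the associated extension, for $\Sel^{\mathrm{st}}(\Q_{\infty},A_{f,0})$.

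It then remains to identify these groups with the Iwasawa modules of \eqref{eq:iwasawa_module}. Let $F$ be the cyclic extension of $\Q$ cut out by $\chi$, so $\Gal(F\Q_{\infty}/\Q_{\infty})$ has order $\ord(\chi)$, prime to $p$; inflation--restriction identifies $H^1(\rG_{\Q_{\infty},\Sigma_{Np}},(K/\cO)(\chi^{-1}))$ with the $\chi^{-1}$-eigenspace of $\Hom$ of the relevant abelian pro-$p$ Galois group over $F\Q_{\infty}$, and imposing ``unramified everywhere'' (resp.\ moreover ``$p$ split'') and matching this eigenspace with the one defined over $\Q(\zeta_{Np^{\infty}})$ gives $\Sel(\Q_{\infty},A_{f,0})\cong \Hom(X_{Np^{\infty}}^{(\chi^{-1})},K/\cO)$ and $\Sel^{\mathrm{st}}(\Q_{\infty},A_{f,0})\cong \Hom(X_{Np^{\infty},p}^{(\chi^{-1})},K/\cO)$; equivalently $\Sel(\Q_{\infty},A_{f,0})^{\vee}\cong X_{Np^{\infty}}^{(\chi^{-1})}$ and $\Sel^{\mathrm{st}}(\Q_{\infty},A_{f,0})^{\vee}\cong X_{Np^{\infty},p}^{(\chi^{-1})}$, which is the content of the proposition. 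Finally $\mu^{\alg}(A_{f,0})=\mu(\Char_{\Lambda_{\cO}}X_{Np^{\infty}}^{(\chi^{-1})})=0$ by Ferrero--Washington \cite{ferrero-washington}, and taking characteristic ideals gives \eqref{eq:sel_gp_weigh_one_nontwist} for $j=0$. The main obstacle is the analysis at $p$: because $f$ is non-$p$-distinguished, $A'_{f,0}$ is the diagonal line and the Greenberg condition genuinely couples $x$ and $y$, so the real work is the Iwasawa-theoretic argument that $x$ must vanish — after which the strict case differs only by bookkeeping of the stronger splitting condition at $w$.
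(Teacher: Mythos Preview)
Your overall strategy matches the paper's: split $A_{f,0}$ globally as $(K/\cO)\oplus(K/\cO)(\chi^{-1})$, write Selmer classes as pairs $(x,y)$, show $x=0$, and identify the remaining $y$-part with the claimed Iwasawa module. In fact you are more transparent than the paper about the condition at $p$: you correctly record that $A'_{f,0}$ is the diagonal line $\langle e_-+e_+\rangle$, so the Greenberg condition at $w$ is the coupled relation $(x-y)|_{\rI_{\infty,w}}=0$. The paper's ``product'' description of the Selmer group as \eqref{eq:sel_f0_1}$\times$\eqref{eq:sel_f0_2} only becomes literally correct \emph{after} one knows $x=0$; your formulation makes this explicit.

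There is, however, a genuine gap in your argument that $x=0$. You invoke ``triviality of the $p$-part of the class group in every layer'', i.e.\ the vanishing of the \emph{everywhere-unramified} module $X_{p^\infty}^{\Delta}$. That would force $x=0$ only if you already knew $x|_{\rI_{\infty,w}}=0$; but the Selmer condition at $w$ is coupled and gives merely $x|_{\rI_{\infty,w}}=y|_{\rI_{\infty,w}}$, over which you have no independent control, so the ``condition at $w$'' does not help. The references to \cite[\S4]{greenberg99} and \cite[\S2]{greenberg-vatsal00} do not supply this step either. What the paper actually uses is the stronger vanishing $\mathfrak{X}_{p^\infty}^{\Delta}=0$ (the unramified-\emph{outside}-$p$ module for the trivial character), which gives $H^1(\rG_{\Q_\infty,\Sigma_p},K/\cO)=0$ outright; then $x=0$ before any condition at $w$ is imposed. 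With this one correction, the rest of your argument --- the automatic local triviality of $y$ at $v\mid N$, the identification via inflation--restriction with $\Hom(X_{Np^\infty}^{(\chi^{-1})},K/\cO)$, the strict variant giving $X_{Np^\infty,p}^{(\chi^{-1})}$, and the appeal to Ferrero--Washington for $\mu^{\alg}=0$ --- is correct and agrees with the paper.
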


\begin{proof}
Recall from Section~\ref{sec:choice_of_lattice} that the action of $\rG_{\Q}$ on $A_{f,0}$ is given by the split sequence
\begin{equation}\label{eq:wt1_global_seq}
0\to K_f/\cO_f\to A_{f,0} \to K_f/\cO_f(\chi^{-1}) \to 0.
\end{equation}
As $\chi(p)=1$, the ordinary filtration \eqref{eq:ord_cond} is given by the $\rG_{\Q_p}$-split sequence
\begin{equation}\label{eq:wt1_local_seq}
0\to K_f/\cO_{f} \to A_{f,0} \to K_f/\cO_{f} \to 0.
\end{equation}
The split sequence \eqref{eq:wt1_global_seq} yields the following split exact sequence of cohomology groups
\begin{equation}\label{eq:split_Qinf_fil1}
0\to H^1(\rG_{\Q_{\infty},\Sigma_{Np}},K_f/\cO_f)\rightarrow H^1(\rG_{\Q_{\infty},\Sigma_{Np}},A_{f,0})\rightarrow H^1(\rG_{\Q_{\infty},\Sigma_{Np}},K_f/\cO_f(\chi^{-1}))\rightarrow 0
\end{equation}
From diagram chasing and the Selmer conditions, one can verify that the Selmer group $\Sel(\Q_{\infty},A_{f,0})$ is isomorphic to the product of
\begin{equation}\label{eq:sel_f0_1}
\ker(H^1(\rG_{\Q_{\infty},\Sigma_{Np}},K_f/\cO_f)\to \prod_{v|N} H^1(\rI_{\infty,v},K_f/\cO_f)) \times H^1(\rI_{\infty,w},K_f/\cO_f))
\end{equation}
and 
\begin{equation}\label{eq:sel_f0_2}
\ker(H^1(\Q_{\infty,\Sigma_{Np}},K_f/\cO_f(\chi^{-1}))\to \prod_{v|N} H^1(\rI_{\infty,v},K_f/\cO_f(\chi^{-1})\times H^1(\rI_{\infty,w},K_f/\cO_f)).    
\end{equation}
This would yield the assertion for $\Sel(\Q_{\infty},A_{f,0})$ since the Pontryagin dual of \eqref{eq:sel_f0_1} is contained in $\mathfrak{X}_{p^{\infty}}^{\Delta}=0$, where $\Delta=\Gal(\Q(\zeta_{p})/\Q)$, and the Pontryagin dual of \eqref{eq:sel_f0_2} is $X_{Np^{\infty}}^{(\chi^{-1})}$. By replacing $\rI_{\infty,w}$ by $\Q_{\infty,w}$ in the above discussion, the assertion for $\Sel^{\mathrm{st}}(\Q_{\infty},A_{f,0})$ follows from the same argument.
\end{proof}

Next, we compute the residual Selmer groups for weight $k\geq 2$ cusp forms $g$. 

\begin{prop}\label{res_sel_gp_trivial_twist}
One has $\dim_{\F} \Sel(\Q_{\infty}, A_{g,0}[\pi])= \dim_{\F} \Hom(X_{Np^{\infty}}^{(\chi^{-1})},\F)$. Moreover, one has $\mu^{\alg}(A_{g,0})=0$ and \eqref{eq:inv_higher_wt} for $j=0$ holds.
\end{prop}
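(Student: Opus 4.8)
The plan is to follow the three-step reduction outlined in Section~\ref{sec:idea}, specialized to $j=0$. The crucial observation is that, by Proposition~\ref{residual_repr_is_semisimple} and the choice of lattice in Section~\ref{sec:choice_of_lattice}, the residual representation $A_{g,0}[\pi]$ together with its ordinary $\rG_{\Q_p}$-filtration depends only on $\gh_{\theta,\gm}$ and not on $g$: it equals $H_{\theta,\gm}(1)/\gm\cong \F\oplus\F(\chi^{-1})$ with $\rG_{\Q_p}$-stable line spanned by $e_-+e_+$. In particular it agrees with the residual data attached to the weight one form $f$, so that $\Sel(\Q_\infty,A_{g,0}[\pi])=\Sel(\Q_\infty,A_{f,0}[\pi])$, and I can compute the left-hand side by transporting to the weight one situation.

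First I would apply the exact sequence \eqref{eq:residal_sel_and_usual_selmer} of Proposition~\ref{SES1} to $f$. Here $H^0(\Q_\infty,A_{f,0})$ is $\cO_f$-divisible: by Proposition~\ref{residual_repr_is_semisimple} the module $A_{f,0}$ splits globally as $K_f/\cO_f\oplus K_f/\cO_f(\chi^{-1})$, with $\rG_{\Q_\infty}$ acting trivially on the first factor and through $\chi^{-1}$ on the second, and $\chi^{-1}$ is nontrivial on $\rG_{\Q_\infty}$ because $\chi$ ramifies only at the primes dividing $N$ and $\Q_\infty\cap\Q(\mu_N)=\Q$. Thus $H^0(\Q_\infty,A_{f,0})/\pi=0$ and \eqref{eq:residal_sel_and_usual_selmer} collapses to an isomorphism $\Sel(\Q_\infty,A_{f,0}[\pi])\cong\Sel(\Q_\infty,A_{f,0})[\pi]$. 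By Proposition~\ref{comp_sel_wt1_nontwist}, $\Sel(\Q_\infty,A_{f,0})^\vee\cong X_{Np^\infty}^{(\chi^{-1})}$, so $\Sel(\Q_\infty,A_{f,0})[\pi]$ is Pontryagin dual to $X_{Np^\infty}^{(\chi^{-1})}/\pi$, which is finite by Ferrero--Washington and has $\F$-dimension $\dim_{\F}\Hom(X_{Np^\infty}^{(\chi^{-1})},\F)$. This yields the first assertion, $\dim_{\F}\Sel(\Q_\infty,A_{g,0}[\pi])=\dim_{\F}\Hom(X_{Np^\infty}^{(\chi^{-1})},\F)$. (Alternatively one can compute $\Sel(\Q_\infty,A_{g,0}[\pi])$ directly, decomposing a class along $\{e_-,e_+\}$ exactly as in the proof of Proposition~\ref{comp_sel_wt1_nontwist}, using that $\chi\bmod\pi$ is still primitive of conductor $N$ so the ramified local conditions at $v\mid N$ are automatic, and that the trivial-coefficient contribution vanishes as there, since $\mathfrak{X}_{p^\infty}^{\Delta}=0$.)

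Next I would read off the invariants. Since $\Sel(\Q_\infty,A_{g,0}[\pi])$ is finite, so is its quotient $\Sel(\Q_\infty,A_{g,0})[\pi]$ from \eqref{eq:residal_sel_and_usual_selmer}; hence $\mu^{\alg}(A_{g,0})=0$ by Corollary~\ref{induction_to_residual_sel}, which moreover gives $\lambda^{\alg}(A_{g,0})=\dim_{\F}\Sel(\Q_\infty,A_{g,0})[\pi]$. Combining \eqref{eq:residal_sel_and_usual_selmer} for $g$ with $\dim_{\F}H^0(\Q_\infty,A_{g,0})/\pi=1$ from \eqref{eq:dim_Qinfty_pts} gives $\dim_{\F}\Sel(\Q_\infty,A_{g,0})[\pi]=\dim_{\F}\Sel(\Q_\infty,A_{g,0}[\pi])-1$; it then remains to match this with $\lambda(\Char_{\Lambda_{\cO}}X_{Np^\infty}^{(\chi^{-1})})$, using $\mu(X_{Np^\infty}^{(\chi^{-1})})=0$ together with the exceptional zero at $\gp_1$ (the trivial zero of $L_p(s,\chi\omega_p)$ forced by $\chi(p)=1$, cf.\ \eqref{eq:classical_main_conj}), which accounts for the single-dimension gap. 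This proves \eqref{eq:inv_higher_wt} for $j=0$.

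The step I expect to be most delicate is this last bookkeeping: the comparison between $f$ and $g$ rests on $H^0(\Q_\infty,A_{f,0})$ being $\cO_f$-divisible while $H^0(\Q_\infty,A_{g,0})$ is only finite, and correctly accounting for the resulting shift against the $\lambda$-invariant of $X_{Np^\infty}^{(\chi^{-1})}$ at its exceptional prime. The global-splitting input and the divisibility statement both come from Proposition~\ref{residual_repr_is_semisimple}; without the Gorenstein hypothesis (so that only a GMA is available) this semisimplicity, and hence the entire argument, would be out of reach.
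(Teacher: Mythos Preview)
Your argument for the first assertion is correct and is a mild variant of the paper's: the paper computes $\Sel(\Q_\infty,A_{g,0}[\pi])$ directly by rerunning the proof of Proposition~\ref{comp_sel_wt1_nontwist} with $\F$-coefficients (via the split sequence \eqref{eq:split_Qinf_fil2}), whereas you pass through $f$ by identifying $A_{g,0}[\pi]=A_{f,0}[\pi]$, observing that $H^0(\Q_\infty,A_{f,0})=K_f/\cO_f$ is divisible so that \eqref{eq:residal_sel_and_usual_selmer} collapses for $f$, and then invoking Proposition~\ref{comp_sel_wt1_nontwist}. Both routes land on the same identification with $\Hom(X_{Np^\infty}^{(\chi^{-1})},\F)$, and you even note the paper's direct route as an alternative. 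The deduction of $\mu^{\alg}(A_{g,0})=0$ is identical to the paper's.

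The genuine soft spot is your final bookkeeping, and here the paper is equally terse. Both you and the paper arrive at
\[
\lambda^{\alg}(A_{g,0})=\dim_{\F}\Sel(\Q_\infty,A_{g,0})[\pi]=\dim_{\F}\Hom(X_{Np^\infty}^{(\chi^{-1})},\F)-1,
\]
and must match this with $\lambda(\Char_{\Lambda_\cO}X_{Np^\infty}^{(\chi^{-1})})$. Your proposed explanation via the trivial zero of $L_p(s,\chi\omega_p)$ does not do the job as written: since $\chi^{-1}$ is odd, $X_{Np^\infty}^{(\chi^{-1})}$ has $\mu=0$ by Ferrero--Washington and no nonzero finite $\Lambda$-submodules, hence is $\cO$-free of rank equal to its $\lambda$-invariant, so $\dim_{\F}\Hom(X_{Np^\infty}^{(\chi^{-1})},\F)=\lambda(X_{Np^\infty}^{(\chi^{-1})})$ on the nose. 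The exceptional zero forces $\lambda\geq 1$ but does not produce an extra $\F$-dimension in $\Hom(\,\cdot\,,\F)$. So the ``single-dimension gap'' you flag is real and is not resolved by the trivial-zero remark; the paper's one-line appeal to \eqref{eq:dim_Qinfty_pts} glosses over exactly the same point. You have correctly isolated the delicate step, but the justification you give for closing it does not go through.
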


\begin{proof}
By Proposition~\ref{residual_repr_is_semisimple}, one obtains the following split short exact sequence
\begin{equation}\label{eq:split_Qinf_fil2}
0\xrightarrow[]{} H^1(\rG_{\Q_{\infty},\Sigma_{Np}},\F)\rightarrow H^1(\rG_{\Q_{\infty},\Sigma_{Np}},A_{g,0}[\pi])\rightarrow H^1(\rG_{\Q_{\infty},\Sigma_{Np}},\F(\chi^{-1}))\rightarrow 0
\end{equation}
By the same proof of Proposition~\ref{comp_sel_wt1_nontwist}, we know the  $\Hom(\Sel(\Q_{\infty}, A_{g,0}[\pi],\F))$ is isomorphic to the $\F$-dual of
$$
\ker( H^1(\rG_{\Q_{\infty},\Sigma_{Np}},\F(\chi^{-1}))\rightarrow \prod_{v|N} H^1(\rI_{\infty,v}, \F(\chi^{-1})) \times H^1(\rI_{\infty,w},\F)).
$$
Hence, one obtains $\dim_{\F} \Sel(\Q_{\infty}, A_{g,0}[\pi])=\dim_{\F} \Hom(X_{Np^{\infty}}^{(\chi^{-1})},\F)$, which is finite. Thus, by Proposition~\ref{SES1}, the group $\Sel(\Q_{\infty}, A_{g,0})[\pi]$ is finite, and hence, one has $\mu^{\alg}(A_{g,0})=0$ by Corollary~\ref{induction_to_residual_sel}. Furthermore, by Corollary~\ref{induction_to_residual_sel} again,  \eqref{eq:inv_higher_wt} for $j=0$ follows from \eqref{eq:dim_Qinfty_pts}. 
\end{proof}
\subsection{Even twist}\label{sec:even_twist}
In this subsection, we assume that $j\leq p-1$ is a positive even integer.
We have the following split exact sequences
\begin{equation}\label{eq:split_Qinf_fil3}
0\xrightarrow[]{} H^1(\rG_{\Q_{\infty},\Sigma_{Np}},\F(\omega_p^j))\rightarrow H^1(\rG_{\Q_{\infty},\Sigma_{Np}},A_{g,j}[\pi])\rightarrow H^1(\rG_{\Q_{\infty},\Sigma_{Np}},\F(\chi^{-1}\omega_p^j))\rightarrow 0
\end{equation}
and
\begin{equation}
0\xrightarrow[]{} H^1(\rG_{\Q_{\infty,w}},\F(\omega_p^j))\rightarrow H^1(\rG_{\Q_{\infty,w}},A_{g,j}[\pi])\rightarrow H^1(\rG_{\Q_{\infty,w}},\F(\omega_p^j))\rightarrow 0
\end{equation}
Similar to the proof of the case $j=0$, we decompose the residual Selmer group into two parts
\[
\mathrm{Sel}(\Q_{\infty},A_{g,j}[\pi])\subset H^1(\rG_{\Q_{\infty},\Sigma_p},\F(\omega_p^j)) \oplus H^1(\rG_{\Q_{\infty},\Sigma_{p}},\F(\chi^{-1}\omega_p^j))
\]
and write elements in $\mathrm{Sel}(\Q_{\infty},A_{g,j}[\pi])$ as  $(x, y)$, where $x$ is in $H^1(\rG_{\Q_{\infty},\Sigma_p},\F(\omega_p^j))$ and $y$ is in $H^1(\rG_{\Q_{\infty},\Sigma_p},\F(\chi^{-1}\omega_p^j))$.
We have seen in Proposition~\ref{residual_repr_is_semisimple} that the basis of $A'_{g,j}[\pi]$ is $e_-+e_+$, which yields that the residual Selmer group is given by the condition
\begin{equation}\label{eq:decom_res_sel_gp}
\{(x,y): (x-y)|_{\rI_{p}}=0\}.
\end{equation}
Similarly, one can write elements in  $\mathrm{Sel}(\Q_{\infty},A_{f,j})$ as $(x,y)$ satisfying \eqref{eq:decom_res_sel_gp}, where $x$ is in $H^1(\rG_{\Q_{\infty},\Sigma_p},K_f/\cO_f(\omega_p^j))$ and $y$ is in $H^1(\rG_{\Q_{\infty},\Sigma_p},K_f/\cO_f(\chi^{-1}\omega_p^j))$. Note that for $\Omega=\F$ or $K_f/\cO_f$, one has
$$
H^1(\rG_{\Q_{\infty},\Sigma_p},\Omega(\omega_p^j)) \simeq 
\Hom(\mathfrak{X}_{p^{\infty}}^{(\omega_p^j)},\Omega)
$$
and
$$
H^1(\rG_{\Q_{\infty},\Sigma_{p}},\Omega(\chi^{-1}\omega_p^j))\simeq
\Hom(\mathfrak{X}_{Np^{\infty}}^{(\chi^{-1}\omega_p^j)},\Omega).
$$

\begin{prop}\label{computation_sel_with_trist}
Let $\Omega$ be $\F$ or $K_f/\cO_f$. Suppose that $j\leq p-1$ is a positive even integer. For any $x$ in $\Hom(\mathfrak{X}_{p^{\infty}}^{(\omega_p^j)},\Omega)$, there is a unique $\bar{y}$ in  $\Hom(\mathfrak{X}_{Np^{\infty}}^{(\chi^{-1}\omega_p^j)},\Omega)/\Hom(X_{Np^{\infty}}^{(\chi^{-1}\omega_p^j)},\Omega)$ such that $x-y=0$ on $\rI_p$, where y is any lifting of $\bar{y}$ in $\Hom(\mathfrak{X}_{Np^{\infty}}^{(\chi^{-1}\omega_p^j)},\Omega)$.
\end{prop}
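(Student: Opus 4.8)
The statement is the main ingredient in the even case of \eqref{eq:dim_res_sel_gp}, and the plan is to prove it by comparing, on the two summands of the residual Selmer group, the restriction-to-inertia map at $p$. Write $w$ for the unique place of $\Q_{\infty}$ above $p$. Using the identifications recalled just before the statement, $x$ is a class in $H^1(\rG_{\Q_{\infty},\Sigma_p},\Omega(\omega_p^j))$ and $y$ a class in $H^1(\rG_{\Q_{\infty},\Sigma_p},\Omega(\chi^{-1}\omega_p^j))$. Because $p\nmid N$ and $\chi(p)=1$, the character $\chi^{-1}$ is trivial on the whole decomposition group $\rG_{\Q_{\infty,w}}$; hence $\Omega(\omega_p^j)$ and $\Omega(\chi^{-1}\omega_p^j)$ are isomorphic as $\rG_{\Q_{\infty,w}}$-modules, which is what makes the condition $(x-y)|_{\rI_p}=0$ meaningful, both restrictions then lying in $H^1(\rI_{\infty,w},\Omega(\omega_p^j))$. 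Writing $\mathrm{res}_{\rI_p}$ for restriction to $\rI_p$ on each side, the proposition amounts to: (i) $\ker\bigl(\mathrm{res}_{\rI_p}\colon \Hom(\gX_{Np^{\infty}}^{(\chi^{-1}\omega_p^j)},\Omega)\to H^1(\rI_{\infty,w},\Omega(\omega_p^j))\bigr)=\Hom(X_{Np^{\infty}}^{(\chi^{-1}\omega_p^j)},\Omega)$; and (ii) $\mathrm{res}_{\rI_p}\bigl(\Hom(\gX_{p^{\infty}}^{(\omega_p^j)},\Omega)\bigr)\subseteq\mathrm{res}_{\rI_p}\bigl(\Hom(\gX_{Np^{\infty}}^{(\chi^{-1}\omega_p^j)},\Omega)\bigr)$ inside $H^1(\rI_{\infty,w},\Omega(\omega_p^j))$.

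Claim (i) and the uniqueness of $\bar y$ are the easy half. A class in $\Hom(\gX_{Np^{\infty}}^{(\chi^{-1}\omega_p^j)},\Omega)$ is unramified outside $p$ and is automatically unramified at every $v\mid N$: there $\chi^{-1}$ has order prime to $p$, so $H^1(\rI_{\infty,v},\Omega(\chi^{-1}\omega_p^j))=0$ by a computation with the tame and wild parts of inertia, in the spirit of the divisibility arguments of Lemma~\ref{inj_selcond_at_p}. Hence a class in $\Hom(\gX_{Np^{\infty}}^{(\chi^{-1}\omega_p^j)},\Omega)$ that also vanishes on $\rI_p$ is everywhere unramified, i.e.\ lies in $\Hom(X_{Np^{\infty}}^{(\chi^{-1}\omega_p^j)},\Omega)$; this gives (i), and shows that any two admissible lifts of $\bar y$ differ by an element of $\Hom(X_{Np^{\infty}}^{(\chi^{-1}\omega_p^j)},\Omega)$, so $\bar y$ is unique.

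For (ii) I would argue through local units. By local class field theory applied to the completion of $\Q(\zeta_{Np^{\infty}})$ at $p$, the inertia subgroup at $p$ of $\gX_{Np^{\infty}}^{(\chi^{-1}\omega_p^j)}$ is a quotient of the $(\chi^{-1}\omega_p^j)$-part of the semilocal units at $p$; since $\chi^{-1}$ is trivial on the decomposition group, Frobenius reciprocity identifies this part with the $\omega_p^j$-eigenspace $\mathcal{U}_{\infty}^{(\omega_p^j)}$ of the local units of $\Q_p(\mu_{p^{\infty}})$, which is free of rank one over $\Lambda_{\cO}$ by Iwasawa's theorem (we use $\omega_p^j\neq\omega_p$, automatic as $j$ is even). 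Moreover $\chi^{-1}\omega_p^j$ is odd (since $\chi$ is odd and $j$ is even), so the global units of $\Q(\zeta_{Np^{\infty}})$ have trivial $(\chi^{-1}\omega_p^j)$-part; no quotient is therefore taken, and one gets an exact sequence $0\to \mathcal{U}_{\infty}^{(\omega_p^j)}\to \gX_{Np^{\infty}}^{(\chi^{-1}\omega_p^j)}\to X_{Np^{\infty}}^{(\chi^{-1}\omega_p^j)}\to 0$ with free kernel. On the other side the inertia subgroup of $\gX_{p^{\infty}}^{(\omega_p^j)}$ is only a \emph{proper} quotient of the same $\mathcal{U}_{\infty}^{(\omega_p^j)}$ (now the global-units quotient is non-trivial, $\omega_p^j$ being even), so $\mathrm{res}_{\rI_p}(x)$ factors through $\mathcal{U}_{\infty}^{(\omega_p^j)}$, i.e.\ lies in the image of $\Hom(\mathcal{U}_{\infty}^{(\omega_p^j)},\Omega)\to H^1(\rI_{\infty,w},\Omega(\omega_p^j))$. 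It then suffices to show that $\Hom(\gX_{Np^{\infty}}^{(\chi^{-1}\omega_p^j)},\Omega)\to\Hom(\mathcal{U}_{\infty}^{(\omega_p^j)},\Omega)$ is surjective, equivalently that the connecting homomorphism into $\Ext^1_{\Lambda_{\cO}}(X_{Np^{\infty}}^{(\chi^{-1}\omega_p^j)},\Omega)$ vanishes on the classes in play; for $\Omega=K_f/\cO_f$ this is where I would combine the freeness of $\mathcal{U}_{\infty}^{(\omega_p^j)}$ with the absence of finite submodules in the relevant Iwasawa modules (as in Proposition~\ref{sel_no_fin_submod}), and the case $\Omega=\F$ then follows by reduction modulo $\pi$.

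The step I expect to be the main obstacle is precisely this last surjectivity: pinning down the extension $0\to\mathcal{U}_{\infty}^{(\omega_p^j)}\to\gX_{Np^{\infty}}^{(\chi^{-1}\omega_p^j)}\to X_{Np^{\infty}}^{(\chi^{-1}\omega_p^j)}\to 0$ well enough to see that the needed functionals extend requires a careful global-duality bookkeeping, which is cleanest to run at the finite layers $\Q_n$ and then pass to the limit, since local Tate duality at $p$ degenerates over $\Q_{\infty,w}$. Two minor points also need separate care: the boundary value $j=p-1$, where $\omega_p^j$ is trivial and the unramified local cohomology at $p$ no longer vanishes, so one must keep track of the unramified part of the decomposition group as well; and the reduction from $\Omega=\F$ to $\Omega=K_f/\cO_f$, which goes as in Lemmas~\ref{inj_selcond_at_p} and \ref{cond_show_surj_gamma}.
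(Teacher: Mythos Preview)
Your approach is essentially the paper's: both arguments reduce to the exact sequence coming from class field theory,
\[
0 \longrightarrow \mathcal{U}^{(\chi^{-1}\omega_p^j)} \longrightarrow \mathfrak{X}_{Np^{\infty}}^{(\chi^{-1}\omega_p^j)} \longrightarrow X_{Np^{\infty}}^{(\chi^{-1}\omega_p^j)} \longrightarrow 0,
\]
together with the identification $\mathcal{U}^{(\chi^{-1}\omega_p^j)} \simeq \Gal(K_{ab}/\Q_p(\mu_{p^{\infty}}))^{(\omega_p^j)}$ via $\chi(p)=1$, and the vanishing of the global-units contribution since $\chi^{-1}\omega_p^j$ is odd. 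Your claim (i) and the existence/uniqueness dichotomy for $\bar y$ match the paper's opening two lines.

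Where you diverge is precisely at your ``main obstacle'', and there the paper's route is much shorter than the finite-level global-duality argument you sketch. Since $\chi^{-1}\omega_p^j$ is odd, the module $X_{Np^{\infty}}^{(\chi^{-1}\omega_p^j)}$ has $\mu$-invariant zero by Ferrero--Washington and has no nonzero finite $\Lambda$-submodule \cite[Proposition~2.5]{greenberg01}; it is therefore free over $\Z_p$. Hence $\Ext^1_{\Z_p}\bigl(X_{Np^{\infty}}^{(\chi^{-1}\omega_p^j)},\Omega\bigr)=0$ for both $\Omega=\F$ and $\Omega=K_f/\cO_f$, and the surjectivity of $\Hom(\mathfrak{X}_{Np^{\infty}}^{(\chi^{-1}\omega_p^j)},\Omega)\to\Hom(\mathcal{U}^{(\omega_p^j)},\Omega)$ drops out of the long exact sequence in one line. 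This treats both choices of $\Omega$ uniformly, so no reduction from $K_f/\cO_f$ to $\F$ is needed (and for $\Omega=K_f/\cO_f$ alone, injectivity of $\Omega$ over $\cO_f$ already suffices, so your emphasis was inverted). Your two minor worries are also non-issues: the value $j=p-1$ gives $\omega_p^j=\mathbbm{1}$, i.e.\ the case $j=0$ already handled in the previous subsection; and your detour through $H^1(\rI_{\infty,v},\Omega(\chi^{-1}\omega_p^j))$ for $v\mid N$ in claim~(i) is unnecessary, since $\mathfrak{X}_{Np^{\infty}}$ is by definition unramified outside $p$.
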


\begin{proof}
If $x|_{\rI_p}=0$, then $y|_{\rI_p}$ must be $0$ as well and hence, one has $y\in \Hom(X_{Np^{\infty}}^{(\chi^{-1}\omega_p^j)},\Omega)$. Now assume $x|_{\rI_p}\neq0$. Then, $x\neq 0\in \Hom(\Gal(K_{ab}/\Q_p(\mu_{p^{\infty}}))^{(\omega_p^j)},\Omega)$, where $K_{ab}$ be the maximal abelian pro-$p$ extension of $\Q_p(\mu_{p^{\infty}})$. We claim that one has a surjective homomorphism of abelian groups
$$
\Gal(K_{ab}/\Q_p(\mu_{p^{\infty}}))^{(\omega_p^j)} \surj \Gal(M_{\infty}/L_{\infty})^{(\chi^{-1}\omega_p^j)}
$$
that is an isomorphism if $j$ is even.
The short exact sequence
$$
0\to \Gal(M_{\infty}/L_{\infty})\to \mathfrak{X}_{Np^{\infty}} \to X_{Np^{\infty}} \to 0
$$
induces a surjective homomorphism
\begin{equation}\label{eq:exist_bar_y}
\Hom(\mathfrak{X}_{Np^{\infty}}^{(\chi^{-1}\omega_p^j)},\Omega)/\Hom(X_{Np^{\infty}}^{(\chi^{-1}\omega_p^j)},\Omega)\surj \Hom(\Gal(M_{\infty}/L_{\infty})^{(\chi^{-1}\omega_p^j)},\Omega),
\end{equation}
which is an isormophism if $j$ is even since in this case both $\mathfrak{X}_{Np^{\infty}}^{(\chi^{-1}\omega_p^j)}$ and $X_{Np^{\infty}}^{(\chi^{-1}\omega_p^j)}$ are free $\Z_p$-modules (their $\mu$-invariants are zeros \cite{ferrero-washington} and they do not have any finite submodules \cite[Proposition~2.5]{greenberg01}). Therefore, the claim can yield the desired assertions.

To prove the claim, let $\Q_{\chi}$ denote the field extension of $\Q$ cut out by the character $\chi$, and let $F_{n}=\Q_{\chi}(\mu_{p^{n+1}})$ for all $n\in \Z_{\geq 0}$. There are $d:=[F_0:\Q]$-distinct primes of $F_0$ above $p$. For each prime $\gp$ of $F_0$ above $p$ and for each $n\in \Z_{\geq 1}$, denote by $\mathfrak{p}_n$ the unique prime of $F_n$ above $\gp$. Let $U_{F_n,\mathfrak{p}_n}$ denote the group of principal units in the completion $F_{n,\mathfrak{p}_n}$ of $F_{n}$ at $\gp_n$. Put $\mathcal{U}:=\varprojlim \prod_{\mathfrak{p}\mid p}U_{F_n,\mathfrak{p}_n}$. By \cite[Corollary~13.6]{washington-GTM83}, we have 
\[
\Gal(M_{\infty}/L_{\infty})^{(\chi^{-1}\omega_p^j)}=(\mathcal{U}/\bar{E})^{(\chi^{-1}\omega_p^j)},
\]
which is non-trivial as $\chi(p)=1$.
Here $\bar{E}$ is the closure of the global units $E$ in $\mathcal{U}$. Finally, to prove the claim, one has isomorphisms 
$$
\Gal(K_{ab}/\Q_p(\mu_{p^{\infty}}))^{(\omega_p^j)}
\simeq \mathcal{U}^{(\chi^{-1}\omega_p^j)}
\surj (\mathcal{U}/\bar{E})^{(\chi^{-1}\omega_p^j)},
$$ 
where the first isomorphism is obtained by Iwasawa theory of local fields \cite[Proposition~4.12]{iwasawa86}. Moreover, the second map is an isomorphism if $j$ is even since $\chi\omega_p^j$ is an odd character. Thus, the claim follows.
\end{proof}

\begin{cor}
Suppose $j\neq 0$ is an even integer. Then the $\Lambda$-module $\Sel(\Q_{\infty},A_{f,0})^{\vee}$ is torsion with $\mu^{\alg}(A_{f,j})=0$ and \eqref{eq:sel_gp_weigh_one_nontwist} holds. Moreover, one has $\mu^{\alg}(A_{g,j})=0$ and  \eqref{eq:inv_higher_wt} holds.
\end{cor}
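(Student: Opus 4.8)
The plan is to derive the corollary purely by assembly: combine Proposition~\ref{computation_sel_with_trist}, applied once with $\Omega=K_f/\cO_f$ and once with $\Omega=\F$, with the structural facts already in hand (Proposition~\ref{SES1}, Corollary~\ref{induction_to_residual_sel}, Ferrero--Washington, and the absence of finite $\Lambda$-submodules used in the proof of Proposition~\ref{computation_sel_with_trist}). No new geometric or cohomological input is required.

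First I would treat the weight one form $f$. Recall from the discussion preceding Proposition~\ref{computation_sel_with_trist} that $\Sel(\Q_{\infty},A_{f,j})$ consists of pairs $(x,y)$ with $x\in H^1(\rG_{\Q_{\infty},\Sigma_p},K_f/\cO_f(\omega_p^j))\simeq\Hom(\mathfrak{X}_{p^{\infty}}^{(\omega_p^j)},K_f/\cO_f)$ and $y\in H^1(\rG_{\Q_{\infty},\Sigma_p},K_f/\cO_f(\chi^{-1}\omega_p^j))\simeq\Hom(\mathfrak{X}_{Np^{\infty}}^{(\chi^{-1}\omega_p^j)},K_f/\cO_f)$ subject to \eqref{eq:decom_res_sel_gp}. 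Proposition~\ref{computation_sel_with_trist} with $\Omega=K_f/\cO_f$ says precisely that the projection $(x,y)\mapsto x$ is surjective with kernel $\{0\}\times\Hom(X_{Np^{\infty}}^{(\chi^{-1}\omega_p^j)},K_f/\cO_f)$ (when $x=0$ the unique admissible class $\bar y$ is $0$, so the admissible $y$'s are exactly the everywhere unramified ones). This yields a short exact sequence of $\Lambda_\cO$-modules $0\to\Hom(X_{Np^{\infty}}^{(\chi^{-1}\omega_p^j)},K_f/\cO_f)\to\Sel(\Q_{\infty},A_{f,j})\to\Hom(\mathfrak{X}_{p^{\infty}}^{(\omega_p^j)},K_f/\cO_f)\to 0$. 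Taking Pontryagin duals and using $\Hom(M,K_f/\cO_f)^{\vee}\simeq M$ for finitely generated $\Lambda_\cO$-modules $M$, I obtain $0\to\mathfrak{X}_{p^{\infty}}^{(\omega_p^j)}\to\Sel(\Q_{\infty},A_{f,j})^{\vee}\to X_{Np^{\infty}}^{(\chi^{-1}\omega_p^j)}\to 0$; multiplicativity of characteristic ideals over short exact sequences gives \eqref{eq:sel_gp_weigh_one_nontwist} for $j\neq 0$, while additivity of $\mu$-invariants together with $\mu(\mathfrak{X}_{p^{\infty}}^{(\omega_p^j)})=\mu(X_{Np^{\infty}}^{(\chi^{-1}\omega_p^j)})=0$ (Ferrero--Washington \cite{ferrero-washington}) gives $\mu^{\alg}(A_{f,j})=0$.

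For the higher weight forms $g$, the same argument with $\Omega=\F$ produces the short exact sequence of $\F$-vector spaces $0\to\Hom(X_{Np^{\infty}}^{(\chi^{-1}\omega_p^j)},\F)\to\Sel(\Q_{\infty},A_{g,j}[\pi])\to\Hom(\mathfrak{X}_{p^{\infty}}^{(\omega_p^j)},\F)\to 0$, which is \eqref{eq:dim_res_sel_gp} in the case $j\neq 0$ and in particular shows $\Sel(\Q_{\infty},A_{g,j}[\pi])$ is finite. Plugging this into the exact sequence \eqref{eq:residal_sel_and_usual_selmer} of Proposition~\ref{SES1} and using $H^0(\Q_{\infty},A_{g,j})/\pi=0$ for $j\neq 0$ by \eqref{eq:dim_Qinfty_pts}, I get $\Sel(\Q_{\infty},A_{g,j}[\pi])\simeq\Sel(\Q_{\infty},A_{g,j})[\pi]$, so the latter is finite; Corollary~\ref{induction_to_residual_sel} then yields $\mu^{\alg}(A_{g,j})=0$ and $\lambda^{\alg}(A_{g,j})=\dim_{\F}\Sel(\Q_{\infty},A_{g,j})[\pi]$. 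It remains to note that $\mathfrak{X}_{p^{\infty}}^{(\omega_p^j)}$ and $X_{Np^{\infty}}^{(\chi^{-1}\omega_p^j)}$, having vanishing $\mu$-invariant and no nonzero finite $\Lambda$-submodules, are free $\cO$-modules whose ranks equal the $\lambda$-invariants of their characteristic polynomials, so $\dim_{\F}\Hom(M,\F)=\lambda(\Char_{\Lambda_\cO}M)$ for each; combined with the displayed sequence this gives \eqref{eq:inv_higher_wt}.

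The steps above are diagram chases and Pontryagin-duality bookkeeping; the one point requiring care — and the only plausible obstacle — is to check that dualizing the cohomological descriptions does not introduce a spurious involution on $\Lambda_\cO$, so that the eigenspaces $\mathfrak{X}_{p^{\infty}}^{(\omega_p^j)}$ and $X_{Np^{\infty}}^{(\chi^{-1}\omega_p^j)}$ reappear verbatim as in \eqref{eq:sel_gp_weigh_one_nontwist}, and that the identification of $\F$-dimensions of the $\Hom$-groups with $\cO$-ranks is insensitive to the residue field; both follow once one records that the local and global identifications used before Proposition~\ref{computation_sel_with_trist} are $\cO$-linear and $\Gamma$-equivariant. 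All the genuinely hard content — semisimplicity of $\bar\rho$, the local cup-product computation, absence of finite submodules in the Selmer dual, and the structure of $\mathcal{U}/\bar E$ — is already available, so I expect no substantive new difficulty.
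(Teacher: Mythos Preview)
Your proposal is correct and follows essentially the same approach as the paper: apply Proposition~\ref{computation_sel_with_trist} with $\Omega=K_f/\cO_f$ for $f$ and with $\Omega=\F$ for $g$, then feed the latter through Proposition~\ref{SES1} and Corollary~\ref{induction_to_residual_sel}. The paper is terser---it simply says the $f$-case ``follows directly'' and the $g$-case follows ``from the same proof of Proposition~\ref{res_sel_gp_trivial_twist}''---whereas you spell out the short exact sequences, the Pontryagin-dual bookkeeping, and the identification $\dim_{\F}\Hom(M,\F)=\lambda(\Char_{\Lambda_\cO}M)$ explicitly; but the logical skeleton is identical.
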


\begin{proof}
The assertion for $A_{f,j}$ follows from Proposition~\ref{computation_sel_with_trist} directly by taking $\Omega=K_f/\cO_f$. Also, Proposition~\ref{computation_sel_with_trist} for $\Omega=\F$ yields 
\[
\begin{split}
\dim_{\F}\mathrm{Sel}(\Q_{\infty},A_{g,i}[\pi])
=\dim_{\F}(\Hom(\mathfrak{X}_{p^{\infty}}^{(\omega_p^j)},\F))+\dim_{\F}(\Hom(X_{Np^{\infty}}^{(\chi^{-1}\omega_p^j)},\F))<\infty.
\end{split}
\]
Then, the assertion for $A_{g,j}$ follows from the same proof of Proposition~\ref{res_sel_gp_trivial_twist}.
\end{proof}

To end this section, we make a remark when $j$ is an odd integer.

\begin{remark}\label{j_odd}
Suppose $j\leq p-1$ is a positive odd integer.
By the same proof of Theorem~\ref{computation_sel_with_trist}, one can show a similar result but $\bar{y}$ is not unique since the surjective homomorphism \eqref{eq:exist_bar_y} is not necessary injective. Note that in this case, Greenberg's conjecture asserts that $X_{Np^{\infty}}^{(\chi^{-1}\omega_p^j)}$ is finite. In addition, it is known that $\mathfrak{X}_{p^{\infty}}^{(\omega_p^j)}$ has $\Lambda$-free part and hence, its $\F$-dual is not finite. As a consequence, one sees that the residual Selmer group $\mathrm{Sel}(\Q_{\infty},A_{g,i}[\pi])$ is infinity and hence, $\mu^{\alg}(A_{g,j})$ is nonzero by Corollary~\ref{induction_to_residual_sel} and Proposition~\ref{SES1}.
\end{remark}

\section{Examples}\label{sec:examples}
We verify that for $(N,p)=(3,7)$ and $(11,3)$, 
the universal $p$-ordinary cuspidal Hecke algebra $\gh_{\chi\omega_p,\gm}$ is free of rank one as $\Lambda:=\Z_p\lsem X \rsem$-module and hence is Gorenstein, where the character $\chi$ is the unique quadratic character corresponding to the field extension $\Q(\sqrt{-N})$ of $\Q$. Note that for these examples, $p$ splits completely in $\Q(\sqrt{-N})$ and hence, one has $\chi(p)=1$.

Our method is as follows. Let $S(Np,\chi\omega_p;\Lambda)_{\gm}^{\ord}$ denote the localization at $\gm$ of the space of $p$-ordinary cuapidal families of tame level $N$ and character $\chi\omega_p$. Our convention is that the weight $k$ specialization of $S(Np,\chi\omega_p;\Lambda)^{\ord}$ is the space of weight $k$ ordinary cusp forms of level $Np$ and character $\chi\omega_p^{k-1}$. 
Since the character $\chi$ cuts out the imaginary quadratic field $\Q(\sqrt{-N})$, the weight one Eisenstein series $E_1(\mathbbm{1},\chi)$ coincides with the weight one theta series $\theta_{\mathbbm{1}}$ attached to the trivial character $\mathbbm{1}$. As before, we denote by $f$ the $p$-stabilization of $E_1(\mathbbm{1},\chi)$. By \cite[Theorem~A(i)]{BDP}, the CM theta family $\Theta_{\mathbbm{1}}$ defined in \cite[p.~235, (2a)]{hida} is the unique cuspidal family passing through $f$. 
By Hida's control theory \cite[p.~215, Theorem~3]{hida}, the weight $k:=p$ specialization of $S(Np,\chi\omega_p;\Lambda)^{\ord}$ is isomorphic to the space $S_k(\Gamma_1(N)\cap \Gamma_0(p),\chi)^{\ord}$ of $p$-ordinary cusp form of level $\Gamma_1(N)\cap \Gamma_0(p)$, weight $k$, and character $\chi$ and hence, isomorphic, induced by the ordinary $p$-stabilization, to the space $S_k(\Gamma_1(N),\chi)$ of cusp forms of level $\Gamma_1(N)$, weight $k$, and character $\chi$ by \cite[Theorem~4.6.17]{miyake} (or see \cite[Section~3]{hida85}). Moreover, we verify that the Eisenstein component of the former space is free of rank one as $\Z_p$-module by checking the database LMFDB \cite{lmfdb} that the latter space is free of rank one as $\Z_p$-module consisting of exactly one CM form. Hence, the space $S(Np,\chi\omega_p;\Lambda)_{\gm}^{\ord}$ is free of rank one as $\Lambda$-module and therefore, so is $\gh_{\chi\omega_p,\gm}$ by using the Hecke duality.

\bibliography{biblio}
\bibliographystyle{alpha}
\end{document}